\theoremstyle{definition}
\newtheorem*{definition*}{Definition}
\newtheorem{example}{Example}
\newtheorem{counterexample}{Counterexample}
\newtheorem*{example*}{Example}
\newtheorem{observation}{Observation}
\newtheorem*{observation*}{Observation}
\DeclareMathOperator{\stab}{Stab}
\theoremstyle{plain}
\newtheorem{lemma}{Lemma}
\newtheorem*{lemma*}{Lemma}
\newtheorem{proposition}{Proposition}
\newtheorem*{proposition*}{Proposition}
\newtheorem{corollary}{Corollary}
\newtheorem*{corollary*}{Corollary}
\newtheorem{theorem}{Theorem}
\newtheorem*{theorem*}{Theorem}
\newcommand{\on}{\operatorname}
\newcommand{\Fer}{\operatorname{Fer}}
\newcommand{\fer}{{\rm Fer}}
\newcommand{\Aut}{\on{Aut}}
\newcommand{\Sym}{\on{Sym}}
\newcommand{\id}{\on{id}}
\newcommand{\E}{\mathcal{E}}
\title{Stem-Symmetry, Comb Products, and their Relation to Amoeba Graphs}
\author{
	Jillian Eddy$^\dagger$ 
	\and Ryan Pesak$^\dagger$ 
        \and Daniel Qin$^\dagger$ 
        \and Denae Ventura$^\ddagger \footnote{Corresponding author. Email dventuraarre@mtholyoke.edu.}$ 
	\\ \\ \\
	$^\dagger$ Dept. of Mathematics, University of California at Davis
\\
    $^\ddagger$ Dept. of Mathematics and Statistics, Mount Holyoke College}
\begin{document}

\maketitle

\begin{abstract}
Local and global amoebas are families of labeled graphs that satisfy interpolation properties on a fixed vertex set. A labeled graph $G$ on $n$ vertices is a \emph{local amoeba (resp. global amoeba)} if there exists a sequence of feasible edge-replacements between any two labelled embeddings of $G$ into $K_n$ (resp. $K_{n+1}$). Here, a feasible edge-replacement removes an edge and reinserts it so that the resulting graph is isomorphic to $G$; the induced relabeling yields a class of permutations of the label set. Motivated by classical group theoretic ideas, we introduce the \emph{hang group}, a new invariant that can encode how local amoebas embed into larger ones. Using this framework, we identify necessary and sufficient conditions connecting stem-symmetric and hang-symmetric graphs with local and global amoebas. In particular, we show how hang-symmetry and stem-symmetry conditions propagate under the addition of leaves and isolated vertices, in turn yielding constructive criteria for both local and global amoebas. Finally, via wreath products, we provide four sets of sufficient conditions, one for each property, guaranteeing when the comb product is a local amoeba, a global amoeba, stem-symmetric, or hang-symmetric. These results strengthen and generalize existing constructions of local and global amoebas.
\end{abstract}

\section{Introduction}

Amoeba graphs were first introduced in \cite{caro2021unavoidable} as examples of balanceable graphs, with a purely combinatorial definition. Later, \cite{caro2023graphs} provided an equivalent definition in group-theoretical terms by leveraging the notion of feasible edge-replacements and classifying amoebas into local and global types based on their interpolation properties. This group-theoretical perspective is the focus of our work.

Throughout, we work with graphs whose vertices are labeled. An edge-replacement of a labeled graph $G$ is a local operation that removes an edge $e \in E(G)$ and replaces it with another edge $e' \in E(\overline{G}) \cup \{e\}$. Such a replacement is feasible if the resulting graph is isomorphic to $G$ (respecting labels). A graph $G$ on $n$ vertices is a local amoeba if every labeled copy of $G$ in $K_n$ can be reached from $G$ by a sequence of feasible edge-replacements. A graph $G$ is a global amoeba if $G \cup K_1$ is a local amoeba.

Local and global amoebas have been investigated through various lenses. In \cite{caro2021unavoidable}, global amoebas were shown to be balanceable, and bipartite global amoebas to be omnitonal, a strengthening of balanceability. The group-theoretical study in \cite{caro2023graphs} provides concrete examples of both local and global amoebas, including a recursive family $\mathcal{T}$ of global amoebas known as Fibonacci-type trees. These and other families were later shown to be local amoebas via a recursive construction developed in \cite{eslava2023new}.

We are particularly inspired by the recursive construction of global amoebas in \cite{hansberg2021recursive}, which serves as a starting point for several of our results. Building on their ideas, we investigate how the comb product interacts with the groups governing  amoeba and adjacent properties. In the following subsection, we review the necessary group-theoretic definitions and preliminaries related to local and global amoebas.

\subsection{Preliminaries}
For any finite set $S$, let $\Sym(S)$ be the set of bijections $S \to S$; equivalently, the set of permutations on $S$. Recall that $\Sym(S)$ is a group isomorphic to $S_{|S|}$, the symmetric group on $|S|$ elements. In this work, every graph $G$ is equipped with a bijective labeling $\lambda:V(G)\to X$ on their vertex set such that $v_x = \lambda ^{-1}(x)$ for each $x\in X$. To avoid notational clutter, we at times use the vertex and its corresponding label interchangeably when the labeling is clear from context. Let $L_G=\{ij \mid v_iv_j\in E(G)\}$ be the set of edge labels of $E(G)$ where there is no distinction between $ij$ and $ji$. For every $\sigma\in \Sym(X)$, we define an embedding $G_\sigma$ which has vertex set $V(G)$ and edge set $$E(G_\sigma)= \{v_iv_j \mid \sigma(i)\sigma(j)\in L_G \}.$$ Note that $G_\sigma\cong G$ for all $\sigma\in \Sym(X).$ 

If $G_1$ and $G_2$ are graphs, we say that $G_1=G_2$ if $V(G_1)=V(G_2)$ and $E(G_1)=E(G_2)$ (if they are labeled graphs, their labelings are also the same). Note that given an unlabeled copy $G'$ of $G$, there are $|\Aut(G)|$ different ways of labeling $G'$ so that the labels correspond to those on $G$. In particular, this implies that the set $A_G=\{\sigma \in S_n \mid G_\sigma = G' \}$ has $|\Aut(G)|$ elements, and furthermore, $A_G\cong \Aut(G)$.

The use of labels on the vertices is important to keep track of the role each vertex and edge has in copies of $G$. In $G_\sigma$, the vertex labeled $i$ corresponds to the copy of vertex $v_i$ of $G$, while the edge labeled $ij$ represents the copy of the edge $v_i v_j \in E(G)$. In particular, $L_{G_{\sigma}}= L_G$ for all $\sigma \in \Sym(X)$. See \Cref{fig:example-comparison} for an example.

\begin{figure}[H]
    \centering
    \begin{subfigure}[b]{0.48\textwidth}
        \centering
        \begin{tikzpicture}
            \node[shape=circle,draw=black,inner sep=2pt,font=\small] (3) at (0,0) {$v_3$};
            \node[shape=circle,draw=black,inner sep=2pt,font=\small] (2) at (1.299,-0.75) {$v_2$};
            \node[shape=circle,draw=black,inner sep=2pt,font=\small] (1) at (1.299,0.75) {$v_1$};
            \node[shape=circle,draw=black,inner sep=2pt,font=\small] (4) at (-1.5,0) {$v_4$};
            \node[shape=circle,draw=black,inner sep=2pt,font=\small] (5) at (-3,0) {$v_5$};

            \node[blue,font=\small] (a) at (0,.7) {$3$};
            \node[blue,font=\small] (b) at (1.299,-.1) {$2$};
            \node[blue,font=\small] (c) at (1.299,1.4) {$1$};
            \node[blue,font=\small] (d) at (-1.5,.7) {$4$};
            \node[blue,font=\small] (e) at (-3,.7) {$5$};

            \path [] (1) edge (3);
            \path [] (3) edge (2);
            \path [] (3) edge (4);
            \path [] (4) edge (5);
        \end{tikzpicture}
        \caption{$G = G_{id}$ with $\lambda(v_i) = i$.}
        \label{fig1:example}
    \end{subfigure}
    \hfill
    \begin{subfigure}[b]{0.48\textwidth}
        \centering
        \begin{tikzpicture}
            \node[shape=circle,draw=black,inner sep=2pt,font=\small] (3) at (0,0) {$v_3$};
            \node[shape=circle,draw=black,inner sep=2pt,font=\small] (2) at (1.299,-0.75) {$v_2$};
            \node[shape=circle,draw=black,inner sep=2pt,font=\small] (1) at (1.299,0.75) {$v_1$};
            \node[shape=circle,draw=black,inner sep=2pt,font=\small] (4) at (-1.5,0) {$v_4$};
            \node[shape=circle,draw=black,inner sep=2pt,font=\small] (5) at (-3,0) {$v_5$};

            \node[blue,font=\small] (a) at (0,.7) {$4$};
            \node[blue,font=\small] (b) at (1.299,-.1) {$2$};
            \node[blue,font=\small] (c) at (1.299,1.4) {$1$};
            \node[blue,font=\small] (d) at (-1.5,.7) {$5$};
            \node[blue,font=\small] (e) at (-3,.7) {$3$};

            \draw[] (5) to [bend right=30] (3);
            \draw[] (5) to [bend left=30] (1);
            \draw[] (5) to [bend right=30] (2);
            \draw[] (3) to [bend right=20] (4);
        \end{tikzpicture}
        \caption{$G_{(345)}$ with $\sigma = (345)$.}
        \label{fig2:example}
    \end{subfigure}
    \caption{Graphs $G$ and $G_{(345)}$ with labels in blue and $L_G = \{13,23,34,45\}$. Notice that $L_G = L_{G_{(345)}}$.}
    \label{fig:example-comparison}
\end{figure}
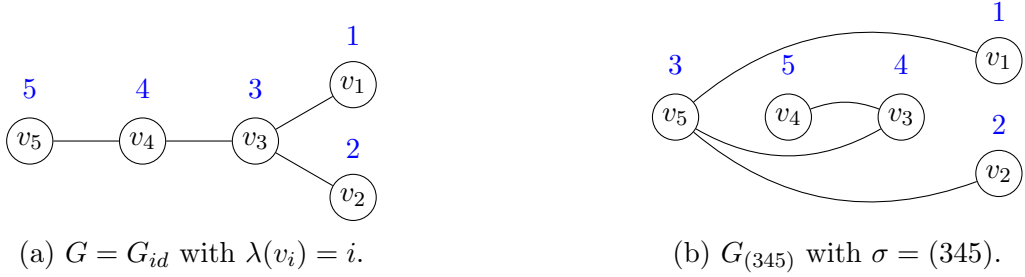

We now define edge-replacements in terms of elements in $L_G$. Given $G$ a labeled simple graph, we denote by $rs\to kl$ (or $e\to e'$ if $rs$ and $kl$ are the labels of edges $e$ and $e'$, respectively) the edge-replacement corresponding to the removal of the edge $e\in E(G)$ with labels $rs$ and the addition of the edge $e'\in E(\overline{G})\cup e$ with labels $kl$. An edge-replacement is said to be \emph{feasible} if the resulting graph $G-v_r v_s + v_kv_l$ is isomorphic to $G$. We denote by $\emptyset\to\emptyset$ the \emph{neutral edge-replacement}, where no edge is replaced. Let the set $$R_G = \{ rs\to kl \mid G-v_r v_s + v_kv_l \cong G, rs\neq kl \} \cup \{ \emptyset\to\emptyset \}$$

\noindent be the set of all feasible edge-replacements of $G$ given by their labels and let $R^* = R_G \setminus \{\emptyset \to \emptyset\}$. Now, for $e \to e' \in R^{*}_G$, define
\[
    \Fer _G (e\to e') = \{\sigma \in \Sym(X) \mid G - e_1 + e_2 = G_\sigma\}. 
\]
Note that in regard to the neutral edge-replacement, $\Fer _G (\emptyset\to \emptyset)=A_G$. This implies that if $e \to e'$ is feasible then $\Fer_G(e \to e')$ is nonempty. Importantly, the elements of $\Fer_G(e\to e')$ represent the $|\Aut(G)|$ different copies of $G$ that can be obtained via the feasible edge-replacement $e\to e' \in R_G$, which implies the following observation.

\begin{observation}~\label{lem:cosets}
Let $G$ be a graph with a label set $X$ and labeling $\lambda: V(G) \to X$. If $e_1 \to e_2$ is a feasible edge-replacement, then $\Fer_G(e_1 \to e_2)$ is a left coset of $\Aut(G)$. 
\end{observation}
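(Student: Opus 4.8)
The plan is to recognize the statement as an instance of the orbit--stabilizer correspondence for the natural action of $\Sym(X)$ on labelled graphs given by $\sigma\mapsto G_\sigma$. Under this action the stabilizer of $G$ itself is exactly $\Fer_G(\emptyset\to\emptyset)=A_G$, which the preliminaries already identify as a subgroup of $\Sym(X)$ isomorphic to $\Aut(G)$. The goal then becomes: show that whenever the fibre $\Fer_G(e_1\to e_2)=\{\sigma : G_\sigma = G - e_1 + e_2\}$ is nonempty, it is a single coset of this stabilizer.

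First I would record two ingredients. (i) \emph{Nonemptiness.} Since $e_1\to e_2$ is feasible, $H := G - e_1 + e_2 \cong G$; as already observed in the excerpt this guarantees $\Fer_G(e_1\to e_2)\neq\emptyset$, and concretely any graph isomorphism $H\to G$ translates, through the labelling $\lambda$, into a permutation $\tau\in\Sym(X)$ with $G_\tau = H$. Fix such a $\tau$. (ii) \emph{Compatibility with composition.} A direct computation from $E(G_\sigma)=\{v_iv_j : \sigma(i)\sigma(j)\in L_G\}$ shows that $\sigma\mapsto G_\sigma$ is compatible with composition of permutations, so that it is a genuine group action and, in particular, $G_\alpha = G_\beta$ holds precisely when $\alpha$ and $\beta$ differ by an element of $A_G$ (equivalently $\tau^{-1}\sigma\in A_G$).

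With these in hand the conclusion is the standard coset computation: for $\sigma\in\Sym(X)$ we have $\sigma\in\Fer_G(e_1\to e_2)$ iff $G_\sigma = H = G_\tau$, iff $\tau^{-1}\sigma\in A_G$, iff $\sigma\in\tau A_G$. Hence $\Fer_G(e_1\to e_2)=\tau A_G$, a left coset of $A_G\cong\Aut(G)$. Both inclusions of this chain are routine once the action is set up.

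The step I expect to be the main obstacle --- and the only place real care is needed --- is ingredient (ii): setting up the action cleanly and proving the equivalence $G_\alpha = G_\beta \iff \tau^{-1}\sigma\in A_G$ while keeping the labelling bookkeeping straight. In particular one must be consistent about how $G_\sigma$ is labelled, since the edge-label set behaves differently under the literal labelling $\lambda$ than under the induced relabelling; this consistency is exactly what pins down whether the resulting coset is a left or a right coset. Translating the abstract isomorphism $H\cong G$ coming from feasibility into an explicit $\tau$ with $G_\tau = H$ is a smaller instance of the same bookkeeping, so I would settle that first and then reuse it in the coset argument.
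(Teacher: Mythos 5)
Your proposal is correct and is essentially the paper's own argument: the paper fixes $\sigma\in\Fer_G(e_1\to e_2)$ and verifies both inclusions of $\Fer_G(e_1\to e_2)=\sigma\Aut(G)$ using exactly the composition identity $G_{\sigma\tau}=(G_\tau)_\sigma$ that you isolate as ingredient (ii), with $A_G$ playing the role of the stabilizer. Your orbit--stabilizer packaging is just a more abstract phrasing of the same two-line coset computation, so there is nothing substantive to add.
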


\begin{proof}
Let $\sigma \in \Fer_G(e_1 \to e_2)$. We wish to show that $\Fer_G(e_1 \to e_2) = \sigma\Aut(G)$.\\ Let $\tau \in \Aut(G)$. Then 
\[
    G_{\sigma\tau} = (G_\tau)_\sigma = G_\sigma = G - e_1 + e_2.
\]
Thus, $\sigma\tau \in \Fer_G(e_1 \to e_2)$ for any $\tau \in \Aut(G)$. It follows that $\sigma\Aut(G) \subseteq \Fer_G(e_1 \to e_2)$.

Now let $\tau \in \Fer_G(e_1 \to e_2)$. Since $G_\sigma = G - e_1 + e_2$, then $(G - e_1 + e_2)_{\sigma^{-1}} = G$. Then
\[
    G_{\sigma^{-1}\tau} = (G_\tau)_{\sigma^{-1}} = (G - e_1 + e_2)_{\sigma^{-1}} = G.
\]
Therefore $\sigma^{-1}\tau \in \Aut(G)$. Thus $\tau = \sigma(\sigma^{-1}\tau) \in \sigma\Aut(G)$ and $\Fer_G(e_1 \to e_2) \subseteq \sigma\Aut(G)$. 
\end{proof}

The authors of \cite{caro2023graphs} observed that feasible edge-replacements had meaningful group structure given by 
permuting labels. In particular, it led to the definition of the \textit{feasible edge-replacement group} $\fer(G)$ of a graph $G$, or \textit{Fer group} for short,  generated by the set $\mathcal{E}_G= \bigcup_{e\to e' \in R_G}\fer_G (e\to e')$ which contains the permutations associated to feasible edge-replacements in $G$. Hence, $$\fer(G) = \langle \mathcal{E}_G \rangle.$$ By \Cref{lem:cosets}, we see that  $\mathcal{E}_G$ is partitioned into left cosets $\fer_G (e\to e')$ and is therefore closed under left-multiplication by $\Aut(G)$.

    Let $G$ be a graph with a labeling $\lambda: V(G) \to X$. We say that $G$ is a \emph{local amoeba} if $\fer(G)=\Sym(X)$. We say that $G$ is a \emph{global amoeba} if there is an integer $t\geq 1$ such that $G\cup t K_1$ is a local amoeba. Consider the set $\mathcal{E}_G^{i}$ of all permutations associated to edge-replacements in $R_G$ that fix the label $i \in X$, i.e.,
$$\mathcal{E}_G^{i}= \mathcal{E}_G \cap \stab_{\fer(G)}(i).$$ 

Let $\fer^{i}(G)$ be the subgroup of $\stab_{\fer(G)}(i)$ generated by the set $\mathcal{E}_G^{i}$. These subgroups restrict feasible edge-replacements that can be inherited to a larger graph as seen through the following lemma.

\begin{lemma}[\cite{eslava2023new}]\label{lemma:extends}
Let $H$ and $J$ be two vertex disjoint graphs provided with their corresponding disjoint sets of labels $X$ and $Y$. Consider vertices $v_x \in V(H)$, $v_y \in V(J)$ with labels $x \in X$ and $y \in Y$, respectively, and the graph $G = (H \cup J)+v_xv_y$ with the inherited set of labels $X \cup Y$. If $\alpha \in \mathcal{E}^{x}_H$, then $\alpha \cup {\rm id}_{\fer(J)} \in \mathcal{E}^{x}_G$.
\end{lemma}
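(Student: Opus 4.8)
The plan is to realize $\beta := \alpha \cup \id_{\fer(J)}$ as the permutation associated to the \emph{very same} edge-replacement in $G$ that $\alpha$ realizes in $H$. Since $\alpha \in \mathcal{E}^x_H = \mathcal{E}_H \cap \stab_{\fer(H)}(x)$, there is a feasible edge-replacement $e \to e' \in R_H$, with labels $rs \to kl$ (possibly the neutral one, in which case $\alpha \in A_H$), such that $\alpha \in \Fer_H(e \to e')$ and $\alpha(x) = x$. As $r,s,k,l$ all lie in $X$, the same pair of edges $e = v_rv_s$, $e' = v_kv_l$ makes sense inside $G$, so the target is to prove that $\beta \in \Fer_G(e \to e')$ and that $e \to e' \in R_G$; together with the immediate observation $\beta(x) = \alpha(x) = x$, this yields $\beta \in \mathcal{E}_G \cap \stab_{\fer(G)}(x) = \mathcal{E}^x_G$, since $\mathcal{E}_G \subseteq \fer(G)$.

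The core of the argument is the single edge-set identity $E(G_\beta) = E(G - e + e')$, which I would establish by a case analysis on the location of a pair of labels, using the decomposition $L_G = L_H \cup L_J \cup \{xy\}$ together with the fact that $\beta$ restricts to $\alpha$ on $X$ and to the identity on $Y$. For a pair $i,j \in X$ one has $\beta(i)\beta(j) = \alpha(i)\alpha(j)$ with both coordinates in $X$, so membership in $L_G$ is equivalent to membership in $L_H$; hence $v_iv_j \in E(G_\beta)$ iff $v_iv_j \in E(H_\alpha) = E(H - e + e')$, which is exactly the restriction of $E(G - e + e')$ to pairs in $X$, as $e$ and $e'$ sit among $X$-vertices. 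For a pair $i,j \in Y$, $\beta$ acts as the identity, so $v_iv_j \in E(G_\beta)$ iff $ij \in L_J$ iff $v_iv_j \in E(J)$, and the replacement does not touch $Y$, so this matches $E(G - e + e')$ on pairs in $Y$.

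The step I expect to be the crux is the cross pairs $i \in X$, $j \in Y$, and this is precisely where the hypothesis $\alpha(x) = x$ is used. Here $\beta(i)\beta(j) = \alpha(i)\,j$ lies in $L_G$ iff it equals the unique bridge label $xy$, i.e.\ iff $\alpha(i) = x$ and $j = y$; because $\alpha$ fixes $x$ this happens iff $i = x$ and $j = y$. Thus $G_\beta$ retains exactly the single cross edge $v_xv_y$, matching $G - e + e'$, whose only cross edge is the untouched bridge. Were $\alpha$ to move $x$, the bridge would be displaced and the identity would fail, so stabilizing $x$ is the essential ingredient. Finally I would record that the replacement is legitimate in $G$: $e = v_rv_s \in E(H) \subseteq E(G)$, and in the non-neutral case $e' = v_kv_l$ is a non-edge of $H$ between two $X$-labels, hence a non-edge of $G$, so $e' \in E(\overline{G}) \cup \{e\}$; moreover $G_\beta \cong G$ by construction, so the identity $G_\beta = G - e + e'$ forces $G - e + e' \cong G$, i.e.\ $e \to e' \in R_G$. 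Combining the three cases gives $E(G_\beta) = E(G - e + e')$, whence $\beta \in \Fer_G(e \to e') \subseteq \mathcal{E}_G$, which completes the argument.
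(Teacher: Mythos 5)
The paper does not prove this lemma; it is imported verbatim from \cite{eslava2023new} and stated without proof, so there is no in-paper argument to compare against. Your proposal is a correct, self-contained verification. The strategy --- realizing $\alpha\cup\id_Y$ via the same edge-replacement $rs\to kl$ with all four labels in $X$, and checking $E(G_\beta)=E(G-e+e')$ by splitting label pairs into $X$--$X$, $Y$--$Y$, and cross pairs --- is the natural one, and each case goes through: the $X$--$X$ pairs reduce to $H_\alpha=H-e+e'$, the $Y$--$Y$ pairs are untouched, and the cross case correctly isolates where the hypothesis $\alpha(x)=x$ is indispensable (injectivity of $\alpha$ gives $\alpha(i)=x\iff i=x$, so the unique bridge edge $v_xv_y$ is preserved). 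You also correctly dispatch the neutral replacement (where $\alpha\in A_H$ and the conclusion is $\beta\in A_G$) and the legitimacy of $e'$ as a non-edge of $G$. I see no gap.
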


Motivated by the group characterization of local amoebas, the authors of \cite{eslava2023new} defined an analogous class of graphs $G$ with a fixed label $i$ for which $\fer^{i}(G)$ is isomorphic to a symmetric group on $n(G)-1$ elements.

Let $G$ be a graph and $v \in V(G)$, and let $\lambda: V(G) \to X$ be a labeling of $G$ and $b = \lambda(v)$. We say that $G$ is \emph{stem-symmetric at $v$} if $\fer^{b} (G)\cong S_{n(G)-1}.$ When the labeling is clear from context, by abuse of notation, we may also say that $G$ is stem symmetric at the label $b$. 

Stem-symmetric graphs $G$ are useful for constructing and detecting local amoebas. The following lemma states that a stem-symmetric graph $G$ is, in fact, a local amoeba provided we can exhibit a suitable permutation.

\begin{lemma}[\cite{eslava2023new}]\label{lem:stem-sym_local}
    Let $G$ be a labeled graph that is stem-symmetric with respect to a vertex $v$, whose label is $b$. If
    \[\fer(G) \setminus {\rm Stab}_G(b) \neq \emptyset,\] 
    then $G$ is a local amoeba.
\end{lemma}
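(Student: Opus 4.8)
The plan is to show that $\fer(G) = \Sym(X)$, which by definition makes $G$ a local amoeba. I have two pieces of information to combine: first, stem-symmetry at $b$ gives me a copy of the symmetric group on the remaining labels $X \setminus \{b\}$ sitting inside $\fer(G)$ as the stabilizer-type subgroup $\fer^b(G) \cong S_{n(G)-1}$; second, the hypothesis $\fer(G) \setminus \stab_G(b) \neq \emptyset$ gives me at least one feasible-edge-replacement permutation that moves the label $b$.

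Let me set up the group theory. Write $n = n(G)$ and identify $\Sym(X)$ with $S_n$ acting on the $n$ labels. The subgroup $\fer^b(G)$ is generated by permutations fixing $b$, so it sits inside the point stabilizer $\stab_{S_n}(b) \cong S_{n-1}$. Since $\fer^b(G) \cong S_{n-1}$ and the full point stabilizer is also $S_{n-1}$, a cardinality/index argument forces $\fer^b(G) = \stab_{S_n}(b)$ exactly (a subgroup of $S_{n-1}$ abstractly isomorphic to $S_{n-1}$ must be the whole group). So I actually get the \emph{entire} stabilizer of $b$ inside $\fer(G)$.

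Now the plan is a transitivity-plus-stabilizer argument. Because $\fer^b(G) = \stab_{S_n}(b) \leq \fer(G)$, the group $\fer(G)$ contains all permutations fixing $b$; in particular it acts as the full symmetric group on $X \setminus \{b\}$, so its action on that set is transitive. The hypothesis supplies an element $\rho \in \fer(G)$ with $\rho(b) \neq b$. Combining $\rho$ with the transitivity already available on $X \setminus \{b\}$, I can send $b$ to any prescribed label: conjugating or composing $\rho$ with elements of $\stab_{S_n}(b)$ lets me realize every transposition-type move out of $b$, which upgrades the action of $\fer(G)$ on all of $X$ to a transitive one. Then the orbit-stabilizer theorem gives $|\fer(G)| = |X| \cdot |\stab_{\fer(G)}(b)| \geq n \cdot |\stab_{S_n}(b)| = n \cdot (n-1)! = n!$, so $\fer(G) = S_n = \Sym(X)$, as required.

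The step I expect to require the most care is the passage from "$\fer(G)$ contains the full point-stabilizer of $b$ and at least one element moving $b$" to "$\fer(G)$ is transitive on all of $X$." This is really the statement that a subgroup of $S_n$ containing a point stabilizer $\stab(b)$ together with any single element not in that stabilizer must be transitive (indeed, all of $S_n$); the clean way to see it is that the orbit of $b$ under $\fer(G)$ is strictly larger than $\{b\}$, but the stabilizer acts transitively on the complement $X \setminus \{b\}$, so once $b$ shares an orbit with even one other point it shares an orbit with all of them, making the action transitive. I would state this as the key subgroup lemma and then close with the orbit-stabilizer count above; the rest is routine bookkeeping.
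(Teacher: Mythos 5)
Your argument is correct. Note that the paper does not prove this lemma itself --- it is quoted from \cite{eslava2023new} --- so there is no in-paper proof to compare against; but your reasoning is the standard one and every step holds: stem-symmetry forces $\fer^b(G)$, a subgroup of the finite group $\on{Stab}_{\Sym(X)}(b)\cong S_{n-1}$ that is abstractly isomorphic to $S_{n-1}$, to equal the full point stabilizer; the hypothesis supplies $\rho$ with $\rho(b)=c\neq b$; and since the stabilizer already acts transitively on $X\setminus\{b\}$, composing it with $\rho$ makes the orbit of $b$ all of $X$, after which orbit--stabilizer gives $|\fer(G)|\geq n!$. The only remark worth adding is that the transitivity-plus-counting step can be compressed to one line by invoking the maximality of point stabilizers in $S_n$: $\fer(G)$ properly contains the maximal subgroup $\on{Stab}_{\Sym(X)}(b)$, hence equals $\Sym(X)$; your more elementary orbit argument proves exactly this instance of maximality, so the two routes are equivalent in substance.
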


Throughout this work, we will use the fact that $G$ is a \emph{global amoeba} if and only if $G\cup K_1$ is a local amoeba \cite{caro2023graphs}. This result implies the following.

\begin{lemma}[\cite{caro2021unavoidable}]\label{lem:local-am with leaf implies local and global}
    If $G$ is a local amoeba with $\delta(G) \in \{0, 1\}$, then $G \cup K_1$ is a local amoeba, and so $G$ is a global amoeba.

\end{lemma}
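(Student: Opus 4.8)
The plan is to prove the stronger statement that $G\cup K_1$ is a local amoeba, i.e.\ that $\fer(G\cup K_1)=\Sym(X\cup\{w\})$, where $X$ is the label set of $G$ and $w$ is the label of the newly added isolated vertex; the ``global amoeba'' conclusion is then immediate from the cited equivalence ``$G$ is a global amoeba iff $G\cup K_1$ is a local amoeba.'' The argument splits into two halves: first lift all of $\fer(G)$ into $\fer(G\cup K_1)$ as the pointwise stabilizer of $w$, and then produce a single feasible edge-replacement whose associated permutation moves $w$.

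For the first half, I would observe that any feasible edge-replacement $e_1\to e_2\in R_G$ remains feasible in $G\cup K_1$, since $G-e_1+e_2\cong G$ forces $(G\cup K_1)-e_1+e_2\cong G\cup K_1$. Moreover, if $\sigma\in\Fer_G(e_1\to e_2)$ then its extension $\sigma'=\sigma\cup\id_{\{w\}}$ satisfies $(G\cup K_1)_{\sigma'}=G_\sigma\cup K_1=(G\cup K_1)-e_1+e_2$, because $L_{G\cup K_1}=L_G$ and $w$ is incident to no edge label; hence $\sigma'\in\E_{G\cup K_1}$. This is the disjoint-union analogue of \Cref{lemma:extends}. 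Consequently the homomorphism $\sigma\mapsto\sigma\cup\id_{\{w\}}$ carries the generating set $\E_G$ into $\E_{G\cup K_1}$, so it sends $\fer(G)=\Sym(X)$ into $\fer(G\cup K_1)$; thus $\fer(G\cup K_1)$ contains the full copy of $\Sym(X)$ fixing $w$.

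For the second half, I would use the degree hypothesis to exhibit a transposition $(a\,w)$ with $a\in X$ inside $\fer(G\cup K_1)$. If $\delta(G)=0$, pick an isolated vertex of $G$ with label $a$; then $(a\,w)$ swaps two isolated vertices of $G\cup K_1$ and hence lies in $\Aut(G\cup K_1)$, which sits in $\fer(G\cup K_1)$ as $\Fer_{G\cup K_1}(\emptyset\to\emptyset)$. If $\delta(G)=1$, pick a leaf with label $\ell$ attached to the vertex labeled $p$; I claim the edge-replacement $\ell p\to wp$ is feasible and that $(\ell\,w)\in\Fer_{G\cup K_1}(\ell p\to wp)$. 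Indeed, moving the pendant edge from $v_\ell$ to the isolated $v_w$ yields a graph isomorphic to $G\cup K_1$, and a direct check against $L_{G\cup K_1}=L_G$ shows $(G\cup K_1)_{(\ell\,w)}=(G\cup K_1)-v_\ell v_p+v_w v_p$, so $(\ell\,w)\in\E_{G\cup K_1}$. In either case $\fer(G\cup K_1)$ contains a transposition moving $w$.

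Finally, I would combine the two halves: the copy of $\Sym(X)$ fixing $w$ together with any transposition $(a\,w)$ generates all of $\Sym(X\cup\{w\})$, since conjugating $(a\,w)$ by transpositions in $\Sym(X)$ yields every $(b\,w)$, and these together with $\Sym(X)$ give all transpositions on $X\cup\{w\}$. Hence $\fer(G\cup K_1)=\Sym(X\cup\{w\})$, so $G\cup K_1$ is a local amoeba and $G$ is a global amoeba. I expect the main obstacle to be the bookkeeping in the degree-one case---verifying feasibility of $\ell p\to wp$ and that the induced permutation is exactly $(\ell\,w)$ relative to $L_{G\cup K_1}$---together with making the disjoint-union lift of $\fer(G)$ precise, since \Cref{lemma:extends} is stated only for the edge-joined union.
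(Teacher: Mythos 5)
Your proof is correct, but note that the paper does not actually prove this lemma: it is imported from \cite{caro2021unavoidable} as a black box, so there is no in-paper argument to compare against. Your two-step strategy --- lift $\fer(G)=\Sym(X)$ into $\fer(G\cup K_1)$ as the point stabilizer of the new label $w$, then exhibit one feasible transposition moving $w$ --- is the standard route, and the details check out. The disjoint-union lift you prove by hand (that $e_1\to e_2\in R_G$ stays feasible in $G\cup K_1$, and that $\sigma\cup\id_{\{w\}}$ lands in $\E_{G\cup K_1}$ because $L_{G\cup K_1}=L_G$ and $w$ occurs in no edge label) is essentially the same computation the paper asserts later in the proof of \Cref{cor: global amoeba = transitive}, where it identifies $\E_{G^*}$ with $\E^j_{G^{**}}$; so your ad hoc replacement for \Cref{lemma:extends} is consistent with machinery the paper uses elsewhere. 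Your case analysis on $\delta(G)$ is also sound: for $\delta(G)=0$ the swap of two isolated vertices lies in $A_{G\cup K_1}=\Fer_{G\cup K_1}(\emptyset\to\emptyset)\subseteq\E_{G\cup K_1}$, and for $\delta(G)=1$ the verification that $(\ell\,w)\in\Fer_{G\cup K_1}(\ell p\to wp)$ reduces to checking that $\ell$ appears only in the edge label $\ell p$ and $w$ in none, which you do. The final generation step could be dispatched even faster via maximality of the point stabilizer $S_{|X|}$ in $S_{|X|+1}$ (a fact the paper itself invokes in \Cref{cor:also_big_corollary}), but your explicit conjugation argument is equally valid.
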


\Cref{thm:recursion} states a general method by which local amoebas can be constructed via a recursion. Global amoebas can be constructed as well if the minimum degree is at most $1$. Authors in \cite{eslava2023new} have given recursive constructions for various families of local amoebas. 

\begin{theorem}[\cite{eslava2023new}]\label{thm:recursion}
    Let $H_1, J_1, H_2, J_2$ be vertex disjoint graphs provided with the roots $u, v, w, y$, respectively, and such that $H_1 \cong H_2$ and $u$ is similar to $w$. Let $H = (H_1 \cup J_1) + uv$ be stem-symmetric at $v$ and let $J= (H_2 \cup J_2) + wy$ be stem-symmetric with respect to $w$. Let $G = (H \cup J) + vw$ be labeled and let $b$ the label on $v$. Then we have the following facts.
    \begin{enumerate}
        \item[(i)] $G$ is stem-symmetric at $v$.
        \item[(ii)] If $\fer(G) \setminus {\rm Stab}_G(b) \neq \emptyset$, then $G$ is a local amoeba.
        \item[(iii)] If $\fer(G) \setminus {\rm Stab}_G(b) \neq \emptyset$ and $\delta(G) \le 1$, then $G$ is a global and a local amoeba.
    \end{enumerate}
    
\end{theorem}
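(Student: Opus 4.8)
The plan is to establish (i) directly and then read off (ii) and (iii) from the lemmas already in hand. Write $Y = X \setminus \{b\}$ for the label set of $G$ with $b$ removed, so that $|Y| = n(G)-1$. Since every element of $\fer^b(G)$ fixes $b$, we always have $\fer^b(G) \le \Sym(Y)$, and proving (i) amounts to the reverse containment $\fer^b(G) = \Sym(Y)$. Let $c$ be the label of $w$, let $A = X_H \setminus \{b\}$ collect the labels of $H$ other than $b$, and let $B' = X_J \setminus \{c\}$ collect the labels of $J$ other than $c$; note that $A$, $B'$, and $\{c\}$ partition $Y$.

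First I would assemble two large symmetric subgroups of $\fer^b(G)$ from the hypotheses. Because $H$ is stem-symmetric at $v$, the group $\fer^b(H) = \langle \mathcal{E}_H^b\rangle \cong S_{n(H)-1}$ is the full symmetric group on $A$; applying \Cref{lemma:extends} across the bridge $vw$ lifts each generator $\alpha \in \mathcal{E}_H^b$ to $\alpha \cup \id_{\fer(J)} \in \mathcal{E}_G^b$, so $\Sym(A) \le \fer^b(G)$. Symmetrically, because $J$ is stem-symmetric at $w$, the group $\fer^c(J) \cong S_{n(J)-1}$ is the full symmetric group on $B'$; the mirror form of \Cref{lemma:extends} lifts its generators to elements $\id_{\fer(H)} \cup \beta \in \mathcal{E}_G$ that fix every label of $H$ (in particular $b$) and also fix $c$, hence lie in $\mathcal{E}_G^b$, giving $\Sym(B') \le \fer^b(G)$. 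At this stage $\fer^b(G) \ge \Sym(A)\times\Sym(B')$, but every element produced so far fixes $c$, so these generators alone are blind to $c$ and cannot mix the blocks $A$ and $B'$.

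The crux is to produce one further feasible edge-replacement that breaks this block structure, and this is exactly where the hypotheses $H_1 \cong H_2$ and $u$ being similar to $w$ enter. Fix an isomorphism $H_1 \to H_2$ carrying $u$ to $w$, and let $\pi$ be the label permutation that interchanges the two isomorphic end-blocks accordingly while fixing $v$ and every label of $J_1$ and $J_2$. I would then check that $\pi$ is an isomorphism $G - wy + uy \to G$: it sends the internal edges of $H_1$ to those of $H_2$, interchanges $uv$ with $vw$, and sends the new edge $uy$ to $wy$, so that $wy \to uy$ is feasible and the associated permutation $\pi$ lies in $\mathcal{E}_G^b$ because $\pi$ fixes $v$. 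Crucially $\pi(c) = p$, the label of $u$, which lies in $A$, so $\pi$ drags $c$ out of its fixed position and into the $A$-block. Using the standard fact that $\langle \Sym(P), \Sym(Q)\rangle = \Sym(P\cup Q)$ whenever $P \cap Q \neq \emptyset$, I would conjugate $\Sym(A)$ by $\pi$ to obtain $\Sym(\pi(A))$, note that $\pi(A)$ contains $c$ and overlaps both $A$ and $B'$, and combine $\Sym(A)$, $\Sym(\pi(A))$, and $\Sym(B')$ block by block until the union exhausts $Y = A \cup B' \cup \{c\}$, which yields $\fer^b(G) = \Sym(Y)$ and hence (i).

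The main obstacle is verifying that these generators are genuinely connected, i.e. that $\fer^b(G)$ acts transitively on $Y$. When the end-pieces are nontrivial, $\pi$ interchanges the full label-blocks of $H_1$ and $H_2$ and therefore already carries a part of $B'$ into $A$, so the overlaps needed above are automatic; the delicate case is the degenerate one $H_1 \cong H_2 \cong K_1$, where $u$ and $w$ are leaves, $\pi$ collapses to the single transposition interchanging $p$ and $c$, and one must supply an extra bridge-reattachment replacement to link $c$ to $B'$ — precisely the leaf situation governed by the stem-symmetry-under-added-leaves results. Finally, (ii) is immediate: part (i) makes $G$ stem-symmetric at $v$, so the hypothesis $\fer(G) \setminus {\rm Stab}_G(b) \neq \emptyset$ lets \Cref{lem:stem-sym_local} conclude that $G$ is a local amoeba; and (iii) follows by combining (ii) with \Cref{lem:local-am with leaf implies local and global}, whose hypothesis $\delta(G)\le 1$ is assumed, so that $G$ is also a global amoeba.
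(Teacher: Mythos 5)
The paper does not actually prove this statement: \Cref{thm:recursion} is imported verbatim from \cite{eslava2023new}, so there is no in-paper argument to compare yours against, and your attempt has to stand on its own. Parts (ii) and (iii) are handled correctly — (i) plus \Cref{lem:stem-sym_local} gives (ii), and adding \Cref{lem:local-am with leaf implies local and global} gives (iii). For (i), your generic-case outline is sound: lifting $\fer^b(H)$ and $\fer^c(J)$ across the bridge via \Cref{lemma:extends} does yield $\Sym(A)$ and $\Sym(B')$ inside $\fer^b(G)$, the replacement $wy\to uy$ is feasible exactly because of the hypotheses $H_1\cong H_2$ and $u$ similar to $w$, and its permutation $\pi$ (which interchanges the label sets of $H_1$ and $H_2$) fixes $b$. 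When $|V(H_1)|\ge 2$ the conjugates of $\Sym(A)$ and $\Sym(B')$ under $\pi$ overlap pairwise (after at most one extra conjugation when $J_1=K_1$), and the standard generation fact finishes the argument.

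The genuine gap is the degenerate case $H_1\cong H_2\cong K_1$, which the hypotheses do not exclude and which is precisely the case needed for paths, the families $\mathcal{A}$ and $\mathcal{B}$, and the Fibonacci-type trees. There $\pi$ collapses to the single transposition $(p\,c)$, and \emph{every} generator you have produced — $\Sym(A)$, $\Sym(B')$, $\pi$, and all their conjugates — preserves the partition of $Y$ into $A\cup\{c\}$ and $B'=X_{J_2}$, so they generate at most $\Sym(A\cup\{c\})\times\Sym(B')$, never $\Sym(Y)$. You acknowledge this and appeal to "an extra bridge-reattachment replacement," but you never exhibit it, never verify its feasibility, and such a replacement does not exist uniformly: take $H_1=H_2=J_1=K_1$ and $J_2=K_3$ rooted at $y$ (all hypotheses check out — $H=P_2$ is trivially stem-symmetric at $v$, and the paw graph $J$ is stem-symmetric at its pendant $w$). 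In the resulting $G$ (a triangle with a pendant $P_3$ attached at $y$), an exhaustive check of the feasible edge-replacements whose permutations fix $b$ shows that each one preserves the two blocks $\{p,c\}$ and $\{q,r_1,r_2\}$, so no single additional replacement of the kind you gesture at is available. The degenerate case therefore requires a genuinely different mechanism (for instance, permutations arising only as compositions of replacements that individually move $b$, which \cite{eslava2023new} may be permitting under its conventions for $\fer^i$), and as written your proof cannot be completed.
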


In \cite{hansberg2021recursive}, the authors introduced a way of gluing global amoebas which resulted in larger global amoebas. They defined a graph $G$ with label set $X$ and a root labeled $k$ to be stem-transitive if there is a set  $S\subseteq \stab_{\fer(G)}(k)$ such that $\langle S \rangle$ acts transitively on $X\setminus \{k\}$. The importance of this definition lies in the fact that the vertex with label $k$ must not move for any reason. However if we use the previous definition, there may be permutations that belong to $S\subseteq \stab_{\fer(G)}(k)$ which may correspond to a series of \textit{multiple} feasible edge-replacements where some them may move $k$, but their composition does not. In direct communication with one of the authors, we agreed that the definition of a stem-transitive graph should be as follows. Let $G$ be a graph with label set $X$ and a root labeled $i$. We say that $G$ is \emph{stem-transitive} at its root if $\langle \E_G^i \rangle$ acts transitively on $X \setminus \{i\}$. Using $\E_G^i$ instead of $\stab_{\Fer(G)}(i)$ ensures that for a permutation which is a composition of \textit{multiple} feasible edge replacements, each component of the composition fixes the label $i$.

If there is a vertex $v_j$, with $j\neq k$, such that there is a permutation $\varphi\in A_G$ with $\varphi(k)=j$, then $v_j$ is called a \emph{root-similar vertex}. Using a characterization of global amoebas \cite{caro2023graphs}, the authors proved that a rooted graph $G$ with $\delta(G)=1$ which is stem-transitive and has a root-similar vertex is a global amoeba. A graph with these properties is called a \emph{double-rooted global amoeba}. Moreover, a local amoeba with these properties is called \emph{double-rooted local amoeba}. The authors provided three different recursive constructions of local and global amoebas using these properties.

\bigskip
\noindent \textbf{Notation.} Throughout this work, our results make use of the following graph constructions. If $G$ is a graph rooted at a vertex with label $i$, let $G^*$ be the rooted graph $G$ with an isolated vertex with the root inherited from $G$. Let $G^{\dagger}$ be the graph $G$ with a leaf connected to the root of $G$. In this case, $G^\dagger$ is rooted at the newly-added leaf.

\bigskip

\subsection{Our Contributions and Main Results}

The remaining sections of this paper present our contributions to the theory of amoeba graphs.  We develop new group-theoretic invariants and constructive methods for local and global amoebas through the lenses of stem- and hang-symmetry.

In \Cref{sec: The Hang Group}, we define the \emph{hang group} of a graph $G$ at a vertex $v \in V(G)$ and \emph{hang-symmetry} as an analog to stem-symmetry as introduced in \cite{eslava2023new}.  We show that for a graph G and the graph $G^\dagger$ formed by adding a leaf to the root of $G$, hang-symmetry and stem-symmetry are equivalent properties:

\bigskip

\noindent\textbf{\Cref{prop:hang-group}.}
\emph{Let $G$ be a rooted graph with labeling $\lambda : V(G) \to Y$ such that $j$ is the label of its root $G$. Consider the graph $G^\dagger$, where $i$ is the label of the newly added leaf. Then 
\begin{center}
 $G^\dagger$ is stem-symmetric at $v_i$ if and only if $G$ is hang-symmetric at $v_j$.
\end{center}}

In 
\Cref{sec: stem hang-sym global}, we show equivalent conditions for stem-symmetry at a vertex within $G^*$. 
\bigskip 

\noindent\textbf{\Cref{thm:stem-sym-global}.}\emph{
    Let $G$ be a labeled graph rooted at a vertex labeled $i$ and let $X$ be the label set of $G^*$ with the new isolated vertex labeled $j$. Then the following are equivalent:
    \begin{enumerate}[label = (\alph*)]
        \item $G^*$ is stem-symmetric at $v_i$;
        \item $\Fer^i(G^*)$ acts transitively on $X \setminus \{i\}$.
        \item Every orbit of $\Fer^i(G)$ acting on $X \setminus \{i, j\}$ contains a leaf.
    \end{enumerate}}
    \bigskip 
    
This is then leveraged to establish a new characterization of global amoebas thereby extending the existing list found in~\cite[Theorem 15 (iii)]{caro2023graphs}:
\bigskip 

\noindent\textbf{\Cref{cor: global amoeba = transitive}.}
\emph{Let $G$ be a graph. Then $G$ is a global amoeba if and only if $\Fer(G^*)$ acts transitively on its set of labels.}

\bigskip 
An immediate consequence of this characterization is that $\Fer$ groups of graphs with an isolated vertex are generically intransitive; Not all transitive groups are symmetric, yet Fer groups of such graphs are either intransitive or symmetric. In \Cref{sec: Comb Product of Amoebas}, we pivot to interactions between the local amoeba property and comb products of graphs. In particular, we provide a set of sufficient conditions for when the comb product of a pair of graphs is a local amoeba: 
\bigskip 

\noindent\textbf{\Cref{cor:big_corollary}.}
\emph{Let $G$ be a local amoeba on $m$ vertices with a leaf and let $H$ be a rooted graph on $n$ vertices which is hang-symmetric at its root. Then $\Fer(G * H)$ is either equal to $S_m \wr S_n$ or $S_{mn}$, and in the latter case, $G * H$ is a local amoeba.
}\bigskip 

This is guaranteed when $H$ is a disconnected local amoeba (see~\Cref{lem:disconnected_comb_product}) or when $H$ is a path (see~\Cref{thm:path-amoeba}). An analogous set of sufficient conditions is obtained for when the comb product is a global amoeba:
\bigskip

\noindent\textbf{\Cref{thm-ifH*hs-G*Hglobam}.}
\emph{Let $G$ be a global amoeba and let $H$ be a rooted graph. If $H^*$ is hang-symmetric at its root, then $G * H$ is a global amoeba.}
\bigskip

Note that the comb product was previously studied in \cite[Theorem 3.10]{hansberg2021recursive}, and the sufficient conditions above strictly extend their result. In \Cref{sec: Stem-Symmetry Hang-Symmetry under the Comb product}, we examine when stem-symmetry and hang-symmetry are preserved under the comb product. In particular, if $G^*$ and $H^*$ are stem- (resp. hang-) symmetric, we find that $(G * H)^*$ is as well:
\bigskip

\noindent\textbf{\Cref{thm:when_is_comb_product_stem_sym_global}.}
\emph{If $G^*$ is stem-symmetric at its root labeled $i$ and $H^*$ is stem-symmetric at its root labeled $j$ with a root-similar vertex, then $(G * H)^*$ is stem-symmetric at the label $(j, i)$.}

\bigskip
\noindent\textbf{\Cref{cor:(G*H)* hang sym}.}
\emph{If $G^*$ is hang-symmetric with root labeled $i$ and $H^*$ is hang-symmetric with root labeled $j$, then $(G * H)^*$ is hang-symmetric with root labeled $(j, i)$.}
\bigskip

In some cases these properties can be propagated via iterated comb products, e.g., iterated comb products of paths. Analogous statements to the main theorem in the previous section also appear in this section. For example:
\bigskip

\noindent\textbf{\Cref{cor:hangsym under comb}.}
\emph{If $G$ has a leaf and is hang-symmetric at a vertex labeled $i$ and $H$ is hang-symmetric at a vertex labeled $j$, then $\bold{H}_{(j, i)}(G * H)$ is either equal to $S_m \wr S_n$ or $S_{mn}$.}
\bigskip

   \Cref{sec: Conclusion and open problems} is our conclusion which contains a summary and list of open problems for future research. To illustrate the relationships between the local/global amoeba properties and other properties discussed in this paper, we include two flow charts in which arrows represent implications:

\begin{figure}[H]
    \centering
\[\begin{tikzcd}
	& \boxed{\begin{array}{c} \substack{G \text{ is stem-symmetric at its} \\ \text{root, has a leaf, and} \\ \text{has a root-similar vertex}} \end{array}}\\
	\boxed{\begin{array}{c} \substack{G \text{ is hang-symmetric at its} \\ \text{root, and has a leaf}} \end{array}} & \boxed{\begin{array}{c} \substack{G^* \text{ is stem-symmetric at its root, has} \\ \text{a root-similar vertex, and has an} \\ \text{ isolated vertex which is not its root}} \end{array}} & \boxed{\begin{array}{c} \substack{G \text{ is stem-symmetric} \\ \text{at its root, and has} \\ \text{a leaf that is not its root}} \end{array}}\\
	\boxed{\begin{array}{c} \substack{G^{\dagger*} \text{ is stem-symmetric at the} \\  \text{newly-added leaf, and has} \\ \text{a leaf which is not the root}} \end{array}} & \boxed{\begin{array}{c} \substack{G^* \text{ is hang-symmetric at its} \\ \text{root, and has an isolated} \\ \text{vertex which is not the root}} \end{array}} & \boxed{\begin{array}{c} \substack{G^* \text{ is stem-symmetric at} \\ \text{a non-isolated vertex}} \end{array}} \\
	& \boxed{\begin{array}{c} \substack{G^{\dagger*} \text{is stem-symmetric at} \\ \text{the newly-added leaf}} \end{array}} &\boxed{{G \text{ is a global amoeba}}}
	\arrow[from=1-2, to=2-1, Rightarrow]
	\arrow[from=1-2, to=2-2, Rightarrow]
	\arrow[from=1-2, to=2-3, Rightarrow]
	\arrow[tail reversed, from=2-1, to=3-1, Leftrightarrow]
	\arrow[from=2-1, to=3-2, Rightarrow]
	\arrow[from=2-2, to=3-2, Rightarrow]
	\arrow[from=2-2, to=3-3, Rightarrow]
	\arrow[from=2-3, to=3-3, Rightarrow]
	\arrow[from=3-1, to=4-2, Rightarrow]
	\arrow[tail reversed, from=3-2, to=4-2, Leftrightarrow]
	\arrow[from=3-2, to=4-3, Rightarrow]
\end{tikzcd}\]
    \caption{Diagram of implications related to the global amoeba property}
    \label{fig:placeholder}
\end{figure}
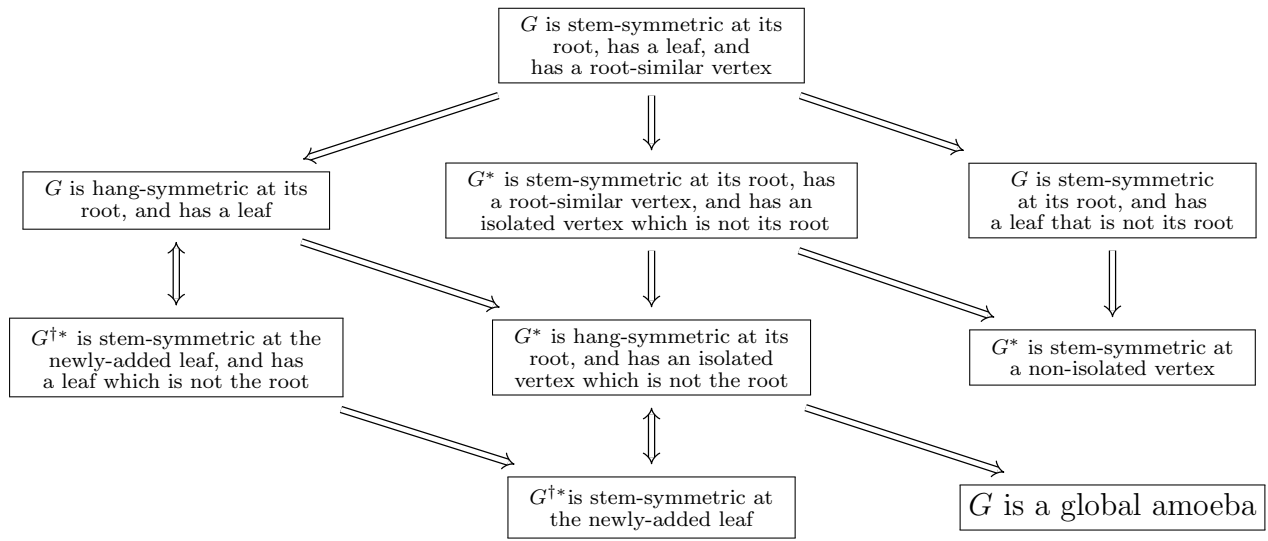

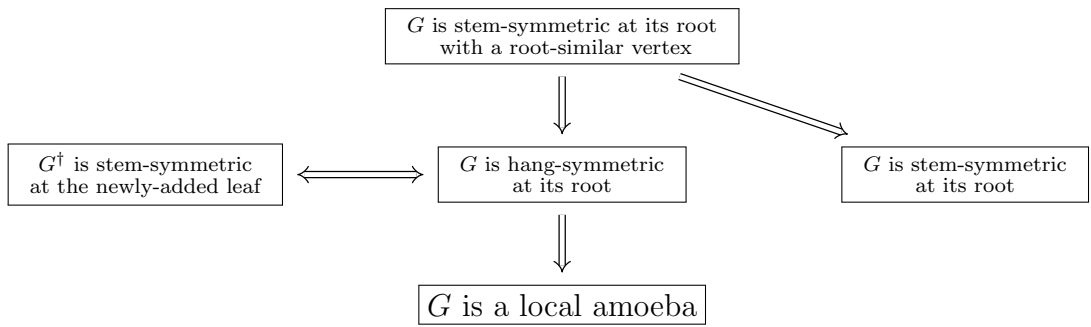
\begin{figure}[H]
    \centering
    \[\begin{tikzcd}
	& \boxed{\begin{array}{c} \substack{G \text{ is stem-symmetric at its root} \\ \text{ with a root-similar vertex}} \end{array}} \\
	\boxed{\begin{array}{c} \substack{G^\dagger \text{ is stem-symmetric} \\ \text{at the newly-added leaf}} \end{array}} & \boxed{\begin{array}{c} \substack{G \text{ is hang-symmetric} \\ \text{at its root}} \end{array}} & \boxed{\begin{array}{c} \substack{G \text{ is stem-symmetric} \\ \text{at its root}} \end{array}}\\
	& \boxed{{G \text{ is a local amoeba}}}
	\arrow[from=1-2, to=2-2, Rightarrow]
	\arrow[from=1-2, to=2-3, Rightarrow]
	\arrow[tail reversed, from=2-1, to=2-2, Leftrightarrow]
	\arrow[from=2-2, to=3-2, Rightarrow]
\end{tikzcd}\]

    \caption{Diagram of implications related to the local amoeba property}
    \label{fig:placeholder}
\end{figure}

\section{The Hang Group}~\label{sec: The Hang Group}
In this section, we define the \emph{hang group} of a graph $G$ at a vertex $v \in V(G)$, and discuss its relation to stem-symmetry at a leaf vertex. Let $G$ be a rooted graph with labeling $\lambda: V(G)\to X$ and let $v_i \in V(G)$ be the root such that $\lambda(v_i)=i$. The \emph{hang group} of $G$ at $i$ is the group $\mathbf{H}_i(G) := \langle \mathcal{E}^i_G \cup \Aut(G) \rangle$. We say $G$ is \emph{hang-symmetric with respect to $v_i$}, or simply \emph{hang-symmetric} (when the root is clear from context), if $\mathbf{H}_i(G) = \Sym(X)$. When the labeling is clear from context, by abuse of notation, we may also say that $G$ is hang-symmetric at the label $i$. 

Note that if $G$ is hang-symmetric then it is also a local amoeba. The notions of double-rootedness and hang-symmetry are related, but they are not equivalent. If $G$ is a rooted, stem-symmetric graph with a root-similar vertex, then it is also hang-symmetric by \Cref{lem:stem-sym_local}. However, the reverse implication is false. See \Cref{fig:hang-symm}.

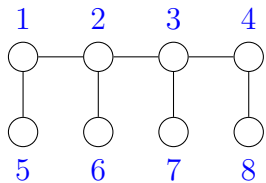
\begin{figure}[h]
  
\begin{center}
\begin{tikzpicture}
   
    \node[shape=circle,draw=black] (1) at (-1.5,0) {};
    \node[shape=circle,draw=black] (2) at (-.5,0) {};
    \node[shape=circle,draw=black] (3) at (.5,0) {};
    \node[shape=circle,draw=black] (4) at (1.5,0) {};
    \node[shape=circle,draw=black] (5) at (-1.5,-1) {};
    \node[shape=circle,draw=black] (6) at (-.5,-1) {};
    \node[shape=circle,draw=black] (7) at (.5,-1) {};
    \node[shape=circle,draw=black] (8) at (1.5,-1) {};
    \node[blue] (a) at (-1.5,.5) {$1$};
    \node[blue] (b) at (-.5,.5) {$2$};
    \node[blue] (c) at (.5,.5) {$3$};
    \node[blue] (d) at (1.5,.5) {$4$};
    \node[blue] (a) at (-1.5,-1.5) {$5$};
    \node[blue] (b) at (-.5,-1.5) {$6$};
    \node[blue] (c) at (.5,-1.5) {$7$};
    \node[blue] (d) at (1.5,-1.5) {$8$};
    \path [] (1) edge node[left] {} (2);
    \path [] (2) edge node[left] {} (3);
    \path [] (3) edge node[left] {} (4); 
    \path [] (1) edge node[left] {} (5);
    \path [] (2) edge node[left] {} (6);
    \path [] (3) edge node[left] {} (7);
    \path [] (4) edge node[left] {} (8);
   
\end{tikzpicture} 
\end{center}
\caption{A graph $G$ that is hang-symmetric with respect to $1$. The sets $\mathcal{E}^1_G = \{ (24)(68), (34)(78), (48), (47) \}$ and $Aut(G)=\{(14)(58)(23)(67)\}$ satisfy that  $\langle \mathcal{E}^1_G \cup Aut(G) \rangle = S_8 $, but $\langle \mathcal{E}^1_G \rangle \neq S_7$.}
\label{fig:hang-symm}
\end{figure}

There is an equivalent way to define $\mathbf{H}_i(G)$, which also gives a characterization of when a graph is stem-symmetric at a leaf. We employ \Cref{lemma:extends} and the following lemma to prove this.

\begin{lemma}\label{lem:injective-homomorphism}
Let $X \subseteq Y$ be sets such that $|Y| = |X| + 1$. If $Y \setminus X = \{y\}$, then the map $f: \Sym(X) \to \on{Stab}_{\Sym(Y)}(y)$ given by $f(\sigma) = \sigma \cup \id_{Y \setminus X}$ is an isomorphism of groups. Moreover, for any $x \in X$ and $\sigma \in \Sym(X)$, we have $\sigma(x) = f(\sigma)(x)$. 
\end{lemma}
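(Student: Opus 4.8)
The plan is to verify the three claimed properties of the map $f(\sigma) = \sigma \cup \id_{Y \setminus X}$ directly. Since $Y \setminus X = \{y\}$, the map $f$ sends a permutation $\sigma$ of $X$ to the permutation of $Y$ that agrees with $\sigma$ on $X$ and fixes $y$; this is manifestly a well-defined element of $\on{Stab}_{\Sym(Y)}(y)$, so the only real work is checking that $f$ is a bijective homomorphism and that evaluation agrees on $X$.

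\emph{Homomorphism.} First I would check $f(\sigma \circ \tau) = f(\sigma) \circ f(\tau)$ for $\sigma, \tau \in \Sym(X)$. Both sides are permutations of $Y$, so it suffices to compare their values on each $z \in Y$. For $z = y$, both sides fix $y$. For $z \in X$, we have $(\sigma \circ \tau)(z) = \sigma(\tau(z))$, and since $\tau(z) \in X$ the extension $f(\sigma)$ acts on it as $\sigma$ does, giving the same value; this is the routine calculation I would state but not belabor. Hence $f$ is a group homomorphism.

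\emph{Bijectivity.} Next I would exhibit the inverse, which is cleaner than separately proving injectivity and surjectivity. Any $\pi \in \on{Stab}_{\Sym(Y)}(y)$ fixes $y$, and because $\pi$ is a bijection of $Y$ fixing $y$, it restricts to a bijection $\pi|_X$ of $X$. The map $g: \pi \mapsto \pi|_X$ is then a two-sided inverse to $f$: one checks $g(f(\sigma)) = \sigma$ since $f(\sigma)$ restricted to $X$ is just $\sigma$, and $f(g(\pi)) = \pi$ since both fix $y$ and agree on $X$. This makes $f$ an isomorphism. (The one point worth a sentence is that $\pi|_X$ genuinely lands in $X$, i.e. $\pi(X) = X$, which follows from $\pi(y) = y$ together with injectivity.)

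\emph{Evaluation.} Finally, the ``moreover'' clause $\sigma(x) = f(\sigma)(x)$ for $x \in X$ is immediate from the definition of $f$, since $f(\sigma)$ was defined to equal $\sigma$ on $X$. I do not anticipate any genuine obstacle here; this lemma is a standard identification of $\Sym(X)$ with the point-stabilizer of $y$ in $\Sym(Y)$, and the main purpose of recording it explicitly is to license the later identification of Fer groups before and after adjoining a vertex. The only subtlety worth stating carefully is that restriction is well-defined on the stabilizer (the $\pi(X) = X$ point above), so I would make sure to include that one line rather than treat it as obvious.
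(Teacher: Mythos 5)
Your proof is correct and complete; the paper itself gives no proof of this lemma, remarking only that it ``follows immediately,'' and your verification (homomorphism check, inverse by restriction with the observation that $\pi(X)=X$, and the evaluation claim) is exactly the routine argument being taken for granted there. No discrepancy to report.
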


That is, the action of $\Sym(X)$ on $X$ is essentially the same as the action of $\on{Stab}_{\Sym(Y)}(y)$ on $X$. By adjoining $\id_{Y \setminus X}$ to each permutation, we are not affecting $Y \setminus X$. While Lemma~\ref{lem:injective-homomorphism} follows immediately, it is important for giving a rigorous argument for the proof of \Cref{prop:hang-group}.

\begin{proposition}\label{prop:hang-group}
Let $G$ be a rooted graph with labeling $\lambda : V(G) \to Y$ such that $\lambda(v_x)=x$ for all $x \in Y$. Moreover, let $j$ be the label of the root of $G$, and then consider the graph $G^\dagger$, where $i$ is the label of the newly added leaf. Then 

\begin{enumerate}
    \item[i)] the map $f:\mathcal{E}_{G}^j \cup \Aut(G) \to \mathcal{E}_{G^\dagger}^i$ given by $\sigma \mapsto \sigma \cup \id_{\Fer(G^\dagger \setminus G)}$ is a bijection,
    \item[ii)] the map $\tilde{f}:\mathbf{H}_j(G) \to \Fer^i(G^\dagger)$ given by $ \sigma \mapsto \sigma \cup \id_{\Fer(G^\dagger \setminus G)}$ is an isomorphism, and
    \item[iii)] $G^\dagger$ is stem-symmetric at $v_i$ if and only if $G$ is hang-symmetric at $v_j$.
\end{enumerate}
\end{proposition}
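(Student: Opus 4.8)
The plan is to prove (i), (ii), (iii) in order, with (i) carrying essentially all of the combinatorial content and (ii)--(iii) being group-theoretic bookkeeping on top of \Cref{lem:injective-homomorphism}. Throughout I would write $X = Y \cup \{i\}$ for the label set of $G^\dagger$, so that $v_i$ is a leaf pendant to the root $v_j$, and observe that $f$ and $\tilde f$ are both the extension map $\sigma \mapsto \sigma \cup \id_{\{i\}}$, which is exactly the isomorphism of \Cref{lem:injective-homomorphism} taken with $y = i$. In particular it is injective, so injectivity in (i) and (ii) is automatic, and the only real work in (i) is to pin down the image precisely as $\E^i_{G^\dagger}$.

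For (i) I would first check that $f$ lands in $\E^i_{G^\dagger}$. If $\sigma \in \E^j_G$, then $\sigma$ fixes $j$ and arises from a feasible replacement of $G$ using only edges of $L_G$; applying \Cref{lemma:extends} to $G^\dagger = (G \cup K_1) + v_j v_i$ (so $H = G$, $J = K_1$, $x = j$, $y = i$) yields $\sigma \cup \id_{\{i\}} \in \E^j_{G^\dagger}$, and since this permutation also fixes $i$ it lies in $\E^i_{G^\dagger}$. If instead $\sigma \in \Aut(G)$, a direct computation of $(G^\dagger)_{\sigma \cup \id_{\{i\}}}$ shows that its only edge at $v_i$ is $v_i v_{\sigma^{-1}(j)}$ and that its restriction to $Y$ is $G$; hence $(G^\dagger)_{\sigma \cup \id_{\{i\}}} = G^\dagger - v_i v_j + v_i v_{\sigma^{-1}(j)}$, which realizes the feasible leaf-moving replacement $ij \to i\,\sigma^{-1}(j)$ (or the neutral replacement when $\sigma(j) = j$), so again $\sigma \cup \id_{\{i\}} \in \E^i_{G^\dagger}$.

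The main obstacle is surjectivity of $f$. Given $\tau \in \E^i_{G^\dagger}$, since $\tau(i) = i$ I would set $\sigma = \tau|_Y \in \Sym(Y)$ and show $\sigma \in \E^j_G \cup \Aut(G)$. The crucial structural fact is that $v_i$ is a leaf: in $(G^\dagger)_\tau$ the vertex $v_i$ is adjacent to exactly $v_{\tau^{-1}(j)} = v_{\sigma^{-1}(j)}$ and hence has degree $1$. Writing $(G^\dagger)_\tau = G^\dagger - e + e'$ for the feasible replacement defining $\tau$, this degree-$1$ constraint rules out every case (it excludes $e = v_iv_j$ with $e'$ off the leaf, which would isolate $v_i$, and $e \in L_G$ with $e' = v_iv_k$, which would give $v_i$ degree $2$) except two. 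Either $e, e' \in L_G$, in which case the leaf edge is untouched, so $\sigma^{-1}(j) = j$ and restricting to $Y$ gives $G_\sigma = G - e + e'$, whence $\sigma \in \E^j_G$; or $e = v_iv_j$ and $e' = v_iv_k$, in which case restricting to $Y$ gives $G_\sigma = G$, whence $\sigma \in \Aut(G)$. This establishes the bijection.

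Part (ii) is then immediate: $\tilde f$ is the restriction to $\mathbf{H}_j(G) = \langle \E^j_G \cup \Aut(G) \rangle$ of the isomorphism $\Sym(Y) \xrightarrow{\sim} \stab_{\Sym(X)}(i)$ of \Cref{lem:injective-homomorphism}, so it is an injective homomorphism; and since an isomorphism sends $\langle S \rangle$ to $\langle f(S) \rangle$, part (i) gives $\tilde f(\mathbf{H}_j(G)) = \langle f(\E^j_G \cup \Aut(G)) \rangle = \langle \E^i_{G^\dagger} \rangle = \Fer^i(G^\dagger)$, so $\tilde f$ is onto. For (iii) I would use that $\tilde f$ is an isomorphism, so $|\mathbf{H}_j(G)| = |\Fer^i(G^\dagger)|$, together with the containments $\mathbf{H}_j(G) \le \Sym(Y)$ and $\Fer^i(G^\dagger) \le \stab_{\Sym(X)}(i) \cong S_{|Y|}$, both groups of order $|Y|! = (n(G^\dagger) - 1)!$. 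Then $G$ hang-symmetric at $v_j$, i.e.\ $\mathbf{H}_j(G) = \Sym(Y)$, forces $|\Fer^i(G^\dagger)| = |Y|!$, so $\Fer^i(G^\dagger)$ fills its stabilizer and is $\cong S_{n(G^\dagger) - 1}$; conversely $G^\dagger$ stem-symmetric at $v_i$ gives $|\mathbf{H}_j(G)| = |Y|!$, forcing $\mathbf{H}_j(G) = \Sym(Y)$ by the same order/subgroup argument. This closes the equivalence.
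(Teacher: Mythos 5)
Your proposal is correct and follows essentially the same route as the paper: the forward inclusion via \Cref{lemma:extends} for $\mathcal{E}^j_G$ and a direct computation of the leaf-moving replacement for $\Aut(G)$, surjectivity via a case analysis on the defining replacement driven by the degree-$1$ constraint at $v_i$, and then (ii)--(iii) by transporting generators along the isomorphism of \Cref{lem:injective-homomorphism}. The only cosmetic differences are your use of $\sigma^{-1}(j)$ in place of the paper's $\tau(j)$ for the target of the leaf edge (which is, if anything, more consistent with the stated definition of $G_\sigma$) and an order-counting phrasing of (iii) in place of the paper's direct equality of groups; neither changes the substance.
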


\begin{proof}[ Proof of item $i)$]
Let $X = Y \setminus \{i\}$.  \Cref{lem:injective-homomorphism} states that $f$ maps $\Sym(X)$ injectively and surjectively onto $\on{Stab}_{\Sym(Y)}(i)$. This means that every $\tau \in \on{Stab}_{\Sym(Y)}(i)$ may be written $\tau = f(\sigma)$ for a \textit{unique} $\sigma \in \Sym(X)$. In the sequel, we will write $f(\sigma) \in \on{Stab}_{\Sym(Y)}(i)$ to denote an arbitrary element of $\on{Stab}_{\Sym(Y)}(i)$, as every such element may be written this way.

Let $S = \mathcal{E}_{G}^j \cup \Aut(G) \subset \Sym(X)$. Since every permutation in $\mathcal{E}_{G^\dagger}^i$ fixes the label $i$, then $\mathcal{E}_{G^\dagger}^i \subset \on{Stab}_{\Sym(Y)}(i)$. Our goal is to show that $f$ establishes a bijection between $S$ and $\mathcal{E}_{G^\dagger}^i$, however \Cref{lem:injective-homomorphism} states that $f$ is already bijective between the supersets $\Sym(X) \supset S$ and $\on{Stab}_{\Sym(Y)}(i) \supset \mathcal{E}_{G^\dagger}^i$. Thus, to establish that $f$ maps bijectively from $S$ to $\mathcal{E}_{G^\dagger}^i$, it suffices to show that $f(S) = \mathcal{E}_{G^\dagger}^i$, which is equivalent to proving that 

\begin{center}
  $\sigma \in S  $ if and only if $f(\sigma) \in \mathcal{E}_{G^\dagger}^i.$   
\end{center}

Suppose $\tau \in S$. Then either $\tau \in \mathcal{E}_{G}^j$ or $\tau \in \Aut(G)$. If $\tau \in \mathcal{E}_{G}^j$, then \Cref{lemma:extends} states that $f(\tau) = \tau \cup \id_{\Fer(G^\dagger \setminus G)} \in \mathcal{E}^j_{G^\dagger}$. Note that, $f(\tau) = \tau \cup \id_{\Fer(G^\dagger \setminus G)}$ must fix the label $i$. Thus, $f(\tau) \in \mathcal{E}_{G^\dagger}^i$.

Now, suppose $\tau \in \Aut(G)$. Note that since $v_i$ is a leaf, then there is a splitting $G^\dagger = G \cup U + ij$, where $U = G^\dagger \setminus G$ is the graph on the single vertex $v_i$. Then $G_{\tau} = G$ since $\tau$ is an automorphism of $G$. If we put $\tau(j) = k$, we may calculate
\begin{align*}
    G^\dagger_{f(\tau)} &= (G \cup U + ij)_{f(\tau)} = G_{\tau} \cup U_{\id_{\Fer(U)}} + f(\tau)(i)f(\tau)(j) = G \cup U + i\tau(j)\\
    &= G \cup U + ij - ij + i\tau(j) = G^\dagger - ij + ik.
\end{align*}

This implies that 
$f(\tau)=\tau \cup id_{\Fer(G^\dagger\setminus G)}$ is a permutation associated to the feasible edge-replacement $ij\to ik$ in $G^\dagger$, which means that $f(\tau) \in \mathcal{E}_{G^\dagger}^i$. Thus, in every case, we have the implication $\tau \in S \implies f(\tau) \in \mathcal{E}_{G^\dagger}^i$.\\

Now, suppose that $f(\mu) \in \mathcal{E}_{G^\dagger}^i$ is an arbitrary element of $\mathcal{E}_{G^\dagger}^i$. Then there exists an edge-replacement $ab \to cd$ such that $G_{f(\mu)} = G^\dagger - v_av_b + v_cv_d$. Since $v_i$ is a leaf of $G^\dagger$, then $E(G^\dagger) = E(G) \sqcup \{ij\}$. First, suppose that $ab = ij$. Then $cd = ik$ for some $k \in X$, otherwise we isolate the vertex $v_i$, and therefore move the label $i$, which contradicts $f(\mu) \in \mathcal{E}_{G^\dagger}^i$. The vertex which receives the label $i = f(\mu)(i)$ must be connected to the vertex which receives the label $f(\mu)(j)$. Since (once we perform the edge-replacement) the vertex $v_i$ is only connected to the vertex $v_k$, we must have $k = \lambda(v_k) = f(\mu)(j) = \mu(j)$. Now, we compute
\begin{align*}
    G_\mu \cup U + ik &= G_\mu \cup U_{\id_{\Fer(G^\dagger \setminus G)}} + f(\mu)(i)f(\mu)(j) = (G \cup U + ij)_{f(\mu)}\\
    &= G^\dagger_{f(\mu)} = G^\dagger - ij + ik = (G \cup U + ij) - ij + ik = G \cup U + ik.
\end{align*}
When we restrict our vertices to the set $V(G)$ on each side of this equation, we get $G_\mu = G$. Thus, $\mu \in \Aut(G) \subseteq S$, and so $f(\mu) \in \mathcal{E}_{G^\dagger}^i $ implies that $ \mu \in S$ if $ab = ij$.

For the second case, suppose that $ab \neq ij$. Since $E(G^\dagger) = E(G) \sqcup \{ij\}$, then $ab \in E(G)$. Moreover, we claim $c, d \in V(G)$. Otherwise, we must have $c = i$ without loss of generality. Then once we perform the edge-replacement $ab \to id$, the label $i$ is not moved, so we have added an edge to the vertex $v_i$ without taking one away. Thus, $v_i$ is not a leaf anymore, so it must move, contradicting $f(\mu) \in \mathcal{E}_{G^\dagger}^i$. It follows that $c, d \neq i$, and so $c, d \in V(G)$. 

Since $ab \in E(G)$ and $c, d \in V(G)$, then we may consider $ab \to cd$ as an edge-replacement on $G$ instead of an edge-replacement on $G^\dagger$. Moreover, when we perform the edge-replacement $ab \to cd$ and relabel, the label $i$ is fixed, and the edge $ij$ is not moved, so $v_j$ must receive the label $j$. Thus, $\mu(j) = f(\mu)(j) = j$. Now, we can calculate \begin{align*}
    G_\mu \cup U + ij &= G_\mu \cup U_{\id_{\Fer(G^\dagger \setminus G)}} + f(\mu)(i)f(\mu)(j) = (G \cup U + ij)_{f(\mu)}\\
    &= G^\dagger_{f(\mu)} = G^\dagger - ab + cd = (G \cup U + ij) - ab + cd\\
    &= (G - ab + cd) \cup U + ij.
\end{align*}
When we restrict our vertices to the set $V(G)$ on each side of this equation, we get $G_\mu = G - ab + cd$. Since $\mu(j) = j$, we have $\mu \in \mathcal{E}_{G} \subseteq S$. Thus, we have proved that for every case, $f(\mu) \in \mathcal{E}_{G^\dagger}^j$ implies that $\mu \in S$. Since we already proved the converse statement, we have $\sigma \in S $ if and only if $ f(\sigma) \in \mathcal{E}_{G^\dagger}^i$, and so we achieve the desired bijection, proving item i).

\textit{Proof of item ii).}
The bijection $f: \mathcal{E}_{G}^j \cup \Aut(G) \to \mathcal{E}_{G^\dagger}^i$ extends to a group isomorphism $\Sym(X \setminus \{i\}) \to \on{Stab}_{\Sym(X)}(i)$, so the subgroups generated by $\mathcal{E}_{G}^j \cup \Aut(G)$ and $\mathcal{E}_{G^\dagger}^i$ are isomorphic.

\textit{Proof of item iii).}
The graph $G^\dagger$ is stem-symmetric at $v_i$ if and only if $\Fer^i(G^\dagger) = \on{Stab}_{\Sym(X)}(i)$. This is true if and only if $\mathbf{H}_j(G) = \Sym(X \setminus \{i\})$, which is the case if and only if $G$ is hang-symmetric.
\end{proof}

\Cref{prop:hang-group} justifies the name \textit{hang group}, as the graph $G$ can be thought of as hanging from the vertex $v_i$ in $G^\dagger$. 

\section{Stem-/ Hang-Symmetry and Global Amoebas}~\label{sec: stem hang-sym global}
In this section, we provide characterizations when a graph $G^*$ is stem- and hang-symmetric  at a root. As a corollary, we obtain a novel characterization of global amoebas that supplements the list found in~\cite[Theorem 15]{caro2023graphs}.
\pagebreak

\begin{theorem}\label{thm:stem-sym-global}
    Let $G$ be a labeled graph rooted at a vertex labeled $i$ and let $X$ be the label set of $G^*$ with the new isolated vertex labeled $j$. Then the following are equivalent:
    \begin{enumerate}[label = (\alph*)]
        \item $G^*$ is stem-symmetric at $v_i$;
        \item $\Fer^i(G^*)$ acts transitively on $X \setminus \{i\}$.
        \item Every orbit of $\Fer^i(G)$ acting on $X \setminus \{i, j\}$ contains a leaf.
    \end{enumerate}
\end{theorem}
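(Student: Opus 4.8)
The plan is to prove the cyclic chain (a) $\Rightarrow$ (b) $\Rightarrow$ (c) $\Rightarrow$ (a). Note first that since $|X| = n(G)+1$, stem-symmetry at $v_i$ asserts $\Fer^i(G^*) \cong S_{n(G)}$, while $\Fer^i(G^*) \le \stab_{\Sym(X)}(i) \cong S_{n(G)}$; hence (a) is equivalent to $\Fer^i(G^*) = \stab_{\Sym(X)}(i)$. The implication (a) $\Rightarrow$ (b) is then immediate: the full point stabilizer acts as the whole symmetric group on $X \setminus \{i\}$, which is transitive.

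The conceptual core is (c) $\Rightarrow$ (a), which I would prove by generating every transposition through the isolated vertex $j$. First observe that for each leaf $v_\ell$ of $G$ with neighbour $v_m$ (and $\ell \neq i$), the replacement $\ell m \to j m$ is feasible in $G^*$: removing $\ell m$ isolates $v_\ell$ and attaching $v_j$ to $v_m$ yields a copy of $G^*$ in which the roles of $v_\ell$ and $v_j$ are swapped, so its associated permutation is exactly the transposition $(\ell\,j)$, which fixes $i$; hence $(\ell\,j) \in \E^i_{G^*} \subseteq \Fer^i(G^*)$. Next, every feasible replacement internal to $G$ remains feasible in $G^*$, giving a natural inclusion $\Fer^i(G) \hookrightarrow \Fer^i(G^*)$, $\sigma \mapsto \sigma \cup \id_{\{j\}}$, whose image fixes $j$ and whose action on $X \setminus \{i,j\}$ agrees with that of $\Fer^i(G)$ (a direct check, cf. \Cref{lemma:extends}). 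Now take any $k \in X \setminus \{i,j\}$. By (c) the $\Fer^i(G)$-orbit of $k$ contains a leaf $\ell$, so there is $\gamma \in \Fer^i(G) \subseteq \Fer^i(G^*)$ with $\gamma(k) = \ell$ and $\gamma(j) = j$; conjugating the transposition above gives $\gamma^{-1}(\ell\,j)\gamma = (k\,j) \in \Fer^i(G^*)$. Thus $(k\,j) \in \Fer^i(G^*)$ for every $k \in X \setminus \{i,j\}$, and since these transpositions generate $\Sym(X\setminus\{i\})$ we conclude $\Fer^i(G^*) = \stab_{\Sym(X)}(i)$, i.e.\ (a). The point of routing through $\gamma$ rather than an arbitrary transitive element is that $\gamma$ fixes $j$, so conjugation preserves the common swap-target $j$.

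The remaining implication (b) $\Rightarrow$ (c) is the main obstacle, since transitivity of a permutation group does not in general upgrade to the full symmetric group: the isolated vertex must be exploited to rule out imprimitive (wreath-type) behaviour. I would argue the contrapositive: assume some $\Fer^i(G)$-orbit $O \subseteq X \setminus \{i,j\}$ contains no leaf, and show $O$ is invariant under all of $\Fer^i(G^*)$, so that $O$ and the orbit of $j$ are disjoint and $\Fer^i(G^*)$ is intransitive. The generators of $\Fer^i(G^*)$ split into those internal to $G$, which act through $\Fer^i(G)$ on the labels of $G$ and therefore permute its orbits and preserve $O$, and those whose added edge is incident to $v_j$. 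For the latter, an isolated-vertex count shows the removed edge must be a pendant edge: attaching $v_j$ raises its degree from $0$ to $1$, so to recover the single isolated vertex of $G^*$ the replacement has to free exactly one new isolated vertex by deleting a leaf's edge. Consequently such a replacement merely relocates the leaf/isolated role, and, together with the constraint that it fix $i$, acts on $X \setminus \{i,j\}$ as a composite of leaf-to-leaf and leaf-to-isolated moves. The technical heart is to verify rigorously that this forces it to preserve the partition of $X \setminus \{i,j\}$ into leaf-containing and leaf-free $\Fer^i(G)$-orbits, and in particular to fix $O$ setwise; granting this, leaf-free orbits can never merge with $j$, so (b) fails, completing the contrapositive.
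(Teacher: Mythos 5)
Your implications (a)$\Rightarrow$(b) and (c)$\Rightarrow$(a) are correct and essentially identical to the paper's argument: the paper likewise extracts the transposition $(j\,\ell)$ from the pendant replacement $m\ell \to mj$ and conjugates by an element of $\Fer^i(G) \le \Fer^i(G^*)$ to obtain every $(j\,k)$. The problem is (b)$\Rightarrow$(c). You recast it as a contrapositive and reduce it to the claim that every generator of $\E_{G^*}^i$ whose added edge is incident to $v_j$ preserves each leaf-free $\Fer^i(G)$-orbit $O$ setwise --- and then write ``granting this.'' That clause is the entire content of the implication, and nothing you have established grants it. Observe that such a generator $\sigma$, associated to $b\ell \to cj$, does not even stabilize $X \setminus \{i,j\}$: it sends the leaf label $\ell$ to $j$ and sends $j$ to the label of a leaf, so ``acts as a composite of leaf-to-leaf and leaf-to-isolated moves'' is a description, not a proof. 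What is actually required is the factorization that constitutes the bulk of the paper's proof: a direct computation on $G^*$ showing that $(j\,\ell)\sigma$ is associated to the feasible edge-replacement $b\ell \to c\ell$, which is internal to $G$, fixes $i$, and sends $j$ to an isolated label; from this one concludes that $(j\,\ell)\sigma$ preserves every $\Fer^i(G)$-orbit, and since neither $j$ nor the leaf label $\ell$ can lie in a leaf-free orbit $O$, post-composing with $(j\,\ell)$ still fixes $O$ setwise. Without carrying out this computation (which the paper organizes instead as a direct surgery replacing each $\sigma_m$ in a minimal sequence by a $\tau_m \in \E_G^i$), your (b)$\Rightarrow$(c) is a restatement of the goal rather than a proof of it.

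A further sign that the deferred verification is not a formality: the claim ``$\sigma$ fixes the partition into leaf-containing and leaf-free orbits'' is false as stated once $G$ has isolated vertices, since then $\Aut(G^*) \subseteq \E_{G^*}^i$ contains the transposition swapping $j$ with an isolated label $p$ of $G$, and a leaf-free orbit containing $p$ merges with the orbit of $j$. If you keep the contrapositive formulation you must either rule this configuration out or handle it separately; the paper's direct argument avoids naming such orbits because it only ever tracks labels $k_m$ that are not isolated. So the structure of your proof is viable and arguably cleaner than the paper's, but the hardest step is asserted rather than proved, and proving it requires exactly the $(j\,\ell)\sigma$ computation you have omitted.
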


\begin{proof}
Immediately (a) implies (b) so assume (b) to prove (c). 

\noindent $(b)\implies (c)$. By assumption (b), for every $k \in X \setminus \{i, j\}$, there is a sequence of permutations in $\E_{G^*}^i$ whose composition takes $k$ to the label of an isolated vertex of $G^*$. Let $ \sigma_1, \dots, \sigma_n$ be such a sequence with minimum length and let $k_m := \sigma_m\sigma_{m-1}\cdots\sigma_1(k)$ for any $m\leq n$. Note that $k_n$ results in the label of an isolated vertex; by minimality, for $m < n$, $k_m = \sigma_m\sigma_{m-1}\cdots\sigma_1(k)$ is not an isolated vertex. Thus, for any $m < n$, we have that $k_m \in X \setminus \{i, j\}$ since $j$ labels an isolated vertex and $X \setminus \{i\}$ is stable under the action of $\Fer^i(G^*)$.

Since any member of $\E_{G^*}^i$ can only add $1$, $-1$, or $0$ to the degree of any vertex it moves, $k_{n-1}$ must be the label of a leaf since $n$ is minimal. Then $\sigma_1,\ldots,\sigma_{n-1}$ is a sequence of permutations in $\E_{G^*}^i$ that takes $k$ to the label of a leaf. However, we wish to find such a sequence of permutations $\tau_1, \dots, \tau_n \in \E_G^i$ 
that takes $k$ to the label of a leaf. To this end, we must find a way to replace $\sigma_m$'s that move $j$ with permutations that do not move $j$, and send $k_m$ to $k_{m+1}$ for $m < n-1$.\\

\noindent If $\sigma_m$ does not move $j$, then $\sigma_m\in \E^i_G$. Take $\tau_m := \sigma_m$ and note that $\tau_m(k_m)=\sigma_m(k_m)$. \\

\noindent On the other hand, say $\sigma_m$ moves $j$, and let $a = \sigma_m(j)$. 
If $a$ labels an isolated vertex, then $(j \; a)$ is an automorphism of $G^*$ fixing $i$ and $k_{m+1}$; note that $m+1 < n$ so  $k_{m+1}$ is not the label of an isolated vertex. Since $\E_{G^*}$ is closed under left-multiplication by $\Aut(G^*)$, $(j \; a) \sigma_m \in \E_{G^*}$. Moreover, $\sigma_m$ and $(j \; a)$ both fix $i$ so $(j \; a) \sigma_m \in \E_{G^*}^i$. Taking $\tau_m = (j \; a) \sigma_m$, observe that $\tau_m(j) = j$ so $\tau_m\in \E^i_G$; by construction, $\tau_m(k_m) = \sigma(k_m)$.  

If $a$ labels a leaf, the edge-replacement associated to $\sigma_m$ must create a new isolated vertex since $j$ is mapped by $\sigma_m$ to the label of a leaf. Therefore, this edge-replacement has the form $b\ell \to cj$, where $\ell$ is the label of a leaf. Let $G'$ be the graph $G$ with the leaf $\ell$ trimmed so that $G = G' \cup \{\ell\} + b\ell$. A direct calculation shows that
\begin{align*}
    G^* - b\ell + c\ell &= G \cup \{j\} - b\ell + c\ell\\
    &= G' \cup \{\ell\} \cup \{j\} + b\ell - b\ell + c\ell\\
    &= G' \cup \{\ell\} \cup \{j\} + c\ell\\
    &= (j \; \ell)(G' \cup \{j\} \cup \{\ell\} + cj)\\
    &= (j \; \ell)(G' \cup \{j\} \cup \{\ell\} + b\ell - b\ell + cj)\\
    &= (j \; \ell)(G \cup \{j\} - b\ell + cj)\\
    &= (j \; \ell)(G^* - b\ell + cj)\\
    &= (j \; \ell)\sigma_m(G^*)
\end{align*}

Therefore, $b\ell \to c\ell$ is a feasible edge-replacement in $G^*$ and consequently in $G$; since this edge-replacement does not increase the degree of $j$, $(j \; \ell)\sigma_m$ must take $j$ to some isolated vertex. If, $k_{m+1} = \ell$, $(j \; \ell)\sigma_m(k_m) = (j \; \ell)(k_{m+1}) = (j \; \ell) (\ell) = j$, and so $k_m$ is already the label of a leaf, and so $\tau_1, \dots, \tau_{m-1}$ is a sequence of permutations in $\E_G^i$ that takes $k$ to a label of a leaf, namely $k_m$. If $\ell \neq k_m$, then $(j \; \ell)\sigma_m(k_m) = (j \; \ell)k_{m+1} = k_{m+1}$, and so we may apply the logic in the case that $\sigma_m$ does \textit{not} increase the degree of $j$ to get a permutation $\tau_m$ that sends $k_m$ to $k_{m+1}$, fixes $j$, and fixes $i$.

\noindent By construction $\tau_m(k_m) = \sigma_m(k_m) = k_{m+1}$ and each $\tau_m$ is in $\E_G^i$ for all $m$. So $\tau_{n-1}\cdots \tau_1(k) = k_{n-1}$, which is a label of a leaf in the orbit of $k$ in $\Fer^i(G)$.\\ 

\noindent $(c)\implies (a)$. Finally assume (c) to prove (a), i.e., every orbit of $\Fer^i(G)$, except the singleton orbit $\{i\}$, contains a label of a leaf. It suffices to prove that the transposition $(k \; j)$ is in $\Fer^i(G^*)$ for any $k \in X \setminus \{i, j\}$. Suppose $k \in X \setminus \{i, j\}$. Then, there is some label $\ell$ of a leaf $v_{\ell}$ and some $\sigma \in \Fer^i(G) \leq \Fer^i(G^*)$ such that $\sigma(k) = \ell$. Let $m$ be the label of the unique neighbor of $v_{\ell}$. By performing the feasible edge-replacement $m\ell  \to mj$, we see that $(j \; \ell) \in \Fer^i(G^*)$. Thus, $(j \; k) = \sigma^{-1} (j \; \ell) \sigma \in \Fer^i(G)$ as desired. Since every transposition $(k \; j)$ lies in $\Fer^i(G^*)$ for $k \in X \setminus \{i, j\}$, then $\Sym(X \setminus \{i\}) \leq \Fer^i(G^*)$, and so $G^*$ is stem-symmetric at $v_i$.  
\end{proof}

From this theorem, we derive a useful result:

\begin{corollary}\label{cor:hang-sym-for-global-amoeba}
Let $G$ be a labeled graph rooted at label $i$.
The following are equivalent:
\begin{enumerate}[label = (\alph*)]
    \item $G^*$ is hang-symmetric at $i$;
    \item $\bold{H}_i(G^*)$ acts transitively on the label set of $G^*$.
    \item Every orbit of $\bold{H}_i(G)$, acting on the set of labels of $G$, contains a leaf with label not equal to $i$.
\end{enumerate}
\end{corollary}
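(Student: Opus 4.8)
The plan is to deduce the entire three-way equivalence from \Cref{thm:stem-sym-global} by changing the base graph, using \Cref{prop:hang-group} as a dictionary between hang-symmetry and stem-symmetry. First I would attach a leaf $v_\ell$ to the root $v_i$ of $G$ to form $G^\dagger$, rooted at $v_\ell$, and record the key identity $(G^*)^\dagger = (G^\dagger)^*$ (as rooted graphs): both are obtained from $G$ by attaching a pendant leaf $\ell$ at the root $i$ together with one isolated vertex $j$, and both are rooted at $\ell$. This identity lets me run \Cref{thm:stem-sym-global} with $G^\dagger$ playing the role of the base graph and $\ell$ playing the role of its root, and then pull each of the theorem's three conditions back to a statement about $G$ and $G^*$.

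For condition (a), applying \Cref{prop:hang-group} part iii) to $G^*$ (root $i$, new leaf $\ell$) shows that $G^*$ is hang-symmetric at $v_i$ if and only if $(G^*)^\dagger$ is stem-symmetric at $v_\ell$; by the identity above this is precisely condition (a) of \Cref{thm:stem-sym-global} for $G^\dagger$. The isomorphisms from \Cref{prop:hang-group} part ii) then handle the transitivity and orbit conditions. Applied to $G^*$, it gives $\tilde f : \mathbf{H}_i(G^*) \to \Fer^\ell((G^\dagger)^*)$, $\sigma \mapsto \sigma \cup \id$; since adjoining the identity on $\ell$ leaves the action on all other labels unchanged (\Cref{lem:injective-homomorphism}), $\mathbf{H}_i(G^*)$ is transitive on the labels of $G^*$ exactly when $\Fer^\ell((G^\dagger)^*)$ is transitive on those same labels, i.e.\ on all labels of $(G^\dagger)^*$ except $\ell$ --- condition (b) of the theorem. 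Applied to $G$, the same part gives an action-preserving isomorphism $\mathbf{H}_i(G) \to \Fer^\ell(G^\dagger)$, so the orbits of $\mathbf{H}_i(G)$ on the labels of $G$ correspond bijectively to the orbits of $\Fer^\ell(G^\dagger)$ on the labels of $G^\dagger$ other than $\ell$, which is exactly the orbit set appearing in condition (c) of the theorem. Since (a), (b), (c) are already equivalent for $G^\dagger$, chaining these three correspondences yields the equivalence claimed in the corollary.

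The step I expect to require the most care is the leaf bookkeeping inside condition (c). I would verify that passing from $G$ to $G^\dagger$ changes only the degree of the root, so that for every label $x \neq i$ the vertex $v_x$ is a leaf of $G^\dagger$ if and only if it is a leaf of $G$; with this, ``every orbit contains a leaf of $G^\dagger$'' translates to ``every orbit contains a leaf of $G$ with label $\neq i$'', matching the corollary. The one delicate point is the root itself, which becomes a leaf of $G^\dagger$ precisely when $\deg_G(i) = 0$; I would check that the restriction ``with label not equal to $i$'' correctly encodes the exclusion of the root, and treat the degenerate case of an isolated root on its own.
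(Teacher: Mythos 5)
Your proposal follows essentially the same route as the paper: it uses the identity $(G^*)^\dagger = (G^\dagger)^*$, translates via \Cref{prop:hang-group} so that $\mathbf{H}_i(G) = \Fer^\ell(G^\dagger)$ and $\mathbf{H}_i(G^*) = \Fer^\ell((G^\dagger)^*)$, and then invokes \Cref{thm:stem-sym-global} with $G^\dagger$ as the base graph. Your attention to the leaf bookkeeping in condition (c) --- in particular the degenerate case $\deg_G(i)=0$, where the root becomes a leaf of $G^\dagger$ without being a leaf of $G$ --- is in fact more careful than the paper, which passes over this point with the bare assertion that the leaves of $G^\dagger$ are exactly the leaves of $G$ other than the newly added one.
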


\begin{proof}
Consider the graph $(G^\dagger)^*$. Note that $(G^\dagger)^* = (G^*)^\dagger$, and label the new leaf with the label $\ell$. Then by \Cref{prop:hang-group}, $\bold{H}_i(G) = \Fer^\ell(G^\dagger)$ and $\bold{H}_i(G^*) = \Fer^\ell(G^{*\dagger}) = \Fer^\ell(G^{\dagger *})$. Therefore, \Cref{thm:stem-sym-global} implies that $\bold{H}_i(G^*)$ is the full symmetric group if and only if it acts transitively on the label set of $G^*$. This occurs if and only if $\bold{H}_i(G)$ contains a label of a leaf of $G^\dagger$ in every orbit (except for the singleton $\{\ell\}$). Note that the labels of the leaves of $G^\dagger$ are exactly the labels of the leaves of $G$ which are not equal to $\ell$. This gives the result.
\end{proof}

From Theorem~\ref{thm:stem-sym-global}, we also obtain the following addition to the list of characterizations of global amoebas by Caro et al. In particular, note that a priori transitivity is a weaker condition than ~\cite[Theorem 15 (iii)]{caro2023graphs}.

\begin{corollary}~\label{cor: global amoeba = transitive}
Let $G$ be a graph. Then $G$ is a global amoeba if and only if $\Fer(G^*)$ acts transitively on its set of labels.
\end{corollary}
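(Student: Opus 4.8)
The plan is to reduce the global amoeba property to stem-symmetry of an auxiliary graph, and then invoke \Cref{thm:stem-sym-global} to convert stem-symmetry into a transitivity statement. Recall that by definition $G$ is a global amoeba precisely when $G \cup K_1 = G^*$ is a local amoeba, i.e. when $\Fer(G^*) = \Sym(X)$ where $X$ is the label set of $G^*$. So the entire content of the corollary is the equivalence
\[
\Fer(G^*) = \Sym(X) \iff \Fer(G^*) \text{ acts transitively on } X.
\]
One direction is trivial: the full symmetric group certainly acts transitively. The forward-looking work is the converse, showing that transitivity of $\Fer(G^*)$ already forces it to be the \emph{entire} symmetric group — the ``generically intransitive'' phenomenon advertised in the introduction.

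For the converse, first I would observe that $G^*$ has an isolated vertex (the adjoined $K_1$), so I may root $G^*$ at that isolated vertex; call its label $j$. The key structural input is that an isolated vertex is an especially malleable root. My plan is to show that transitivity of $\Fer(G^*)$ on $X$ implies $G^*$ is stem-symmetric at $v_j$, which by definition means $\Fer^j(G^*) \cong S_{|X|-1} = \Sym(X \setminus \{j\})$; combined with a permutation in $\Fer(G^*)$ that moves $j$ (which transitivity supplies), this upgrades $\Fer(G^*)$ to all of $\Sym(X)$. To get stem-symmetry at the isolated root $j$, I would apply \Cref{thm:stem-sym-global} with the roles set up so that $G^*$ plays the role of the ``$G^*$'' in that theorem — that is, I treat $G^*$ as $H^*$ for the graph $H = G$ rooted at the isolated vertex. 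Then condition (b) of \Cref{thm:stem-sym-global}, namely that $\Fer^j(H^*)$ acts transitively on $X \setminus \{j\}$, needs to be derived from the global (non-stabilizer) transitivity of $\Fer(G^*)$ on $X$.

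The main obstacle, and the step I would spend the most care on, is precisely this passage from transitivity of the full group $\Fer(G^*)$ to transitivity of the \emph{stabilizer} $\Fer^j(G^*)$ (or equivalently the point-stabilizer generating set $\E^i$ from condition (c)). A permutation in $\Fer(G^*)$ that relates two labels $k, k' \in X \setminus \{j\}$ may well move the isolated label $j$ along the way, so it need not lie in $\Fer^j(G^*)$. The technique to resolve this is exactly the ``detour through an isolated vertex'' argument used in the proof of \Cref{thm:stem-sym-global}: whenever a generator $\sigma \in \E_{G^*}$ moves $j$, I correct it by post-composing with a transposition $(j\ a)$, where $a = \sigma(j)$, using that $\E_{G^*}$ is closed under left-multiplication by $\Aut(G^*)$ and that swapping two isolated vertices (or trimming a leaf onto the spare isolated label) is an automorphism or a feasible replacement fixing $j$. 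In fact, rather than re-deriving this, the cleaner route is to feed the hypothesis directly into the equivalence (a)$\Leftrightarrow$(c) of \Cref{thm:stem-sym-global}: transitivity of $\Fer(G^*)$ on $X$ ensures every label can be sent to the isolated vertex, and a degree-counting argument (each feasible replacement changes a vertex's degree by at most one) shows that the last non-isolated step lands on a leaf, so every orbit of the point-stabilizer on $X \setminus \{i, j\}$ contains a leaf — which is condition (c). Thus $G^*$ is stem-symmetric, and the corollary follows.

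Concretely, then, the steps in order are: (1) unwind the definition to reduce to the equivalence displayed above, with the nontrivial direction being transitivity $\Rightarrow$ fullness; (2) root $G^*$ at its isolated vertex $j$ and use transitivity of $\Fer(G^*)$ together with the degree-change-by-one observation to verify condition (c) of \Cref{thm:stem-sym-global}; (3) conclude via (c)$\Rightarrow$(a) that $G^*$ is stem-symmetric at $v_j$, so $\Fer^j(G^*) = \Sym(X \setminus \{j\})$; (4) finally note that transitivity gives some element of $\Fer(G^*)$ not fixing $j$, and a symmetric group on $|X|-1$ points together with one extra point-moving permutation generates all of $\Sym(X)$, whence $\Fer(G^*) = \Sym(X)$ and $G$ is a global amoeba.
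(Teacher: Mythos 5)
Your overall strategy --- reduce to the equivalence ``$\Fer(G^*)$ transitive $\iff$ $\Fer(G^*)=\Sym(X)$'' and convert it into an instance of \Cref{thm:stem-sym-global} --- is the right one, and it is what the paper does. But there is a genuine gap in how you invoke that theorem. You want to apply it to $G^*$ rooted at its own isolated vertex $v_j$, ``treating $G^*$ as $H^*$ for $H=G$ rooted at the isolated vertex.'' This does not typecheck: in \Cref{thm:stem-sym-global} the root $i$ must be a vertex of the \emph{un-starred} graph, and the isolated vertex $j$ is a separate, newly adjoined vertex. The isolated vertex of $G^*$ does not lie in $G$, so you cannot root $H=G$ there; and if $G$ has no isolated vertices you cannot write $G^*=H^*$ with $v_j\in V(H)$ at all. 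The paper's proof fixes exactly this by passing to $G^{**}$: with \emph{two} isolated vertices available, one can view $G^{**}$ as $H^*$ for $H=G\cup\{v_j\}$ rooted at $v_j$, and --- crucially --- one checks that $\E_{G^*}=\E^j_{G^{**}}$, hence $\Fer(G^*)=\Fer^j(G^{**})$. At that point the equivalence (a)$\Leftrightarrow$(b) of \Cref{thm:stem-sym-global} applied to $G^{**}$ says directly that this group is transitive if and only if it is the full symmetric group, and the corollary follows with no further work.

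Because you do not make this move, you are forced into the ``full group versus point stabilizer'' problem, which you correctly flag as the main obstacle but do not actually resolve. Transitivity of $\Fer(G^*)$ on $X$ does not imply transitivity of a point stabilizer on the remaining labels (that is $2$-transitivity), and the correction argument you borrow from the proof of \Cref{thm:stem-sym-global} is not designed for this descent: there it converts a word of generators that \emph{already} fix the root $i$ (elements of $\E^i_{G^*}$) into a word avoiding the isolated vertex (elements of $\E^i_G$), exploiting that $j$ labels an isolated vertex and that a ``spare'' isolated label is available to swap through. In your setting the word from transitivity need not fix any chosen root, and $G^*$ has no spare isolated label to detour through when $G$ has none. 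Even if you did establish that $\Fer^j(G^*)$ acts transitively on $X\setminus\{j\}$, you would still need \Cref{thm:stem-sym-global} to upgrade ``transitive'' to ``symmetric,'' and that again requires presenting the ambient graph as a rooted graph plus a \emph{separate} isolated vertex --- i.e., it requires $G^{**}$. Your final step (4), that $\Sym(X\setminus\{j\})$ together with one permutation moving $j$ generates $\Sym(X)$, is fine but becomes unnecessary once the identification $\Fer(G^*)=\Fer^j(G^{**})$ is in place.
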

\begin{proof}
Consider the graph $G^{**}$ with label set $X$ and give the newly-added isolated vertex the label $j.$ Note that the feasible edge-replacements on $G^*$ act trivially on the label $j$, i.e., $\E_{G^*} \subseteq \E_{G^{**}}^j$. On the other hand, let $ab \to cd$ be a feasible edge-replacement associated to some $\sigma \in \E_{G^{**}}^j$. Since $j$ is isolated, then neither $a$ nor $b$ are equal to $j$. Moreover, neither $c$ nor $d$ can be equal to $j$, otherwise we move the label $j$. Thus, $ab \to cd$ is a feasible edge-replacement of $G^*$, and so $\sigma \in \E_{G^*}$. Thus $\E_{G^{**}}^j \subseteq\E_{G^*}$. 

Since the generating sets are equal, we have that $\Fer(G^*) = \Fer^j(G^{**})$. Now, \Cref{thm:stem-sym-global} states that the latter group is transitive if and only if it is the full symmetric group, and so this also holds for the former group. The result follows.
\end{proof}

\section{Comb Products of Amoebas}
\label{sec: Comb Product of Amoebas}

In this section, we study a way to combine local amoebas to achieve new local amoebas via the \emph{comb product}. This was studied in \cite[Theorem 3.10]{hansberg2021recursive}, where it was proved that the comb product $G * H$ of a nonempty global amoeba $G$ and a double-rooted global amoeba $H$ is, again, a global amoeba. We extend their result by proving that the comb product of two local amoebas is, under certain conditions, a local amoeba. 

In our study of local amoebas, it will be necessary to pursue more techniques from the field of permutation groups. See \nameref{appendix: wreath} for a brief discussion of block systems, primitive/imprimitive groups, and wreath products. A particularly relevant wreath product is $S_m \wr S_n$, which we let act on a set $B \times X$ using the imprimitive action where $|B| = m$ and $|X| = n$. We note the following fact about this group:

\begin{lemma}[\cite{DM96}]\label{Sm_wr_Sn_is_maximal}
Let $S_m$ act on a set $B$ and let $S_n$ act on a set $X$ both in the usual way, where $m, n \geq 2$. Then, the group $S_m \wr S_n$ is the largest subgroup of $S_{mn}$ which has $\{B \times \{x\} \mid x \in X\}$ as a block system, and moreover $S_m \wr S_n$ is itself a maximal subgroup of $S_{mn}$.
\end{lemma}

Our goal is to leverage the maximality of $S_m \wr S_n$ inside $S_{mn}$ to get a local amoeba construction using \Cref{Sm_wr_Sn_is_maximal}.  We now state the definition of comb product, which was originally used in \cite{hansberg2021recursive} in the setting of global amoebas. Let $G$ and $H$ be graphs, where $H$ has a root $v$. The \emph{comb product} $G * H$ is the graph with vertex set $V(G) \times V(H)$ constructed in the following way. For each edge $uu' \in E(G)$, place an edge between $(u, v)$ and $(u', v)$ in $G * H$.  For each $u \in V(G)$ and edge $ww' \in E(H)$, place an edge between $(u, w)$ and $(u, w')$ in $G * H$. In other words, to each vertex in $G$, glue a copy of $H$ by its root $v$. An example is provided in \Cref{fig:comb_product_example}. Given label sets $X$ and $B$ on graphs $G$ and $H$, respectively, we naturally obtain a label set $B\times X$ on the comb product $G\ast H$. Since our aim is to analyze the comb product of local amoebas, we begin by examining how this construction affects the feasible edge-replacement groups of the involved graphs. To that end, we now prove a technical lemma, followed by a sequence of corollaries that illustrate its implications in the context of local and global amoebas. See \Cref{fig:wreath_edge_replacements} for an example of the edge-replacements used in \Cref{lem:big_lemma}.

\begin{figure}[H]
    \centering

\begin{tikzpicture}
    \node (0) at (0, 0) {};
    \node (1) at (1.299,-0.75) {};
    \node (2) at (1.299,0.75) {};
    \node (3) at (-1.5,0) {};

    \draw (3) -- (0) -- (1) -- (2) -- (0);

    \draw[fill=white] (0) circle(0.25);
    \draw[fill=white] (1) circle(0.25);
    \draw[fill=white] (2) circle(0.25);
    \draw[fill=white] (3) circle(0.25);
\end{tikzpicture} \hspace{2cm}
\begin{tikzpicture}
\node (c0) at (0, 0) {};
\node (c1) at (1.5, 0) {};
\node (c2) at (3, 0) {};
\node (c3) at (0, -1.5) {};
\node (c4) at (1.5, -1.5) {};
\node (c5) at (3, -1.5) {};

\draw (c0) -- (c1) -- (c2);
\draw (c0) -- (c3);
\draw (c1) -- (c4);
\draw (c2) -- (c5);

\draw[fill=white!50!red] (c0) circle(0.25);
\draw[fill=white] (c1) circle(0.25);
\draw[fill=white] (c2) circle(0.25);
\draw[fill=white] (c3) circle(0.25);
\draw[fill=white] (c4) circle(0.25);
\draw[fill=white] (c5) circle(0.25);
\end{tikzpicture}

\begin{tikzpicture}

    \node (c0) at (0, 0) {};
    \node (c1) at (0, -1) {};
    \node (c2) at (0, -2) {};
    \node (c3) at (-1, -0.5) {};
    \node (c4) at (-1, -1.5) {};
    \node (c5) at (-1, -2.5) {};
    
    \draw (c0) -- (c1) -- (c2);
    \draw (c0) -- (c3);
    \draw (c1) -- (c4);
    \draw (c2) -- (c5);

    \node (d0) at (2, 0) {};
    \node (d1) at (2, -1) {};
    \node (d2) at (2, -2) {};
    \node (d3) at (1, -0.5) {};
    \node (d4) at (1, -1.5) {};
    \node (d5) at (1, -2.5) {};
    
    \draw (d0) -- (d1) -- (d2);
    \draw (d0) -- (d3);
    \draw (d1) -- (d4);
    \draw (d2) -- (d5);

    \node (e0) at (4, 0) {};
    \node (e1) at (4, -1) {};
    \node (e2) at (4, -2) {};
    \node (e3) at (3, -0.5) {};
    \node (e4) at (3, -1.5) {};
    \node (e5) at (3, -2.5) {};
    
    \draw (e0) -- (e1) -- (e2);
    \draw (e0) -- (e3);
    \draw (e1) -- (e4);
    \draw (e2) -- (e5);

    \node (f0) at (6, 0) {};
    \node (f1) at (6, -1) {};
    \node (f2) at (6, -2) {};
    \node (f3) at (5, -0.5) {};
    \node (f4) at (5, -1.5) {};
    \node (f5) at (5, -2.5) {};
    
    \draw (f0) -- (f1) -- (f2);
    \draw (f0) -- (f3);
    \draw (f1) -- (f4);
    \draw (f2) -- (f5);

    \draw (c0) -- (d0) -- (e0) -- (f0);
    \draw (c0) arc(135:45:2.828);

    \draw[fill=white!50!red] (c0) circle(0.25);
    \draw[fill=white] (c1) circle(0.25);
    \draw[fill=white] (c2) circle(0.25);
    \draw[fill=white] (c3) circle(0.25);
    \draw[fill=white] (c4) circle(0.25);
    \draw[fill=white] (c5) circle(0.25);
    
    \draw[fill=white!50!red] (d0) circle(0.25);
    \draw[fill=white] (d1) circle(0.25);
    \draw[fill=white] (d2) circle(0.25);
    \draw[fill=white] (d3) circle(0.25);
    \draw[fill=white] (d4) circle(0.25);
    \draw[fill=white] (d5) circle(0.25);
    
    \draw[fill=white!50!red] (e0) circle(0.25);
    \draw[fill=white] (e1) circle(0.25);
    \draw[fill=white] (e2) circle(0.25);
    \draw[fill=white] (e3) circle(0.25);
    \draw[fill=white] (e4) circle(0.25);
    \draw[fill=white] (e5) circle(0.25);
    
    \draw[fill=white!50!red] (f0) circle(0.25);
    \draw[fill=white] (f1) circle(0.25);
    \draw[fill=white] (f2) circle(0.25);
    \draw[fill=white] (f3) circle(0.25);
    \draw[fill=white] (f4) circle(0.25);
    \draw[fill=white] (f5) circle(0.25);

\end{tikzpicture}
    \caption{Pictured is the graph $G$ on the top left, the graph $H$ rooted on the red vertex on the top right, and their comb product $G * H$ on the bottom.}
    \label{fig:comb_product_example}
\end{figure}
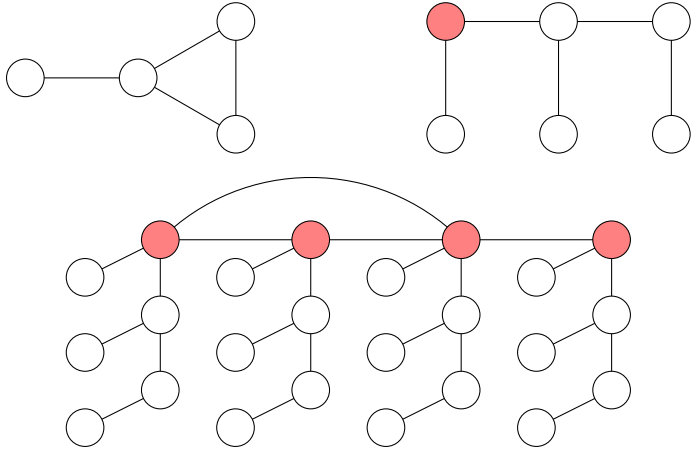

\begin{lemma}\label{lem:big_lemma}
If $G$ is a global amoeba with label set $X$ and $H$ is a rooted graph with label set $B$, where $i$ is the label of the root, then $\bold{H}_i(H) \wr \Fer(G)$ embeds into $\Fer(G * H)$. Moreover, the partition $\{B \times \{x\} \mid x \in X\}$ of $B \times X$, which is the label set of $G * H$, is a block system for $\bold{H}_i(H) \wr \Fer(G)$. 
\end{lemma}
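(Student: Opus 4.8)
The plan is to build the natural imprimitive embedding $\Phi : \mathbf{H}_i(H)\wr\Fer(G)\hookrightarrow \Sym(B\times X)$ and to check that its image lies inside $\Fer(G*H)$. Recall that $G*H$ has label set $B\times X$, that the \emph{block} $B\times\{x\}$ is the copy of $H$ glued to the vertex of $G$ labelled $x$ (with root $(i,x)$), that the intra-block edges are the copies $(b,x)(b',x)$ with $bb'\in L_H$, and that the inter-block edges are the copies $(i,x)(i,x')$ with $xx'\in L_G$. Writing a wreath element as $(\vec h,\pi)$ with $\vec h\in\mathbf{H}_i(H)^X$ and $\pi\in\Fer(G)$, the map $\Phi$ sends it to $(b,x)\mapsto(h_{\pi(x)}(b),\pi(x))$. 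This $\Phi$ is an injective homomorphism essentially by definition of the imprimitive action, so the content of the lemma is that $\Phi$ carries a generating set into $\Fer(G*H)$. Since $\Fer(G*H)$ is a group, it suffices to treat separately the top generators $\hat\pi$ for $\pi$ ranging over $\mathcal{E}_G$, and the base generators, i.e.\ the single-block permutations $(\alpha;x)$ that apply a generator $\alpha\in\mathcal{E}_H^i\cup\Aut(H)$ of $\mathbf{H}_i(H)$ to one block and fix the others.

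For a top generator, a feasible edge-replacement $xx'\to yy'$ of $G$ realized by $\pi$ pulls back to the inter-block replacement $(i,x)(i,x')\to(i,y)(i,y')$: a direct computation of $(G*H)_{\hat\pi}$ shows the intra-block edges are untouched while the inter-block edges are exactly those $(i,u)(i,u')$ with $uu'\in L_{G_\pi}=(L_G\setminus\{xx'\})\cup\{yy'\}$, giving $(G*H)_{\hat\pi}=(G*H)-(i,x)(i,x')+(i,y)(i,y')$, so $\hat\pi\in\mathcal{E}_{G*H}$; when $\pi\in A_G$ the same computation yields $\hat\pi\in A_{G*H}$. For a base generator with $\alpha\in\mathcal{E}_H^i$, since $\alpha(i)=i$ the permutation $(\alpha;x)$ fixes the root $(i,x)$, so all inter-block edges survive and the same bookkeeping shows $(\alpha;x)$ realizes the intra-block replacement $(r,x)(s,x)\to(k,x)(l,x)$ inside $B\times\{x\}$ associated with $\alpha$ (this generalizes \Cref{lemma:extends}, now allowing the block to attach to the rest of $G*H$ along several edges). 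Hence $(\alpha;x)\in\mathcal{E}_{G*H}^{(i,x)}\subseteq\Fer(G*H)$, and a root-fixing automorphism gives $(\alpha;x)\in A_{G*H}$ likewise.

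The only generators left are the single-block permutations $(\alpha;x)$ with $\alpha\in\Aut(H)$ moving the root, say $\alpha^{-1}(i)=i''\ne i$; this is the main obstacle and the only place global-amoeba-ness of $G$ is used. Such $(\alpha;x)$ sends $(i,x)$ to the non-root vertex $(i'',x)$, so $(G*H)_{(\alpha;x)}$ is $G*H$ with the copy of $H$ at $x$ \emph{re-rooted} at the root-similar vertex $i''$: all $\deg_G(x)$ inter-block edges at $x$ migrate from $(i,x)$ to $(i'',x)$. Moving these one at a time is not feasible in general, because the intermediate graphs need not be isomorphic to $G*H$, so $(\alpha;x)$ genuinely lies outside the group generated by the earlier cases. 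The favorable sub-case is when $x$ is a leaf of $G$ with unique neighbour $x_1$: only one edge moves, and the direct computation gives $(G*H)_{(\alpha;x)}=(G*H)-(i,x)(i,x_1)+(i'',x)(i,x_1)$, a single feasible edge-replacement, so $(\alpha;x)\in\mathcal{E}_{G*H}$. To reach arbitrary $x$ I would use that $G$ is a global amoeba, i.e.\ (\Cref{cor: global amoeba = transitive}) $\Fer(G^{*})$ is transitive on the labels of $G^{*}=G\cup K_1$. Passing to $G^{*}*H=(G*H)\sqcup H$, the adjoined copy of $H$ sits on the isolated root, carries no inter-block edge, and hence can be re-rooted by an \emph{automorphism} of $G^{*}*H$; conjugating this free re-rooting by the embedded, transitive top group $\hat{\Fer(G^{*})}$ produces single-block re-rooting at every block. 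The delicate step, which I expect to be the crux, is to descend this back to $G*H$: one must exhibit the re-rooting of a non-leaf block by a replacement sequence that never touches the adjoined copy of $H$, which I would do by realizing the transitivity permutation of $\Fer(G^{*})$ as an explicit sequence isolating $x$, running its analogue on the roots of $G*H$ to drag the attachment of block $x$ to a leaf position, applying the leaf re-rooting above, and reversing the sequence — the point being that global-amoeba-ness is exactly what guarantees each intermediate graph is isomorphic to $G*H$.

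Finally, the block-system claim is immediate from the description of $\Phi$: each base generator $(\alpha;x)$ preserves every fibre $B\times\{x'\}$ setwise, and each top generator $\hat\pi$ maps $B\times\{x'\}$ onto $B\times\{\pi(x')\}$, so the partition $\{B\times\{x\}\mid x\in X\}$ is permuted blockwise by all generators and is therefore a block system for $\mathbf{H}_i(H)\wr\Fer(G)$ (indeed this is precisely its defining imprimitive action). Combining the generator computations gives $\Phi\big(\mathbf{H}_i(H)\wr\Fer(G)\big)\subseteq\Fer(G*H)$, and injectivity of $\Phi$ yields the desired embedding.
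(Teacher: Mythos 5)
Your overall architecture is the same as the paper's: set up the imprimitive embedding, verify the top generators $\widetilde{\pi}$ and the block generators separately, and observe that the block system claim is automatic. Your treatment of the top generators and of the leaf-block re-rooting agrees with the paper's. But there is a genuine gap at exactly the step you flag as the crux, and the fix is much simpler than the detour you propose. For a non-leaf block $B\times\{x\}$ and a root-moving $\alpha\in\Aut(H)$, you do not need to realize $(\alpha;x)$ as an explicit sequence of edge-replacements whose intermediate graphs you control, and you do not need to pass to $G^{*}*H$ and ``descend.'' The lemma only asserts membership in the \emph{group} $\Fer(G*H)$, which is closed under products and inverses. The paper uses the characterization of global amoebas from Caro et al.\ (cited there as Theorem 15 of \cite{caro2023graphs}) directly on $G$: there exist $\sigma\in\Fer(G)$ and a leaf label $y$ of $G$ with $\sigma(x)=y$. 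Then $\widetilde{\sigma}\in\widetilde{T}\leq\Fer(G*H)$, the full block group $\widetilde{S}_y$ at the leaf block is already inside $\Fer(G*H)$ (the paper gets this in one stroke by identifying the induced subgraph on $B\times\{y\}\cup\{(i,z)\}$ with $H_y^\dagger$ and invoking \Cref{prop:hang-group} together with \Cref{lemma:extends}), and the conjugation identity $\widetilde{S}_x=\widetilde{\sigma}^{-1}\widetilde{S}_y\widetilde{\sigma}$ places $\widetilde{S}_x$ inside $\Fer(G*H)$ immediately. Your worry that ``the intermediate graphs need not be isomorphic to $G*H$'' conflates membership in $\langle\mathcal{E}_{G*H}\rangle$ with feasibility of a particular naive edge-migration sequence; the former is all that is required, and your proposed descent argument is left unexecuted precisely because it is attacking a problem that the conjugation dissolves.

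A secondary, smaller issue: your direct verification that $(\alpha;x)\in\mathcal{E}_{G*H}^{(i,x)}$ for $\alpha\in\mathcal{E}_H^i$ at an \emph{arbitrary} block rests on an asserted generalization of \Cref{lemma:extends} to blocks attached along $\deg_G(x)$ edges rather than one. This is believable (all attachment edges meet the fixed root $(i,x)$), but you do not prove it, and the paper avoids needing it entirely: it establishes the whole of $\widetilde{S}_y$ only at leaf blocks, where the single-edge hypothesis of \Cref{lemma:extends} holds verbatim, and then transports everything else by the conjugation above. If you adopt the conjugation step, this generalization becomes unnecessary and your proof closes.
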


\begin{proof}
Let $T = \Fer(G)$ and $S = \bold{H}_i(H)$. Define the map $\sigma \mapsto \widetilde{\sigma}$, where $\widetilde{\sigma}(b, x) = (b, \sigma(x))$, and let $\widetilde{T}$ be the image of $T$ under this map. Moreover, for any $x \in X$, let $\widetilde{S}_x\leq \Sym(B\times X)$ be the subgroup that acts on the block $B \times \{x\}$ by $\sigma(b, x) = (\sigma(b), x)$, for $\sigma \in S$, and stabilizes every other block. In order to embed the group $S \wr T$ into $\Fer(G * H)$, we wish to embed the groups $\widetilde{T}$, and $\widetilde{S}_x$ into $\Fer(G * H)$ for each $x \in X$. Let us start with $\widetilde{T}$. Translating \cite[Lemma 3.6]{hansberg2021recursive} to the language used in this paper, we can assert that $\E_G$ embeds into $\E_{G * H}$ by the map $\sigma \mapsto \widetilde{\sigma}$. In other words, if $xy \to wz$ is a feasible edge-replacement of $G$, then $(i, x)(i, y) \to (i, w)(i, z)$ is also a feasible edge-replacement of $G * H$. Moreover, if the former edge-replacement is associated to $\sigma$, then the latter is associated to $\widetilde{\sigma}$. Since $\E_G$ generates $T$, it follows that the image of $\E_G$ under this embedding generates $\widetilde{T}$, and $\widetilde{T} \leq \Fer(G * H)$ holds.

Next, we find $\widetilde{S}_y \leq \Fer(G * H)$, where $y$ is the label of a leaf in $G$, and $z$ is the label of its unique neighbor in $G$. Let $H_y$ be the subgraph of $G * H$ induced on the vertex set $B \times \{y\}$, and let $(i, y)$ be the root of $H_y$. Then $H_y$ is isomorphic to $H$ by definition. Since $y$ is a leaf in $G$, then $(i, y)(i, z)$ is the unique edge outgoing from $H_y$. Then, the graph induced on the vertex set $B \times \{y\} \cup \{(i, z)\}$ is equal to $H_y^\dagger$, where the newly added leaf is given the label $(i, z)$. Thus, $\bold{H}_y(H) \cong \bold{H}_{(i, y)}(H_y) = \Fer^{(i, z)}(H_y^\dagger) \leq \Fer(G * H)$, where the last inclusion follows from Lemma \ref{lemma:extends}. The image of this embedding is $\widetilde{S}_y$, and so we have $\widetilde{S}_y \leq \Fer(G * H)$.

Finally, we show that $\widetilde{S}_x \leq \Fer(G * H)$ for any $x \in X$. Let $x \in X$. By \cite[Theorem 15]{caro2023graphs}, there is a permutation $\sigma \in \Fer(G)$ and a label $y \in Y$ such that $y$ is the label of a leaf, and $\sigma(x) = y$. Since $\widetilde{S}_y \leq \Fer(G * H)$, and $\widetilde{\sigma} \in \Fer(G * H)$, then $\widetilde{S}_x = \widetilde{\sigma}^{-1}\widetilde{S}_y\widetilde{\sigma} \leq \Fer(G * H)$. Now, we have shown that $\widetilde{T} \leq \Fer(G * H)$ and $\widetilde{S}_x \leq \Fer(G * H)$ for every $x \in X$, and so we may conclude that $\bold{H}_i(H) \wr \Fer(G) \leq \Fer(G * H)$.
\end{proof}

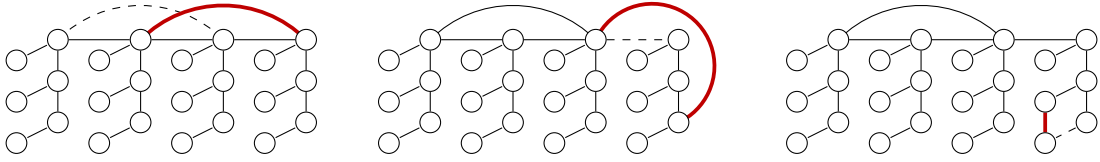
\begin{figure}[H]~\label{fig:wreath_edge_replacements}
    \centering
    $\;$\hfill
    \begin{tikzpicture}[scale=0.55]

\node (c0) at (0, 0) {};
\node (c1) at (0, -1) {};
\node (c2) at (0, -2) {};
\node (c3) at (-1, -0.5) {};
\node (c4) at (-1, -1.5) {};
\node (c5) at (-1, -2.5) {};

\draw (c0) -- (c1) -- (c2);
\draw (c0) -- (c3);
\draw (c1) -- (c4);
\draw (c2) -- (c5);

\node (d0) at (2, 0) {};
\node (d1) at (2, -1) {};
\node (d2) at (2, -2) {};
\node (d3) at (1, -0.5) {};
\node (d4) at (1, -1.5) {};
\node (d5) at (1, -2.5) {};

\draw (d0) -- (d1) -- (d2);
\draw (d0) -- (d3);
\draw (d1) -- (d4);
\draw (d2) -- (d5);

\node (e0) at (4, 0) {};
\node (e1) at (4, -1) {};
\node (e2) at (4, -2) {};
\node (e3) at (3, -0.5) {};
\node (e4) at (3, -1.5) {};
\node (e5) at (3, -2.5) {};

\draw (e0) -- (e1) -- (e2);
\draw (e0) -- (e3);
\draw (e1) -- (e4);
\draw (e2) -- (e5);

\node (f0) at (6, 0) {};
\node (f1) at (6, -1) {};
\node (f2) at (6, -2) {};
\node (f3) at (5, -0.5) {};
\node (f4) at (5, -1.5) {};
\node (f5) at (5, -2.5) {};

\draw (f0) -- (f1) -- (f2);
\draw (f0) -- (f3);
\draw (f1) -- (f4);
\draw (f2) -- (f5);

\draw (c0) -- (d0) -- (e0) -- (f0);
\draw[dashed] (c0) arc(135:45:2.828);
\draw[line width=0.5mm, color = red!75!black] (d0) arc(135:45:2.828);

\draw[fill=white] (c0) circle(0.25);
\draw[fill=white] (c1) circle(0.25);
\draw[fill=white] (c2) circle(0.25);
\draw[fill=white] (c3) circle(0.25);
\draw[fill=white] (c4) circle(0.25);
\draw[fill=white] (c5) circle(0.25);

\draw[fill=white] (d0) circle(0.25);
\draw[fill=white] (d1) circle(0.25);
\draw[fill=white] (d2) circle(0.25);
\draw[fill=white] (d3) circle(0.25);
\draw[fill=white] (d4) circle(0.25);
\draw[fill=white] (d5) circle(0.25);

\draw[fill=white] (e0) circle(0.25);
\draw[fill=white] (e1) circle(0.25);
\draw[fill=white] (e2) circle(0.25);
\draw[fill=white] (e3) circle(0.25);
\draw[fill=white] (e4) circle(0.25);
\draw[fill=white] (e5) circle(0.25);

\draw[fill=white] (f0) circle(0.25);
\draw[fill=white] (f1) circle(0.25);
\draw[fill=white] (f2) circle(0.25);
\draw[fill=white] (f3) circle(0.25);
\draw[fill=white] (f4) circle(0.25);
\draw[fill=white] (f5) circle(0.25);

\end{tikzpicture}
\hfill
\begin{tikzpicture}[scale=0.55]

\node (c0) at (0, 0) {};
\node (c1) at (0, -1) {};
\node (c2) at (0, -2) {};
\node (c3) at (-1, -0.5) {};
\node (c4) at (-1, -1.5) {};
\node (c5) at (-1, -2.5) {};

\draw (c0) -- (c1) -- (c2);
\draw (c0) -- (c3);
\draw (c1) -- (c4);
\draw (c2) -- (c5);

\node (d0) at (2, 0) {};
\node (d1) at (2, -1) {};
\node (d2) at (2, -2) {};
\node (d3) at (1, -0.5) {};
\node (d4) at (1, -1.5) {};
\node (d5) at (1, -2.5) {};

\draw (d0) -- (d1) -- (d2);
\draw (d0) -- (d3);
\draw (d1) -- (d4);
\draw (d2) -- (d5);

\node (e0) at (4, 0) {};
\node (e1) at (4, -1) {};
\node (e2) at (4, -2) {};
\node (e3) at (3, -0.5) {};
\node (e4) at (3, -1.5) {};
\node (e5) at (3, -2.5) {};

\draw (e0) -- (e1) -- (e2);
\draw (e0) -- (e3);
\draw (e1) -- (e4);
\draw (e2) -- (e5);

\node (f0) at (6, 0) {};
\node (f1) at (6, -1) {};
\node (f2) at (6, -2) {};
\node (f3) at (5, -0.5) {};
\node (f4) at (5, -1.5) {};
\node (f5) at (5, -2.5) {};

\draw (f0) -- (f1) -- (f2);
\draw (f0) -- (f3);
\draw (f1) -- (f4);
\draw (f2) -- (f5);

\draw (c0) -- (d0) -- (e0);
\draw[dashed] (e0) -- (f0);
\draw (c0) arc(135:45:2.828);
\draw[line width=0.5mm, color = red!75!black] (e0) arc(155:-65:1.505);

\draw[fill=white] (c0) circle(0.25);
\draw[fill=white] (c1) circle(0.25);
\draw[fill=white] (c2) circle(0.25);
\draw[fill=white] (c3) circle(0.25);
\draw[fill=white] (c4) circle(0.25);
\draw[fill=white] (c5) circle(0.25);

\draw[fill=white] (d0) circle(0.25);
\draw[fill=white] (d1) circle(0.25);
\draw[fill=white] (d2) circle(0.25);
\draw[fill=white] (d3) circle(0.25);
\draw[fill=white] (d4) circle(0.25);
\draw[fill=white] (d5) circle(0.25);

\draw[fill=white] (e0) circle(0.25);
\draw[fill=white] (e1) circle(0.25);
\draw[fill=white] (e2) circle(0.25);
\draw[fill=white] (e3) circle(0.25);
\draw[fill=white] (e4) circle(0.25);
\draw[fill=white] (e5) circle(0.25);

\draw[fill=white] (f0) circle(0.25);
\draw[fill=white] (f1) circle(0.25);
\draw[fill=white] (f2) circle(0.25);
\draw[fill=white] (f3) circle(0.25);
\draw[fill=white] (f4) circle(0.25);
\draw[fill=white] (f5) circle(0.25);

\end{tikzpicture}
\hfill
\begin{tikzpicture}[scale=0.55]

\node (c0) at (0, 0) {};
\node (c1) at (0, -1) {};
\node (c2) at (0, -2) {};
\node (c3) at (-1, -0.5) {};
\node (c4) at (-1, -1.5) {};
\node (c5) at (-1, -2.5) {};

\draw (c0) -- (c1) -- (c2);
\draw (c0) -- (c3);
\draw (c1) -- (c4);
\draw (c2) -- (c5);

\node (d0) at (2, 0) {};
\node (d1) at (2, -1) {};
\node (d2) at (2, -2) {};
\node (d3) at (1, -0.5) {};
\node (d4) at (1, -1.5) {};
\node (d5) at (1, -2.5) {};

\draw (d0) -- (d1) -- (d2);
\draw (d0) -- (d3);
\draw (d1) -- (d4);
\draw (d2) -- (d5);

\node (e0) at (4, 0) {};
\node (e1) at (4, -1) {};
\node (e2) at (4, -2) {};
\node (e3) at (3, -0.5) {};
\node (e4) at (3, -1.5) {};
\node (e5) at (3, -2.5) {};

\draw (e0) -- (e1) -- (e2);
\draw (e0) -- (e3);
\draw (e1) -- (e4);
\draw (e2) -- (e5);

\node (f0) at (6, 0) {};
\node (f1) at (6, -1) {};
\node (f2) at (6, -2) {};
\node (f3) at (5, -0.5) {};
\node (f4) at (5, -1.5) {};
\node (f5) at (5, -2.5) {};

\draw (f0) -- (f1) -- (f2);
\draw (f0) -- (f3);
\draw (f1) -- (f4);
\draw[dashed] (f2) -- (f5);
\draw[line width=0.5mm, color = red!75!black] (f4) -- (f5);

\draw (c0) -- (d0) -- (e0) -- (f0);
\draw (c0) arc(135:45:2.828);

\draw[fill=white] (c0) circle(0.25);
\draw[fill=white] (c1) circle(0.25);
\draw[fill=white] (c2) circle(0.25);
\draw[fill=white] (c3) circle(0.25);
\draw[fill=white] (c4) circle(0.25);
\draw[fill=white] (c5) circle(0.25);

\draw[fill=white] (d0) circle(0.25);
\draw[fill=white] (d1) circle(0.25);
\draw[fill=white] (d2) circle(0.25);
\draw[fill=white] (d3) circle(0.25);
\draw[fill=white] (d4) circle(0.25);
\draw[fill=white] (d5) circle(0.25);

\draw[fill=white] (e0) circle(0.25);
\draw[fill=white] (e1) circle(0.25);
\draw[fill=white] (e2) circle(0.25);
\draw[fill=white] (e3) circle(0.25);
\draw[fill=white] (e4) circle(0.25);
\draw[fill=white] (e5) circle(0.25);

\draw[fill=white] (f0) circle(0.25);
\draw[fill=white] (f1) circle(0.25);
\draw[fill=white] (f2) circle(0.25);
\draw[fill=white] (f3) circle(0.25);
\draw[fill=white] (f4) circle(0.25);
\draw[fill=white] (f5) circle(0.25);

\end{tikzpicture}
\hfill$\;$
    \caption{Let $G$ and $H$ be the graphs described in \Cref{fig:comb_product_example}. Using the notation in \Cref{lem:big_lemma}, the leftmost edge-replacement generates a permutation in $\widetilde{T}$, and the two rightmost generate permutations in $\widetilde{S}_x$ for some $x$.}~\label{fig:wreath_edge_replacements}
\end{figure}

In order to leverage the maximality of $S_m \wr S_n$ inside $S_{mn}$ to construct local amoebas, we require conditions that allow the embedding of $S_m \wr S_n$ into $\Fer(G * H)$.

\begin{corollary}\label{cor:big_corollary}
Let $G$ be a local amoeba with a leaf on $n$ vertices, and let $H$ be a rooted graph on $m$ vertices which is hang-symmetric at the root. Then the following are true:

\begin{enumerate}[label = (\arabic*)]
    \item $S_m \wr S_n \leq \Fer(G * H)$.
    \item $\Fer(G * H)$ is either equal to $S_m \wr S_n$ or $S_{mn}$.
    \item If there is some permutation in $\Fer(G * H)$ that does \textit{not} respect the block system $\{B \times \{x\} \mid x \in X\}$, then $G * H$ is a local amoeba.
\end{enumerate}
\end{corollary}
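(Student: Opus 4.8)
The plan is to assemble \Cref{lem:big_lemma} with the maximality of $S_m \wr S_n$ recorded in \Cref{Sm_wr_Sn_is_maximal}; most of the work has already been carried out in those results, so the corollary reduces to checking that the hypotheses align. For item (1), the first step is to upgrade $G$ from a local amoeba to a global amoeba, since that is what \Cref{lem:big_lemma} requires. Because $G$ has a leaf we have $\delta(G) \in \{0, 1\}$, so \Cref{lem:local-am with leaf implies local and global} shows $G$ is a global amoeba. Now \Cref{lem:big_lemma} applies and yields an embedding $\mathbf{H}_i(H) \wr \Fer(G) \leq \Fer(G * H)$ for which $\{B \times \{x\} \mid x \in X\}$ is a block system. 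Since $G$ is a local amoeba on $n$ vertices we have $\Fer(G) = \Sym(X) = S_n$, and since $H$ is hang-symmetric at its root we have $\mathbf{H}_i(H) = \Sym(B) = S_m$; substituting these identifications gives $S_m \wr S_n \leq \Fer(G * H)$, which is item (1).

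For item (2), I would view $\Fer(G * H)$ as a subgroup of $\Sym(B \times X) = S_{mn}$, so that item (1) produces the containments $S_m \wr S_n \leq \Fer(G * H) \leq S_{mn}$. Note $n \geq 2$ automatically, as $G$ has a leaf, and the degenerate case $m = 1$ is trivial since then $G * H \cong G$ and $S_m \wr S_n = S_{mn}$; so we may assume $m, n \geq 2$. Then \Cref{Sm_wr_Sn_is_maximal} states that $S_m \wr S_n$ is a maximal subgroup of $S_{mn}$, leaving no room for a proper intermediate subgroup. Hence $\Fer(G * H)$ equals either $S_m \wr S_n$ or $S_{mn}$.

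Item (3) is then immediate from the other half of \Cref{Sm_wr_Sn_is_maximal}, which identifies $S_m \wr S_n$ as the \emph{largest} subgroup of $S_{mn}$ admitting $\{B \times \{x\} \mid x \in X\}$ as a block system; in particular every element of $S_m \wr S_n$ respects this block system. Consequently, a permutation in $\Fer(G * H)$ that does not respect the block system cannot lie in $S_m \wr S_n$, which rules out the first alternative of item (2). We are forced to conclude $\Fer(G * H) = S_{mn} = \Sym(B \times X)$, which is precisely the statement that $G * H$ is a local amoeba.

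The only genuine difficulty is bookkeeping rather than mathematics: one must verify that ``local amoeba with a leaf'' is exactly the input \Cref{lem:big_lemma} needs after passing through \Cref{lem:local-am with leaf implies local and global}, and must correctly match the abstract wreath product $\mathbf{H}_i(H) \wr \Fer(G)$ with the concrete group $S_m \wr S_n$ acting on $B \times X$ by its imprimitive action. Once those identifications are secured, items (2) and (3) fall out directly from the maximality statement, and there is no additional combinatorial content to supply.
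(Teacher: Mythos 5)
Your proposal is correct and follows essentially the same route as the paper: upgrade $G$ to a global amoeba via \Cref{lem:local-am with leaf implies local and global}, invoke \Cref{lem:big_lemma} to obtain item (1), and then use the maximality and "largest block-respecting subgroup" clauses of \Cref{Sm_wr_Sn_is_maximal} for items (2) and (3). The only additions beyond the paper's argument are the explicit identifications $\Fer(G)=S_n$, $\mathbf{H}_i(H)=S_m$ and the check that $m,n\geq 2$, which are harmless and arguably make the write-up more careful.
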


\begin{proof}
Since $G$ is a local amoeba with a leaf, it follows that $G$ is a global amoeba by \Cref{lem:local-am with leaf implies local and global}. Therefore, $(1)$ follows immediately from \Cref{lem:big_lemma}. Claim $(2)$ follows from the fact that $S_m \wr S_n$ is maximal in $S_{mn}$. To prove $(3)$, suppose that there is $\sigma \in \Fer(G * H)$ which does not respect the block system $\{B \times \{x\} \mid x \in X\}$. Since $S_m \wr S_n \leq \Fer(G * H)$ is the largest group that respects the block system, then $\sigma \notin S_m \wr S_n$. Thus, $\Fer(G * H)$ is strictly larger than $S_m \wr S_n$, and so it is equal to $S_{mn}$. Thus, $G * H$ is a local amoeba, and $(3)$ follows.
\end{proof}

Previously, stem-symmetric graphs with root-similar vertices have been studied in \cite{eslava2023new}. This condition is similar to hang-symmetry but strictly stronger. It turns out that requiring this condition of $H$ is also sufficient to construct a local amoeba as in \Cref{cor:big_corollary}.

\begin{corollary}\label{cor:also_big_corollary}
Let $G$ be a local amoeba on $n$ vertices with a leaf, and $H$ be a rooted graph on $m$ vertices with a root-similar vertex that is stem-symmetric at its root. Then $S_m \wr S_n \leq \Fer(G * H)$, and in particular, $\Fer(G * H)$ is either equal to $S_m \wr S_n$ or $S_{mn}$.
\end{corollary}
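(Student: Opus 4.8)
The plan is to reduce the statement directly to \Cref{cor:big_corollary} by proving that, under the stated hypotheses, $H$ is hang-symmetric at its root. Once that reduction is in place, parts (1) and (2) of \Cref{cor:big_corollary} apply verbatim and deliver both $S_m \wr S_n \leq \Fer(G * H)$ and the dichotomy $\Fer(G * H) \in \{S_m \wr S_n,\, S_{mn}\}$. Thus the entire burden of the proof is the implication
\begin{center}
\emph{$H$ stem-symmetric at its root with a root-similar vertex} $\implies$ \emph{$H$ hang-symmetric at its root,}
\end{center}
which is precisely the remark recorded just after the definition of the hang group in \Cref{sec: The Hang Group}.

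To establish this implication, let $B$ be the label set of $H$ and let $i$ be the label of its root, so $|B| = m$. First I would use stem-symmetry to pin down $\langle \E^i_H \rangle$ exactly: by definition $\Fer^i(H) = \langle \E^i_H \rangle$ is a subgroup of $\stab_{\Sym(B)}(i) \cong S_{m-1}$, while stem-symmetry asserts $\Fer^i(H) \cong S_{m-1}$, so a comparison of finite orders forces $\langle \E^i_H \rangle = \stab_{\Sym(B)}(i)$. Next, the root-similar vertex supplies an automorphism $\varphi \in \Aut(H)$ with $\varphi(i) \neq i$; since $\Aut(H)$ is part of the generating set of the hang group $\mathbf{H}_i(H) = \langle \E^i_H \cup \Aut(H)\rangle$, this $\varphi$ is an element of $\mathbf{H}_i(H)$ lying outside $\stab_{\Sym(B)}(i)$.

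The decisive point is then the standard fact that $\stab_{\Sym(B)}(i)$ is a \emph{maximal} subgroup of $\Sym(B)$ whenever $m \geq 2$; here $m \geq 2$ because a root-similar vertex is, by definition, distinct from the root. Since $\mathbf{H}_i(H)$ contains the maximal subgroup $\stab_{\Sym(B)}(i) = \langle \E^i_H\rangle$ together with the element $\varphi \notin \stab_{\Sym(B)}(i)$, it must equal all of $\Sym(B)$; that is, $H$ is hang-symmetric at its root. Invoking \Cref{cor:big_corollary} with this now-verified hypothesis then finishes the argument.

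I do not anticipate a genuine obstacle, since the core of the argument is exactly the maximality reasoning that underlies \Cref{lem:stem-sym_local}, now carried out at the level of the hang group rather than the Fer group. The only small care points are verifying $m \geq 2$ and performing the order comparison that identifies $\langle \E^i_H\rangle$ with the full point stabilizer; both are routine. The conceptual value of the corollary is simply that stem-symmetry with a root-similar vertex is a strictly stronger, but frequently more easily checkable, hypothesis than hang-symmetry, so \Cref{cor:big_corollary} can be applied without ever computing a hang group directly.
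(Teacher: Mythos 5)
Your proposal is correct and follows essentially the same route as the paper's proof: both reduce to \Cref{cor:big_corollary} by showing that stem-symmetry at the root together with a root-similar vertex forces $\mathbf{H}_i(H) = \Sym(B)$, using the maximality of the point stabilizer $S_{m-1}$ in $S_m$. Your version merely spells out the order-comparison step identifying $\Fer^i(H)$ with the full stabilizer and the check that $m \geq 2$, which the paper leaves implicit.
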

\begin{proof}
Let $i$ be the label of the root of $H$. Since $H$ is stem-symmetric at its root, then $S_{m-1} \leq \bold{H}_i(H)$, fixing the label $i$. Since there is an automorphism that moves the label $i$, and $S_{m-1}$ is maximal in $S_m$, then $\bold{H}_i(H) = S_m$, and so $H$ is hang-symmetric. Then the result follows by \Cref{cor:big_corollary}.
\end{proof}

If two graphs $G$ and $H$ satisfy the assumptions in \Cref{cor:big_corollary}, then to prove that $G * H$ is a local amoeba, it suffices to find a permutation in $\E_{G * H}$ or an automorphism that does not respect the block structure $\{B \times \{x\} \mid x \in X\}$ which is preserved by the wreath product $S_m \wr S_n$. Any such permutation is called a \emph{skew}. See the following example for a local amoeba constructed in this manner.

\begin{example}
Consider our running example of the graphs $G$ and $H$ from \Cref{fig:comb_product_example} where $v$ is any degree $2$ vertex of $H$. Call $X$ and $B$ the label sets of $G$ and $H$. We show that $G * H$ is a local amoeba. Consider the following labeling of the vertices of $H$ where $v$ is given the label $1$.
\begin{center}
\begin{tikzpicture}
\node (c0) at (0, 0) {};
\node (c1) at (1.5, 0) {};
\node (c2) at (3, 0) {};
\node (c3) at (0, -1.5) {};
\node (c4) at (1.5, -1.5) {};
\node (c5) at (3, -1.5) {};

\draw (c0) -- (c1) -- (c2);
\draw (c0) -- (c3);
\draw (c1) -- (c4);
\draw (c2) -- (c5);

\draw[fill=white!75!red] (c0) circle(0.3);
\draw[fill=white] (c1) circle(0.3);
\draw[fill=white] (c2) circle(0.3);
\draw[fill=white] (c3) circle(0.3);
\draw[fill=white] (c4) circle(0.3);
\draw[fill=white] (c5) circle(0.3);

\node at (c0) {$1$};
\node at (c1) {$2$};
\node at (c2) {$3$};
\node at (c3) {$4$};
\node at (c4) {$5$};
\node at (c5) {$6$};
\end{tikzpicture}
\label{fig:little-comb}
\end{center}

A quick calculation shows that $(2 \; 3)(5 \; 6),(5 \; 3),(3 \; 6)\in \mathcal{E}^1_H$ and $(1 \; 3)(4 \; 6)\in \Aut(H)$. Therefore $\mathbf{H}_1(H)=S_6$ as these four permutations generate $S_6$. Since $G$ is a local amoeba with a leaf, then it follows by Corollary \ref{cor:big_corollary} that $S_6 \wr S_4 \leq \Fer(G * H)$. 

From here, it remains to find a skew. Consider the following edge-replacement, where we remove the dashed edge and add the red edge.

\begin{center}
\begin{tikzpicture}[scale=0.75]

\node (c0) at (0, 0) {};
\node (c1) at (0, -1) {};
\node (c2) at (0, -2) {};
\node (c3) at (-1, -0.5) {};
\node (c4) at (-1, -1.5) {};
\node (c5) at (-1, -2.5) {};

\draw (c0) -- (c1) -- (c2);
\draw (c0) -- (c3);
\draw (c1) -- (c4);
\draw (c2) -- (c5);

\node (d0) at (2, 0) {};
\node (d1) at (2, -1) {};
\node (d2) at (2, -2) {};
\node (d3) at (1, -0.5) {};
\node (d4) at (1, -1.5) {};
\node (d5) at (1, -2.5) {};

\draw (d0) -- (d1) -- (d2);
\draw (d0) -- (d3);
\draw (d1) -- (d4);
\draw (d2) -- (d5);

\node (e0) at (4, 0) {};
\node (e1) at (4, -1) {};
\node (e2) at (4, -2) {};
\node (e3) at (3, -0.5) {};
\node (e4) at (3, -1.5) {};
\node (e5) at (3, -2.5) {};

\draw (e0) -- (e1) -- (e2);
\draw (e0) -- (e3);
\draw (e1) -- (e4);
\draw (e2) -- (e5);

\node (f0) at (6, 0) {};
\node (f1) at (6, -1) {};
\node (f2) at (6, -2) {};
\node (f3) at (5, -0.5) {};
\node (f4) at (5, -1.5) {};
\node (f5) at (5, -2.5) {};

\draw (f0) -- (f1);
\draw[dashed] (f1) -- (f2);
\draw (f0) -- (f3);
\draw (f1) -- (f4);
\draw (f2) -- (f5);
\draw[line width=0.5mm, color = red!75!black] (f2) -- (e2);

\draw (c0) -- (d0) -- (e0) -- (f0);
\draw (c0) arc(135:45:2.828);

\draw[fill=white] (c0) circle(0.25);
\draw[fill=white] (c1) circle(0.25);
\draw[fill=white] (c2) circle(0.25);
\draw[fill=white] (c3) circle(0.25);
\draw[fill=white] (c4) circle(0.25);
\draw[fill=white] (c5) circle(0.25);

\draw[fill=white] (d0) circle(0.25);
\draw[fill=white] (d1) circle(0.25);
\draw[fill=white] (d2) circle(0.25);
\draw[fill=white] (d3) circle(0.25);
\draw[fill=white] (d4) circle(0.25);
\draw[fill=white] (d5) circle(0.25);

\draw[fill=white] (e0) circle(0.25);
\draw[fill=white] (e1) circle(0.25);
\draw[fill=white] (e2) circle(0.25);
\draw[fill=white] (e3) circle(0.25);
\draw[fill=white] (e4) circle(0.25);
\draw[fill=white] (e5) circle(0.25);

\draw[fill=white] (f0) circle(0.25);
\draw[fill=white] (f1) circle(0.25);
\draw[fill=white] (f2) circle(0.25);
\draw[fill=white] (f3) circle(0.25);
\draw[fill=white] (f4) circle(0.25);
\draw[fill=white] (f5) circle(0.25);
\end{tikzpicture}
\end{center}
This feasible edge-replacement generates a permutation which does not respect the block system $\{B\times \{x\}\mid x\in X\}$. Therefore, this permutation -- which is a skew -- lies outside $S_6 \wr S_4$. By maximality of $S_6 \wr S_4$ in $S_{24}$, it follows that $G * H$ is a local amoeba.
\end{example}

We discuss two cases of skews in the following lemma and theorem. A third case can be found in~\Cref{ex:not_paths}.

\begin{lemma}\label{lem:disconnected_comb_product}
Let $G$ and $H$ be local amoebas satisfying the assumptions of Corollary \ref{cor:big_corollary}. Moreover, assume that $H$ is disconnected. Then $G * H$ is a local amoeba.
\end{lemma}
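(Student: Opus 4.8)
The plan is to invoke \Cref{cor:big_corollary} and then produce a skew, i.e.\ a feasible edge-replacement of $G * H$ whose associated permutation does not respect the block system $\{B \times \{x\} \mid x \in X\}$. By part $(2)$ of \Cref{cor:big_corollary}, we already know $\Fer(G * H)$ is either $S_m \wr S_n$ or $S_{mn}$, and by part $(3)$ it suffices to exhibit a single permutation in $\Fer(G * H)$ that fails to preserve the blocks. Since $G$ and $H$ satisfy the hypotheses of \Cref{cor:big_corollary} by assumption, the inclusion $S_m \wr S_n \leq \Fer(G * H)$ is in hand, so the entire proof reduces to constructing this one skew using the extra hypothesis that $H$ is disconnected.

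The key idea I would exploit is that disconnectedness of $H$ means some vertex of a copy of $H$ sits in a different connected component from its root. First I would fix a leaf $y$ of $G$ with unique neighbor $z$, so that the only edge of $G * H$ leaving the block $B \times \{y\}$ is the ``bridge'' edge $(i, y)(i, z)$, where $i$ is the label of the root of $H$. Because $H$ is disconnected, the copy of $H$ sitting on the block $B \times \{y\}$ has a component $C$ not containing the root $(i, y)$; this means the vertices of $C$ are attached to the rest of $G * H$ only through edges internal to the block, and severing them does not disconnect any label from the root structure in a way that prevents reattachment elsewhere. The skew I would build is an edge-replacement that detaches an edge inside one block's copy of $H$ (specifically an edge incident to a vertex of $C$) and reattaches it across blocks, or that moves a component of $H$ from one block onto a vertex of another block, thereby mixing two columns $B \times \{x\}$ and $B \times \{x'\}$. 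The feasibility check amounts to verifying that the resulting graph is again isomorphic to $G * H$: since every block is a copy of $H$ and $H$ is disconnected, one can relocate a whole component of $H$ from one block to be attached to a vertex in a neighboring block while preserving the overall isomorphism type, provided $G$ has a suitable edge to route the reattachment through.

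The step I expect to be the main obstacle is verifying feasibility of the constructed edge-replacement, i.e.\ confirming that after removing the chosen edge and inserting the new one across blocks, the result is genuinely isomorphic to $G * H$ (respecting the fact that $G * H$ is $n$ copies of $H$ combed onto $G$). The disconnectedness is exactly what makes this work: a free-floating component of $H$ can be reattached in a structurally equivalent location without altering degree sequences or the comb structure, whereas for connected $H$ no such relocation is available without breaking the pattern. I would therefore carefully choose the component $C$ and the target vertex in an adjacent block so that the degrees match and the global graph is preserved, then record that the induced permutation necessarily sends a label from one block to another and hence violates the block system. Once such a skew is exhibited, \Cref{cor:big_corollary}$(3)$ immediately yields that $G * H$ is a local amoeba, completing the proof.
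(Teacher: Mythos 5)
Your high-level framework is the same as the paper's: invoke \Cref{cor:big_corollary} and exhibit a single skew, i.e.\ a permutation in $\Fer(G*H)$ that breaks the block system $\{B \times \{x\} \mid x \in X\}$. However, the skew you propose is never actually constructed, and the construction you sketch would not work as described. You plan to find a \emph{feasible edge-replacement} that ``moves a component of $H$ from one block onto a vertex of another block,'' but an edge-replacement moves exactly one edge, not a whole component, and you yourself flag the feasibility check as ``the main obstacle'' without resolving it. There is also a structural confusion in your setup: the component $C$ of the copy of $H$ not containing the root is not ``attached to the rest of $G*H$ only through edges internal to the block'' --- it is not attached to the rest of $G*H$ at all. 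It is an entire connected component of $G*H$, so there is no edge whose removal ``detaches'' it, and no reattachment is needed.

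The missing idea is that this last observation is precisely what hands you the skew for free, as an \emph{automorphism} rather than a nontrivial edge-replacement. Write $H = H_1 \cup H_2$ with the root in $H_1$; then $G*H$ is the disjoint union of $G*H_1$ with $|V(G)|$ identical copies of $H_2$, one per block. The permutation swapping the copies $B_2 \times \{x\}$ and $B_2 \times \{y\}$ (for $x \neq y$) and fixing everything else is an automorphism of $G*H$, hence lies in $\E_{G*H} \subseteq \Fer(G*H)$ via the neutral edge-replacement. It moves $B_2 \times \{x\}$ into the block $B \times \{y\}$ while fixing $B_1 \times \{x\}$, so it does not respect the block system, and \Cref{cor:big_corollary}(3) finishes the proof. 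No feasibility verification is required, which is exactly the obstacle your sketch could not get past.
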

\begin{proof}
If $H$ is disconnected, then $H$ decomposes into the disjoint union of graphs $H = H_1 \cup H_2$, and the label set $B$ of $H$ decomposes into the disjoint union $B = B_1 \cup B_2$, where $B_i$ is the label set of $H_i$ for $i \in \{1, 2\}$. Assume without loss of generality that the root of $H$ is in $V(H_1)$. Then $G * H$ is isomorphic to the disjoint union of $G * H_1$ and $|V(G)|$ copies of $H_2$. Moreover, each block $B \times \{x\}$ in the block system $\{B \times \{x\} \mid x \in X\}$ consists of the disjoint union of $B_1 \times \{x\}$ and $B_2 \times \{x\}$. Thus, for $x \neq y \in X$, there is an automorphism switching $B_2 \times \{x\}$ and $B_2 \times \{y\}$ (i.e., switching two of the $|V(G)|$ copies of $H_2$), and fixing everything else. This does not preserve the block system, otherwise we would also have to switch $B_1 \times \{x\}$ and $B_1 \times \{y\}$. Therefore, it follows by Corollary \ref{cor:big_corollary} that $G * H$ is a local amoeba.
\end{proof}

The following theorem adds to the work of Caro et al. in \cite{hansberg2021recursive} and provides an example of a skew. 

\begin{theorem}\label{thm:path-amoeba}
Let $P_n$ be the path on $n$ vertices rooted at a leaf. Then for any local amoeba $G$ with a leaf, we have that $G * P_n$ is a local amoeba.
\end{theorem}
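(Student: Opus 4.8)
The plan is to put $G * P_n$ into the setting of \Cref{cor:big_corollary} and then exhibit an explicit skew. Write $B = \{1, \dots, n\}$ for the label set of $P_n$, regarded as the path $1 - 2 - \cdots - n$ rooted at the leaf $1$, let $X$ be the label set of $G$, and set $N = |V(G)|$; the comb product $G * P_n$ then carries the block system $\{B \times \{x\} \mid x \in X\}$. The case $n = 1$ is trivial since $G * P_1 = G$, so assume $n \ge 2$. To invoke \Cref{cor:big_corollary} I must check that $P_n$ is hang-symmetric at its root, for which it suffices (by the remark following the definition of the hang group, and as in the proof of \Cref{cor:also_big_corollary}) that $P_n$ is stem-symmetric at the leaf $1$ and has a root-similar vertex. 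The root-similar vertex is free: the opposite leaf $n$ is the image of $1$ under the reflection automorphism of $P_n$.

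For the stem-symmetry, the key observation is the following family of moves. For $1 \le k \le n-2$, deleting the edge $(k, k+1)$ splits $P_n$ into $1 - \cdots - k$ and $(k+1) - \cdots - n$, and reinserting the edge $(k, n)$ produces the path $1 - \cdots - k - n - (n-1) - \cdots - (k+1)$, so this edge-replacement is feasible; a short computation shows its associated permutation is $r_k$, the permutation fixing $\{1, \dots, k\}$ and reversing the suffix $\{k+1, \dots, n\}$. Since each $r_k$ fixes the root label $1$, we have $r_k \in \mathcal{E}^1_{P_n}$. For $n \ge 4$ the product $r_1 r_2$ is the $(n-1)$-cycle $(2\; n\; n{-}1\; \cdots\; 3)$ on $\{2,\dots,n\}$, while $r_{n-2} = (n{-}1\; n)$ is a cyclically adjacent transposition of that cycle, so $\langle r_1, r_2, r_{n-2}\rangle = \Sym(\{2, \dots, n\})$; the cases $n \le 3$ are immediate. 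Hence $\fer^1(P_n) = \Sym(\{2, \dots, n\}) \cong S_{n-1}$, so $P_n$ is stem-symmetric at its root and therefore hang-symmetric. \Cref{cor:big_corollary} now gives $S_n \wr S_N \le \Fer(G * P_n)$ and $\Fer(G * P_n) \in \{S_n \wr S_N,\ S_{nN}\}$.

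It remains to produce a skew, after which \Cref{cor:big_corollary}(3) forces $\Fer(G * P_n) = S_{nN}$ and hence that $G * P_n$ is a local amoeba. Here I would exploit the leaf of $G$. Let $w$ be a leaf of $G$ with unique neighbor $w'$. Because $w$ is a leaf, the column $B \times \{w\}$ is joined to the rest of $G * P_n$ only through the root edge $(1, w')(1, w)$, so it hangs off the vertex $(1, w')$ as a pendant path of $n$ vertices, while the vertices $\{(2, w'), \dots, (n, w')\}$ form a second pendant path of $n-1$ vertices at $(1, w')$. I would then remove the far edge $(n{-}1, w)(n, w)$ of column $w$ and insert the edge $(n, w')(n, w)$. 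This swaps the two pendant-path lengths at $(1, w')$ from $(n, n-1)$ to $(n-1, n)$ and changes nothing else, so the multiset of pendant lengths at $(1,w')$ is preserved and the result is isomorphic to $G * P_n$; thus the edge-replacement is feasible.

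Finally I would show that any permutation $\sigma$ realizing this replacement violates the block system. If $\sigma$ respected the blocks, then $\sigma \in S_n \wr S_N$ by \Cref{Sm_wr_Sn_is_maximal}, so $\sigma$ would carry the unordered pair of blocks $\{B \times \{w\}, B \times \{w'\}\}$ onto another such pair, and consequently the number of edges between these two blocks in $(G * P_n)_\sigma$ would equal the number of edges between the image pair of blocks in $G * P_n$. In a comb product the only inter-column edges lie in the root layer, so any two distinct blocks of $G * P_n$ span $0$ or $1$ edges; yet the modified graph $(G * P_n)_\sigma$ spans two edges between $B \times \{w\}$ and $B \times \{w'\}$, namely the root edge $(1, w')(1, w)$ and the newly inserted edge $(n, w')(n, w)$, a contradiction. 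Hence $\sigma$ is a skew, and the proof is complete. The main obstacle is precisely the construction of this skew: the replacement must be simultaneously feasible, which I secure by preserving the pendant-length multiset at $(1, w')$, and block-violating, which I detect through the second cross-block edge that no block-respecting permutation can create; by contrast the stem-symmetry of $P_n$, although it needs the suffix-reversal computation above, is comparatively routine.
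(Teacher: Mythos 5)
Your proof is correct and follows the same overall architecture as the paper's: establish that $P_n$ is hang-symmetric at a leaf, invoke \Cref{cor:big_corollary} to pin $\Fer(G*P_n)$ down to $S_n \wr S_{|V(G)|}$ or the full symmetric group, and then exhibit a skew hanging off a leaf of $G$. Your skew is in fact the identical edge-replacement to the paper's (remove the far edge of the pendant column at a leaf of $G$ and reattach its endpoint to the far end of the neighboring column); only your labeling of $P_n$ is reversed. The two points where you genuinely diverge are both local and both sound. First, the paper proves hang-symmetry of $P_n$ by induction, using $P_{n-1}^\dagger = P_n$ together with \Cref{prop:hang-group}, whereas you produce explicit generators of $\fer^1(P_n)$ via the suffix-reversal replacements $r_k$ and check that $r_1 r_2$ and $r_{n-2}$ generate $\Sym(\{2,\dots,n\})$; your version is self-contained and makes the feasible edge-replacements of the path concrete, at the cost of a small case analysis for $n \le 3$, while the paper's induction is shorter given that \Cref{prop:hang-group} is already available. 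Second, the paper verifies that the skew breaks the block system by computing the induced permutation explicitly (it swaps $(m,y)$ with $(m+1,x)$), whereas you argue by counting cross-block edges: a block-respecting permutation cannot create two edges between a pair of blocks when every pair of distinct blocks of $G*P_n$ spans at most one edge. Your counting argument is slightly more robust since it rules out every permutation in the coset $\Fer_{G*P_n}(e\to e')$ at once without computing any of them.
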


\begin{proof}
Let $|V(G)| = k$. If $n = 1$ or $k = 1$, the proof is trivial, so for the rest of the proof, we assume $n > 1$ and $k > 1$. First, we wish to prove by induction that the path is hang-symmetric at any of its leaves. 

For $n = 2$, this is obvious as the automorphism group of $P_2$ is already the full symmetric group. Now suppose $n > 2$. Then $P_{n-1}^\dagger = P_n$ where $P_{n-1}$ is rooted at a leaf labelled $i$ and $P_{n-1}^\dagger = P_n$ is rooted at the newly-added leaf labelled $j$. By \Cref{prop:hang-group}, we have that $\Fer^j(P_n) = \Fer^j(P_{n-1}^\dagger) = \bold{H}_i(P_{n-1}) = S_{n-1}$, and so $P_n$ is stem-symmetric at $j$. However, as $j$ is the label of a leaf, there is an automorphism of $P_n$ that moves $j$, and so $\langle \E_{P_n}^j \cup \Aut(P_n) \rangle = \langle \Fer^j(P_n) \cup \Aut(P_n) \rangle = S_n$ together generate all of $S_n$. Thus, $P_n$ is hang-symmetric at a leaf for every $n \geq 2$.

Now, let $B = \{1, \dots, n\}$ be the label set of $P_n$, where $1$ is the label of a leaf, $2$ is the label of its unique neighbor, and so on. Moreover, let $n$ be the label of the root of $P_n$, and let $X$ be the label set of $G$. By \Cref{cor:big_corollary}, this means that $S_n \wr S_k \leq \Fer(G * P_n)$, the largest group preserving the block system $\{B \times \{x\} \mid x \in X\}$. To show that $G * P_n$ is a local amoeba, we simply need to find some skew, i.e., $\sigma \in \Fer(G * P_n)$ which does not preserve this block system.

Let $x \in X$ be the label of a leaf of $G$ and let $y \in X$ be the label of its unique neighbor. 

Then the following edge-replacement on $G * P_n$, where we replace the dashed edge with the red edge is feasible.

\begin{center}
\begin{tikzpicture}
\node (u1) at (0, 0) {};
\node (u2) at (0, -2) {};
\node (u3) at (0, -4) {};
\node (u4) at (0, -6) {};

\node (u'1) at (2, 0) {};
\node (u'2) at (2, -2) {};
\node (u'3) at (2, -4) {};
\node (u'4) at (2, -6) {};

\draw (u1) -- (u2) -- (u3);
\draw[dashed] (u3) -- (u4);
\draw (u'1) -- (u'2) -- (u'3) -- (u'4);

\draw[line width=0.5mm, color = red!75!black] (u4) -- (u'4);

\draw (5, 0) -- (5, -6);
\draw (6.5, 0) -- (6.5, -6);
\draw (8, 0) -- (8, -6);
\draw (11, 0) -- (11, -6);

\draw (u1) -- (u'1);
\draw (u'1) arc(135:45:2.121);
\draw (u'1) arc(135:45:3.182);
\draw (u'1) arc(135:45:4.243);
\draw (u'1) arc(135:45:5.303);
\draw (u'1) arc(135:45:6.364);

\draw[fill = white] (u1) circle(0.55);
\draw[white, fill = white] (u2) circle(0.55);
\draw[fill = white] (u3) circle(0.55);
\draw[fill = white] (u4) circle(0.55);

\draw[fill = white] (u'1) circle(0.55);
\draw[white, fill = white] (u'2) circle(0.55);
\draw[fill = white] (u'3) circle(0.55);
\draw[fill = white] (u'4) circle(0.55);

\draw[white, fill = white] (5, -2) circle(0.55);
\draw[white, fill = white] (6.5, -2) circle(0.55);
\draw[white, fill = white] (8, -2) circle(0.55);
\draw[white, fill = white] (11, -2) circle(0.55);

\draw[fill = white] (5, -4) circle(0.3);
\draw[fill = white] (6.5, -4) circle(0.3);
\draw[fill = white] (8, -4) circle(0.3);
\draw[fill = white] (11, -4) circle(0.3);

\draw[fill = white] (5, -6) circle(0.3);
\draw[fill = white] (6.5, -6) circle(0.3);
\draw[fill = white] (8, -6) circle(0.3);
\draw[fill = white] (11, -6) circle(0.3);

\draw[fill = white] (8, 0) ellipse (3.5 and 0.6);

\node at (u1) {$(n, x)$};
\node at (u2) {$\vdots$};
\node at (u3) {$(2, x)$};
\node at (u4) {$(1, x)$};

\node at (u'1) {$(n, y)$};
\node at (u'2) {$\vdots$};
\node at (u'3) {$(2, y)$};
\node at (u'4) {$(1, y)$};

\node at (5, -2) {$\vdots$};
\node at (6.5, -2) {$\vdots$};
\node at (8, -2) {$\vdots$};
\node at (11, -2) {$\vdots$};

\node at (9.5,-3) {$\cdots\cdots$};

\node at (2, 1.5) {\textcolor{blue}{$\textbf{G}$}};

\draw[color=blue!70!black, dotted, thick] (-1, -1) -- (12, -1) -- (12, 2.5) -- (-1, 2.5) -- cycle;

\node at (3.5, -3) {$G * P_n$};
\end{tikzpicture}
\end{center}

The edge-replacement depicted above, $(1, x)(2, x) \to (1, x)(1, y)$ is feasible. This edge-replacement induces the permutation $\sigma$ which swaps the labels $(m, y)$ and $(m+1, x)$ for each $1 \leq m \leq n-1$, and fixes all else. This permutation does not preserve the block system, and so it is a skew, as desired. By Corollary \ref{cor:big_corollary}, it follows that $G * P_n$ is a local amoeba.
\end{proof}

Since the comb product is associative, and for a local amoeba $G$ with a leaf, $G * P_n$ is also a local amoeba with a leaf, it follows that for any $n_1, n_2 \dots, n_k \in \mathbb{N}$ that $P_{n_1} * P_{n_2} * \cdots * P_{n_k}$ is a local amoeba, and $G * (P_{n_1} * \cdots * P_{n_k})$ is also a local amoeba. This construction is a strict generalization of the family $\mathcal{A}$ with the local amoeba property as proved in \cite{eslava2023new} since $A_n$ is the $n$-fold iterated comb product of $P_2$. We provide another example of two graphs $G$ and $H$ such that $H$ is \textit{not} the comb product of paths, and $G * H$ is a local amoeba in \Cref{ex:not_paths}.

\begin{theorem}\label{thm-ifH*hs-G*Hglobam}
Let $G$ be a global amoeba and, let $H$ be a rooted graph. If $H^*$ is hang-symmetric at its root, then $G * H$ is a global amoeba.
\end{theorem}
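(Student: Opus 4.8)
The plan is to reduce the claim to showing that the auxiliary graph $G * H^*$ is a \emph{local} amoeba, and then to strip off isolated vertices. The key observation is that $G * H^* \cong (G * H) \cup n K_1$, where $n = |V(G)|$: attaching a copy of the rooted graph $H^*$ to each vertex of $G$ produces, inside each copy, one isolated vertex (the image of the isolated vertex $w$ of $H^*$), and deleting all $n$ of these isolated vertices $\{(w, u) : u \in X\}$ recovers $G * H$. Once we know $G * H^*$ is a local amoeba, the very definition of a global amoeba (with $t = n \ge 1$) immediately gives that $G * H$ is a global amoeba.

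To analyze $\Fer(G * H^*)$, I would first invoke \Cref{lem:big_lemma} with the global amoeba $G$ and the rooted graph $H^*$: since $H^*$ is hang-symmetric at its root $i$ we have $\mathbf{H}_i(H^*) = \Sym(B^*)$, so that $\Sym(B^*) \wr \Fer(G) \le \Fer(G * H^*)$ with block system $\{B^* \times \{x\} : x \in X\}$. Crucially, this embedding realizes the \emph{full} symmetric group on each block $B^* \times \{x\}$, for every $x \in X$, which already gives transitivity within every block. To connect distinct blocks, note that for $u \neq u' \in X$ the transposition $\big((w,u)\,(w,u')\big)$ merely swaps two isolated vertices of $G * H^*$ and is therefore an automorphism, hence lies in $\Fer(G * H^*)$. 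Thus the isolated labels $(w,u)$ form a single orbit, each block meets this orbit, and so $\Fer(G * H^*)$ acts transitively on $B^* \times X$. (The degenerate case $n = 1$ is immediate, as then $G * H^* = H^*$ is a local amoeba by hang-symmetry.)

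Finally I would promote transitivity to the full symmetric group via the transitivity characterization. Because $G * H^*$ contains an isolated vertex, applying \Cref{cor: global amoeba = transitive} to the graph $F := (G * H) \cup (n-1)K_1$, for which $F^* = G * H^*$, the transitivity of $\Fer(F^*)$ established above shows that $F$ is a global amoeba, i.e.\ $G * H^* = F^*$ is a local amoeba. Since $G * H^* = (G * H) \cup n K_1$ with $n \ge 1$, the definition of a global amoeba then yields at once that $G * H$ is a global amoeba. The main obstacle — and the reason the present hypotheses are genuinely weaker than those of \Cref{cor:big_corollary} — is that $G$ is assumed only global, so $\Fer(G)$ need not be transitive and the maximality/skew argument driving the local-amoeba results is unavailable. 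The device that circumvents this is precisely the passage from $H$ to $H^*$: the extra isolated vertices contributed by the copies of $H^*$ supply cross-block transitivity for free through automorphisms, after which the transitivity characterization replaces the need to exhibit the full symmetric group by a skew.
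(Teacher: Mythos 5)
Your argument is correct, and it reaches the conclusion by a genuinely different route from the paper's. The paper works directly on $G*H$: it applies \Cref{lem:big_lemma} to embed $\mathbf{H}_i(H)\wr\Fer(G)$ into $\Fer(G*H)$, uses \Cref{cor:hang-sym-for-global-amoeba} to convert hang-symmetry of $H^*$ into the statement that every orbit of $\mathbf{H}_i(H)$ contains the label of a leaf distinct from $i$, and then concludes that every label of $G*H$ can be carried to the label of a leaf, closing with the characterization of global amoebas from \cite[Theorem 15]{caro2023graphs}. You instead pass to $G*H^*=(G*H)\cup nK_1$, apply \Cref{lem:big_lemma} with $H^*$ in the second slot so that hang-symmetry hands you the full group $\Sym(B^*)$ on every block, obtain cross-block transitivity for free from the automorphisms permuting the $n$ isolated vertices $(w,u)$, and finish with the paper's own \Cref{cor: global amoeba = transitive} applied to $F=(G*H)\cup(n-1)K_1$, for which $F^*=G*H^*$. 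Both proofs rest on the same wreath-product embedding, but yours trades the external leaf characterization and \Cref{cor:hang-sym-for-global-amoeba} for the internally proved transitivity criterion, at the cost of carrying the extra isolated vertices and the bookkeeping identification of $F^*$ with $G*H^*$; the paper's version is shorter, while yours is more self-contained and makes explicit that the isolated vertex of $H^*$ is precisely what supplies transitivity across blocks when $\Fer(G)$ itself need not be transitive. One small remark: your final step can be streamlined, since \Cref{cor: global amoeba = transitive} already tells you $F$ is a global amoeba and hence $G*H$ is one (adding further isolated vertices preserves the property), without needing to pass back through ``$F$ global iff $F\cup K_1$ local,'' though that detour is also valid given the fact quoted before \Cref{lem:local-am with leaf implies local and global}.
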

\begin{proof}
Let $i$ be the label of the root of $H^*$. Then by \Cref{cor:hang-sym-for-global-amoeba}, every orbit of $\bold{H}_i(H)$, acting on its label set, contains a leaf not labeled $i$. By \Cref{lem:big_lemma}, we may embed $\bold{H}_i(H) \wr \Fer(G)$ inside $\Fer(G * H)$. Therefore, if $\sigma \in \bold{H}_i(H)$, then the map $\widetilde{\sigma}: (b, x) \mapsto (\sigma(b), x)$ lives inside $\Fer(G * H)$. For any vertex $(b, x)\in V(G * H)$, we may find some $\sigma \in \bold{H}_i(H)$ such that $\sigma (b)=\ell\neq i$, where $\ell$ is the label of a leaf of $H$. This implies that $(\ell, x)$ is a leaf of $G * H$, and so $\widetilde{\sigma}(b, x) = (\sigma(b), x) = (\ell, x)$. Thus, for any label $k$ in $\Fer(G * H)$, there is some permutation $\varphi \in \Fer(G * H)$ that takes $k$ to the label of a leaf of $G * H$. Therefore, $G * H$ is a global amoeba.
\end{proof}

The following corollary states \Cref{thm-ifH*hs-G*Hglobam} in the stem-symmetric setting.

\begin{corollary}\label{cor:global_comb_product}
Let $G$ be a global amoeba, and let $H$ be a rooted graph. If $H^*$ is stem-symmetric at its root and has a root-similar vertex, then $G * H$ is a global amoeba.
\end{corollary}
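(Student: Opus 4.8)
The plan is to reduce this statement to \Cref{thm-ifH*hs-G*Hglobam} by showing that the two hypotheses on $H^*$ force it to be hang-symmetric at its root; once this is established, the conclusion follows at once. This mirrors the reasoning in the proof of \Cref{cor:also_big_corollary}, but applied to $H^*$ in place of $H$ and invoking \Cref{thm-ifH*hs-G*Hglobam} rather than \Cref{cor:big_corollary}.

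Let $B$ be the label set of $H^*$, put $m = |B|$, and let $i$ be the label of its root. First I would unwind the stem-symmetry hypothesis. Since $H^*$ is stem-symmetric at $v_i$, we have $\Fer^i(H^*) \cong S_{m-1}$. Now $\Fer^i(H^*) = \langle \mathcal{E}^i_{H^*} \rangle$ is a subgroup of the point stabilizer $\on{Stab}_{\Sym(B)}(i) \cong S_{m-1}$; as both are finite of the same order $(m-1)!$, the containment forces equality $\Fer^i(H^*) = \on{Stab}_{\Sym(B)}(i)$. In particular, the full point stabilizer sits inside the hang group $\mathbf{H}_i(H^*) = \langle \mathcal{E}^i_{H^*} \cup \Aut(H^*) \rangle$.

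Next I would use the root-similar vertex to escape this stabilizer. By definition of a root-similar vertex there is an automorphism $\varphi \in \Aut(H^*)$ with $\varphi(i) = j$ for some $j \neq i$, so $\varphi \in \mathbf{H}_i(H^*)$ while $\varphi \notin \on{Stab}_{\Sym(B)}(i)$. Hence $\mathbf{H}_i(H^*)$ strictly contains the point stabilizer. Invoking the classical fact that the natural action of $\Sym(B)$ on $B$ is primitive (indeed $2$-transitive for $m \geq 2$), so that $\on{Stab}_{\Sym(B)}(i)$ is a maximal subgroup of $\Sym(B)$, I would conclude that a proper overgroup of the point stabilizer must be everything, i.e.\ $\mathbf{H}_i(H^*) = \Sym(B)$. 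Thus $H^*$ is hang-symmetric at its root.

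Having shown $H^*$ is hang-symmetric at its root, I would apply \Cref{thm-ifH*hs-G*Hglobam} directly to obtain that $G * H$ is a global amoeba. There is no real obstacle here: the entire content is the group-theoretic observation that a proper overgroup of a maximal point stabilizer is the whole symmetric group, and the only care required is to record the maximality statement and to justify the equality $\Fer^i(H^*) = \on{Stab}_{\Sym(B)}(i)$ via the order comparison, together with a quick check of the degenerate small-$m$ cases, all of which are routine.
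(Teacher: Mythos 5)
Your proposal is correct and takes essentially the same route as the paper: both deduce hang-symmetry of $H^*$ from stem-symmetry plus a root-similar vertex (the paper writes $\mathbf{H}_i(H^*) = \langle \Aut(H^*) \cup \Sym(B \setminus \{i\}) \rangle = \Sym(B)$, which is exactly your maximality-of-the-point-stabilizer argument, spelled out slightly less explicitly) and then invoke \Cref{thm-ifH*hs-G*Hglobam}. No gaps.
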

\begin{proof}
Let $i$ be the label of the root of $H^*$. Because $H^*$ is stem-symmetric at its root, we know that $\Fer^i(H^*) = \Sym(B \setminus \{i\})$ where $B$ is the label set of $H^*$. Since $H^*$ has a root-similar vertex, there is an automorphism that moves the label $i$. Thus, $\bold{H}_i(H^*) = \langle \Aut(H^*) \cup \Sym(B \setminus \{i\}) \rangle  = \Sym(B)$. Thus, $H^*$ is hang-symmetric and the result follows by \Cref{thm-ifH*hs-G*Hglobam}.
\end{proof}

The following corollary states a sufficient condition for the existence of a skew.

\begin{corollary}\label{cor:G loc am and H=P_2*J then G*H loc am}
    Let $G$ be a local amoeba on $n$ vertices with a leaf, and let $H=P_2\ast J$ be stem symmetric at a vertex in $P_2$, where $J$ is some graph. Then $G\ast H$ is a local amoeba. 
\end{corollary}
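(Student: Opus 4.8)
The plan is to reduce, via \Cref{cor:also_big_corollary}, to the production of a single skew, and then to manufacture that skew by re-associating the comb product so that the full symmetric group $\Fer(G * P_2)$ becomes available. First I would observe that $H = P_2 * J$ automatically carries a root-similar vertex. Indeed, $H$ consists of two copies of $J$ joined by a single edge between their roots, and the nontrivial automorphism of $P_2$ induces the swap of these two copies, which is an automorphism of $H$ carrying the root to the other $P_2$-vertex. Since $H$ is stem-symmetric at its root by hypothesis, \Cref{cor:also_big_corollary} applies with $m = |V(H)| = 2\,|V(J)|$ and $n = |V(G)|$, giving $S_m \wr S_n \le \Fer(G * H)$ and that $\Fer(G * H)$ equals either $S_m \wr S_n$ or $S_{mn}$. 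It therefore suffices to exhibit a skew, i.e., an element of $\Fer(G * H)$ violating the block system $\{B \times \{x\} \mid x \in X\}$.

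To produce the skew, I would use associativity of the comb product to rewrite $G * H = G * (P_2 * J) = (G * P_2) * J$ and set $G' := G * P_2$. Since $G$ is a local amoeba with a leaf, \Cref{thm:path-amoeba} shows that $G'$ is a local amoeba, and it has $2n$ vertices, so $\Fer(G') = \Sym(V(G')) = S_{2n}$. Viewing $G * H$ as $G' * J$, the embedding of generating sets described in \Cref{lem:big_lemma} (the translation of \cite[Lemma 3.6]{hansberg2021recursive}) sends $\sigma \mapsto \widetilde{\sigma}$ with $\widetilde{\sigma}(v, w) = (\sigma(v), w)$ for $v \in V(G')$ and $w \in V(J)$; as this map is an injective group homomorphism, it carries $\Fer(G') = \langle \E_{G'} \rangle$ into $\Fer(G' * J) = \Fer(G * H)$. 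Hence the full symmetric group $\widetilde{S_{2n}}$ sits inside $\Fer(G * H)$.

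The key point is how the $G*H$ block system looks in these coordinates. Writing $(x,p)$ and $(x,q)$ for the root and pendant leaf of the $P_2$-copy of $G'$ over a vertex $x$ of $G$, each block $B \times \{x\}$ of $G * H$ is precisely the union of the two $J$-copies sitting over the pair $\{(x,p),(x,q)\}$. Because $\Fer(G') = S_{2n}$ is the \emph{full} symmetric group on $V(G')$, it contains permutations that do not respect this pairing — for example the transposition exchanging $(x,p)$ with $(y,p)$ for distinct $x,y$, which relocates one $J$-copy of the block over $x$ into the block over $y$ while fixing the companion copy. The image of such a permutation under $\sigma \mapsto \widetilde{\sigma}$ lies in $\Fer(G * H)$ yet fails to preserve $\{B \times \{x\} \mid x \in X\}$, so it is the desired skew.

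With a skew in hand, $\Fer(G * H)$ strictly contains the maximal subgroup $S_m \wr S_n$, so by \Cref{Sm_wr_Sn_is_maximal} it equals $S_{mn}$; hence $\Fer(G*H) = \Sym(B \times X)$ and $G * H$ is a local amoeba. I expect the main obstacle to be the construction of the skew itself: the naive attempt to reroute the bridge joining the two $J$-copies over a leaf of $G$ does not even yield a feasible replacement (it changes the degrees of the two relevant roots asymmetrically and so fails a degree-count check). The essential idea is the re-association $G * H = (G * P_2) * J$, which upgrades the ambient (symmetric but pairing-respecting) group to the full symmetric group $\Fer(G * P_2) = S_{2n}$, whose pairing-breaking elements then furnish the skew automatically.
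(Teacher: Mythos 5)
Your proof is correct, but it takes a genuinely different route from the paper's, and the difference matters. The paper also begins by observing that $H = P_2 * J$ carries a root-similar vertex and invoking \Cref{cor:also_big_corollary}; its skew, however, is the explicit edge-replacement $ab \to cb$ with $a = (v_n,u)$, $b = (v_n,u')$, $c = (v_{n-1},u)$, which reroutes the bridge of the $H$-copy over the leaf $v_n$ so that the second $J$-copy hangs from $(v_{n-1},u)$ instead of $(v_n,u)$. That is precisely the ``naive attempt'' you dismiss at the end, and your degree-count objection to it is well founded: the replacement lowers the degree of $(v_n,u)$ by one and raises that of $(v_{n-1},u)$ by one, and already for $G = P_2$ and $J = P_2$ rooted at a leaf (so that $H = P_4$ rooted at its second vertex, which is stem-symmetric there) the resulting graph acquires a vertex of degree $4$ while $\Delta(G*H)=3$, so the replacement is not feasible. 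Your detour through the re-association $G * H = (G * P_2) * J$ sidesteps this: $G * P_2$ is a local amoeba by \Cref{thm:path-amoeba}, hence a global amoeba by \Cref{lem:local-am with leaf implies local and global}, so $\Fer(G * P_2) = S_{2n}$ embeds into $\Fer(G*H)$ via the map $\sigma \mapsto \widetilde{\sigma}$ from \Cref{lem:big_lemma}; the image of the transposition exchanging $(x,p)$ and $(y,p)$ then moves the $J$-copy over $(x,p)$ into the block over $y$ while fixing the companion $J$-copy over $(x,q)$, which is a genuine skew, and maximality of $S_m \wr S_n$ finishes the argument exactly as in the paper. In short, your argument is not merely an alternative: it supplies a working skew where the paper's proposed one fails, at the modest cost of invoking \Cref{thm:path-amoeba} and the associativity of the comb product.
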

\begin{proof}

Let $G$ be a local amoeba on $n$ vertices where $v_n$ is a leaf and $v_{n-1}$ is adjacent to $v_n$ in $G$. By \Cref{cor:also_big_corollary}, $\fer(G*H)$ is either equal to $S_m \wr S_n$ or $S_{mn}$. To prove that $G*H$ is a local amoeba, we use the structure of $H$ to find a skew in $\fer(G*H)$.  Note that $H=P_2\ast J$ admits a decomposition as $H=J \cup J+\{uu'\}$ where $u$ is the root and $u'$ is a root similar vertex.

Let $a$ be the label of $(v_n,u)$ in $G*H$, let $b$ be the label of $(v_n,u')$ in $G*H$. Finally, let $c$ be the label of $(v_{n-1},u)$ in the copy of $H$ hanging from $v_{n-1}$ in $G*H$. The feasible edge-replacement $ab\to cb$ induces the permutation $\rho$ which exchanges one copy of $J$ contained in the copy of $H$ hanging from $v_n$ and one copy of $J$ contained in the copy of $H$ hanging from $v_{n-1}$. Notice that $\rho$ breaks up the block system. Therefore $\fer(G*H)=S_{mn}$, which implies that $G*H$ is a local amoeba.
\end{proof}

A nontrivial example of a family of local amoebas that satisfy \Cref{cor:G loc am and H=P_2*J then G*H loc am} can be constructed if $J$ is a path rooted at a leaf. It is not difficult to see that such a graph provides sufficient structure for a skew in $\Fer(G*H)$. However, no other nontrivial examples of such graphs are known. This is stated as an open problem in \Cref{sec: Conclusion and open problems}.

\section{Stem-/Hang-Symmetry and Comb Products}
\label{sec: Stem-Symmetry Hang-Symmetry under the Comb product}

In this section, we study when stem-symmetry and hang-symmetry are preserved under the comb product. We prove that if $G^*$ and $H^*$ are stem- (resp. hang-) symmetric, then so is $(G * H)^*$.

\begin{lemma}\label{lem:fixed_fer_comb_product}
If $G$ and $H$ are rooted graphs whose roots are $i$ and $j$, respectively, then $\Fer^j(H) \wr \Fer^i(G) \leq \Fer^{(j, i)}(G * H).$ 
\end{lemma}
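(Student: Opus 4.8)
The plan is to realize the left-hand wreath product as a subgroup generated by permutations each coming from a feasible edge-replacement of $G * H$ that fixes the label $(j,i)$. Write $B \times X$ for the label set of $G*H$, with blocks $B \times \{x\}$ indexed by $x \in X$. The structural fact driving everything is that $H$ sits inside $G*H$ as the copy $H_x$ induced on each block $B \times \{x\}$, rooted at the root-copy $(j,x)$, and that the only edges leaving $H_x$ are the lifts of the $G$-edges incident to $v_x$, all of which meet $H_x$ at the single vertex $(j,x)$. Under the imprimitive action, $\Fer^j(H) \wr \Fer^i(G)$ is generated by the top group $\Fer^i(G)$, permuting the blocks according to its action on $X$, together with the base factors, namely one copy of $\Fer^j(H)$ acting inside each block $B \times \{x\}$. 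So it suffices to embed each of these pieces into $\Fer^{(j,i)}(G*H)$ and then observe that they generate the whole wreath product.

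For the top group, I would take $\sigma \in \E^i_G$ arising from a feasible edge-replacement $xy \to wz$ of $G$ and lift it to the cross-block replacement $(j,x)(j,y) \to (j,w)(j,z)$ of $G*H$. By the translation of \cite[Lemma 3.6]{hansberg2021recursive} used in the proof of \Cref{lem:big_lemma}, this is feasible and is associated to the permutation $\widetilde{\sigma}:(b,x')\mapsto(b,\sigma(x'))$. Since $\sigma$ fixes $i$, we have $\widetilde{\sigma}(j,i)=(j,\sigma(i))=(j,i)$, so $\widetilde{\sigma}\in\E^{(j,i)}_{G*H}$; hence the image of $\Fer^i(G)$ lands in $\Fer^{(j,i)}(G*H)$.

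For the base factors, fix $x \in X$ and take $\alpha\in\E^j_H$ from a feasible edge-replacement $rs\to k\ell$ of $H$. I would lift it to the within-block replacement $(r,x)(s,x)\to(k,x)(\ell,x)$, associated to the permutation $\widetilde{\alpha}_x$ that applies $\alpha$ inside the block $B\times\{x\}$ and fixes every other label. The crucial observation is that, because $\alpha$ fixes the root label $j$, the replacement does not disturb the edges joining $(j,x)$ to the neighbouring copies, so the resulting graph is again isomorphic to $G*H$; this is essentially \Cref{lemma:extends} applied to $H_x$. Moreover $\widetilde{\alpha}_x$ fixes $(j,i)$: if $x\neq i$ the block $B\times\{i\}$ is fixed pointwise, and if $x=i$ then $\widetilde{\alpha}_i(j,i)=(\alpha(j),i)=(j,i)$. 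Thus each base factor embeds into $\Fer^{(j,i)}(G*H)$.

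Finally, the base factors generate the base group $(\Fer^j(H))^X$, which the top group $\Fer^i(G)$ normalizes by permuting the blocks; together they generate precisely $\Fer^j(H)\wr\Fer^i(G)$, and since every generator lies in $\Fer^{(j,i)}(G*H)$, the inclusion follows. I expect the main obstacle to be the base-factor step: \Cref{lemma:extends} is stated for a single connecting edge, whereas the root $v_x$ of $G$ may have degree greater than one, so one must verify that an edge-replacement of $H$ fixing its root genuinely extends through a root-copy $(j,x)$ carrying several outgoing edges, and that the resulting permutation fixes $(j,i)$ and not merely lies in $\Fer(G*H)$. A short direct computation of $(G*H)_{\widetilde{\alpha}_x}$, checking that within-block and cross-block edges behave as claimed, settles this cleanly. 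Note also that, unlike in \Cref{lem:big_lemma}, here $G$ need not be a global amoeba, so the device of conjugating a single leaf-block copy across all blocks is unavailable; this is precisely what forces the direct, per-block construction above.
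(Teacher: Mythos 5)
Your proposal is correct and follows essentially the same route as the paper: lift $\E_G^i$ to cross-block replacements on the root copies and lift $\E_H^j$ to within-block replacements in each copy $H_x$, observing that all outgoing edges of $H_x$ meet it at $(j,x)$, which is fixed, and then generate the wreath product. Your extra caution about \Cref{lemma:extends} being stated for a single connecting edge is well placed but resolved exactly as you suggest; the paper handles it with the same structural observation without further comment.
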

\begin{proof}
Let $S = \Fer^j(H)$ and $T = \Fer^i(G)$. Let $X$ and $B$ be the label sets of $G$ and $H$, respectively. Take $\widetilde{S}_x$ and $\widetilde{T}$ to be the same groups as denoted in proof of~\Cref{lem:big_lemma}. By \cite[Lemma 3.5]{hansberg2021recursive}, we may embed $\E_G^i$ into $\E_{G * H}$ by the map $\sigma \mapsto \widetilde{\sigma}$, where $\widetilde{\sigma}(b, x) = (b, \sigma(x))$. Since each member of the image of this embedding fixes $(j, i)$, then this is actually an embedding of $\E_G^i$ into $\E_{G * H}^{(j, i)}$. Therefore, this extends to an embedding of $T$ into $\Fer^{(j, i)}(G * H)$, whose image is $\widetilde{T}$. 

Next, pick $x \in X$. Then there is a copy of $H$ in $G * H$ whose vertices are $B \times \{x\}$, and the only outgoing edges of this copy of $H$ are connected to the vertex $(j, x)$. Therefore, we may embed $\E_H^j$ into $\E_{G * H}$ by the map $\tau \mapsto \widetilde{\tau}$ by $\widetilde{\tau}(b, x) = (\tau(b), x)$ and $\widetilde{\tau}(b, y) = (b, y)$ whenever $y \neq x$. Since each member of the image of this embedding fixes $(j, i)$, then this is actually an embedding of $\E_H^j$ into $\E_{G * H}^{(j, i)}$. Therefore, this embedding extends to an embedding of $S$ into $\Fer^{(j, i)}(G * H)$ whose image is $\widetilde{S}_x$. Since we have embedded $\widetilde{T}$ and each $\widetilde{S}_x$ into $\Fer^{(j, i)}(G * H)$, then we embed all of $\Fer^j(H) \wr \Fer^i(G)$ into $\Fer^{(j, i)}(G * H)$. 
\end{proof}
Note that this inequality is strict for many of the cases under study. If $G$ has a leaf $\ell$ with unique neighbor $\ell'$, and $H$ has a root $j$ and a root-similar vertex $k$, with $\sigma(j) = k$ for $\sigma \in \Aut(H)$, then the edge replacement $(j, \ell')(j, \ell) \to (j, \ell')(k, \ell)$ in $G * H$ moves the vertex $(j,\ell)$ to the vertex $(k,\ell)$. However, $\{j\} \times X$ is stable under the action of $\Fer^j(H) \wr \Fer^i(G)$, and so the inequality must be strict. However, often equality occurs, e.g.,  $\Fer^{(j,i)}(K_n\ast K_m)=\Fer^j(K_m)\wr \Fer^i(K_n)$ for $n,m\geq 3.$ 

\begin{theorem}\label{thm:when_is_comb_product_stem_sym_global}
If $G^*$ is stem-symmetric at its root labeled $i$ and $H^*$ is stem-symmetric at its root labeled $j$ with a root-similar vertex, then $(G * H)^*$ is stem-symmetric at the vertex labeled $(j, i)$. 
\end{theorem}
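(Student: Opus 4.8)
The plan is to reduce the statement to the orbit criterion of \Cref{thm:stem-sym-global} and then analyze the orbits of $\Fer^{(j,i)}(G * H)$ through the wreath-product embedding of \Cref{lem:fixed_fer_comb_product}. Write $X$ and $B$ for the label sets of $G$ and $H$, so that $G * H$ has label set $B \times X$ and root $(j,i)$, and let $z$ be the isolated vertex adjoined to form $(G*H)^*$. Applying the equivalence of (a) and (c) in \Cref{thm:stem-sym-global} to the graph $G * H$ rooted at $(j,i)$ (with added isolated vertex $z$), it suffices to prove that every orbit of $\Fer^{(j,i)}(G*H)$ acting on $(B \times X) \setminus \{(j,i)\}$ contains the label of a leaf of $G * H$. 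I would also record, via the same theorem applied to $G^*$ and to $H^*$, the hypotheses in usable form: (A) every orbit of $\Fer^i(G)$ on $X \setminus \{i\}$ contains a leaf of $G$, and (B) every orbit of $\Fer^j(H)$ on $B \setminus \{j\}$ contains a leaf of $H$.

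Next I would invoke $W := \Fer^j(H) \wr \Fer^i(G) \leq \Fer^{(j,i)}(G*H)$ from \Cref{lem:fixed_fer_comb_product} and describe its orbits. Since the lifted top group acts by $\widetilde{t}(b,x) = (b, t(x))$ and the lifted bottom groups act within blocks on the first coordinate, the $W$-orbit of $(b,x)$ is exactly $O_S(b) \times O_T(x)$, where $O_S(b)$ and $O_T(x)$ denote the $\Fer^j(H)$- and $\Fer^i(G)$-orbits of $b$ and $x$. Because the orbits of $\Fer^{(j,i)}(G*H)$ are unions of $W$-orbits, it suffices to show that each of them contains a leaf. I would record the leaf structure of $G*H$: a vertex $(b,x)$ with $b \neq j$ has degree $\deg_H(b)$, hence is a leaf iff $b$ is a leaf of $H$, whereas $(j,x)$ has degree $\deg_G(x) + \deg_H(j)$. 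For a point $(b,x)$ with $b \neq j$, property (B) supplies a leaf $b^*$ of $H$ inside $O_S(b) \subseteq B \setminus \{j\}$, and then $(b^*, x)$ lies in the same $W$-orbit and is a leaf of $G*H$. Thus every orbit meeting $\{(b,x) : b \neq j\}$ already contains a leaf.

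The crux is the remaining points $(j,x)$ with $x \neq i$, the tops of the $H$-columns sitting over non-root vertices of $G$. If $\deg_H(j) = 0$, then $(j,x)$ is a leaf of $G*H$ exactly when $x$ is a leaf of $G$, and (A) produces such an $x$ in $O_T(x)$, so the $W$-orbit of $(j,x)$ contains a leaf. The hard part is $\deg_H(j) \geq 1$, and this is where the root-similar hypothesis on $H^*$ enters. I would first argue that a root-similar vertex of $H^*$ yields one of $H$: since $\deg_{H^*}(j) = \deg_H(j) \geq 1$, any automorphism of $H^*$ moving $j$ must send it to a non-isolated vertex $k \in V(H)$, and after correcting by a permutation of the isolated vertices one obtains $\sigma \in \Aut(H)$ with $\sigma(j) = k \neq j$. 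Taking a leaf $\ell$ of $G$ in $O_T(x)$ (which exists by (A)) with neighbor $\ell'$ in $G$, the skew edge-replacement $(j,\ell')(j,\ell) \to (j,\ell')(k,\ell)$ described after \Cref{lem:fixed_fer_comb_product} is realized, by relabeling the $\ell$-column via $\sigma^{-1}$, as a feasible edge-replacement whose permutation fixes $(j,i)$ (the $i$-column is untouched, as $\ell \neq i$) and carries $(j,\ell)$ to $(k,\ell)$. Composing with the element of $W$ sending $(j,x)$ to $(j,\ell)$, the orbit of $(j,x)$ under $\Fer^{(j,i)}(G*H)$ contains $(k,\ell)$ with $k \neq j$, and the previous paragraph then furnishes a leaf in this orbit.

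I expect the main obstacle to be this last step: verifying that the skew genuinely lies in $\E^{(j,i)}_{G*H}$ (that its inducing permutation fixes the root $(j,i)$ and acts as claimed on the $\ell$-column), and cleanly extracting $\sigma \in \Aut(H)$ from the root-similar vertex of $H^*$ when $j$ is non-isolated, including the degenerate configurations where $G$ is small or $\ell' = i$. Once every orbit on $(B \times X) \setminus \{(j,i)\}$ is shown to contain a leaf, \Cref{thm:stem-sym-global} yields that $(G*H)^*$ is stem-symmetric at $(j,i)$, completing the argument.
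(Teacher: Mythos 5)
Your proposal is correct and takes essentially the same route as the paper's proof: reduce to the orbit criterion of \Cref{thm:stem-sym-global}, use the wreath-product embedding of \Cref{lem:fixed_fer_comb_product} to send any label $(b,x)$ with $b\neq j$ to a leaf within its column, and for labels $(j,x)$ first move to a column over a leaf of $G$ and then apply the skew edge-replacement furnished by the root-similar vertex before falling back to the first case. Your additional care with the degenerate case $\deg_H(j)=0$ and with extracting a root-similar vertex of $H$ from one of $H^*$ are refinements of details the paper passes over, not a different argument.
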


\begin{proof}
Let $B$ and $X$ be the label sets of $H$ and $G$ respectively. By \Cref{thm:stem-sym-global}, it suffices to show that every orbit of $\Fer^{(j, i)}(G * H)$ except the the singleton orbit $\{(j, i)\}$ contains a leaf. First, let $(b, x) \in B \times X$ such that $b \neq j$. Then there exists some $\sigma \in \Fer^j(H)$ such that $\sigma(b) = \ell$, where $\ell$ is a leaf of $H$ not equal to $j$. Then by \Cref{lem:fixed_fer_comb_product}, the permutation $\widetilde{\sigma}: (a, y) \mapsto (\sigma(a), y)$ on $B \times X$ lies in $\Fer^{(j, i)}(G * H)$. Thus $\widetilde{\sigma}(b, x) = (\sigma(b), x) = (\ell, x)$. Since $\ell$ is a leaf of $H$ not equal to $j$, then $(\ell, x)$ is a leaf of $G * H$. Thus, if $b \neq j$, then $(b, x)$ is in the same orbit as a leaf.

Now, suppose that $b = j$ and $x \neq i$ so that $(b, x) = (j, x)$.  Then there exists some $\tau \in \Fer^i(G)$ such that $\tau(x) = \ell$ a leaf, for $\ell \neq i$. By \Cref{lem:fixed_fer_comb_product}, we have $\widetilde{\tau} \in \Fer^{(j, i)}(G * H)$, where $\widetilde{\tau}: (b, y) \mapsto (b, \tau(y))$. So $\widetilde{\tau}(j, x) = (j, \ell)$. Now if $m$ is the unique neighbor of $\ell$ in $G$, and $k$ is a root similar vertex to $j$ in $H$, then the edge-replacement $(j, \ell)(j, m) \to (k, \ell)(j, m)$ is feasible. This moves the label $(j, \ell)$ to a label $(k', \ell)$ where $k' \neq j$; note that $k'$ may be in the orbit of $k$ under the action of $\Aut(H)$. So now we may apply the methods in the first paragraph to take this label to a leaf. Thus, $(j, x)$ is in the same orbit as a leaf, and we are done.
\end{proof}

This theorem justifies the study of graphs that are stem-symmetric with a root-similar vertex. If $H$ is merely hang-symmetric or merely stem-symmetric, then the above argument does not follow, and $G * H$ is not necessarily stem-symmetric, and indeed we can construct counterexamples in either case. We may also use this theorem to conclude that the property of ``becoming stem-symmetric with a root-similar vertex when an isolated vertex is added'' is closed under the comb product.

\begin{corollary}\label{cor:when_is_comb_product_stem_sym_root_sym_vertex_global}
If $G^*$ is stem-symmetric at its root $i$ with a root-similar vertex and $H^*$ is stem-symmetric at its root $j$ with a root-similar vertex, then $(G * H)^*$ is stem-symmetric at the label $(j, i)$, with a root-similar vertex.
\end{corollary}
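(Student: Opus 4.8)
The plan is to separate the two assertions. Stem-symmetry of $(G*H)^*$ at $(j,i)$ is exactly the conclusion of \Cref{thm:when_is_comb_product_stem_sym_global}, whose hypotheses ($G^*$ stem-symmetric at $i$, and $H^*$ stem-symmetric at $j$ with a root-similar vertex) are all present here; so that half is immediate, and the entire remaining task is to exhibit an automorphism of $(G*H)^*$ that moves the root $(j,i)$, i.e.\ to produce a root-similar vertex.

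For the generic situation, I would first descend the root-similarity of $G^*$ to $G$ itself. Writing $G = G_0 \cup tK_1$ with $G_0$ free of isolated vertices, passing to $G^*$ only enlarges the factor permuting isolated vertices, so $\Aut(G^*) \cong \Aut(G_0)\times S_{t+1}$ while $\Aut(G)\cong \Aut(G_0)\times S_t$. When the root $i$ is non-isolated it lies in $G_0$, and any automorphism of $G^*$ sending $i$ to a different label must act through the $\Aut(G_0)$ factor (the $S_{t+1}$ factor fixes $G_0$ pointwise); restricting it gives $\alpha\in\Aut(G)$ with $\alpha(i)=i'\neq i$. I would then lift $\alpha$ through the comb product using the map $\sigma\mapsto\widetilde\sigma$, $\widetilde\sigma(b,x)=(b,\sigma(x))$, from the proof of \Cref{lem:big_lemma} (following \cite[Lemma 3.6]{hansberg2021recursive}): since $\alpha$ realizes the neutral edge-replacement of $G$, its image $\widetilde\alpha$ realizes the neutral edge-replacement of $G*H$, so $\widetilde\alpha\in\Aut(G*H)$. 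Extending $\widetilde\alpha$ by fixing the adjoined isolated vertex yields an automorphism of $(G*H)^*$ with $\widetilde\alpha(j,i)=(j,\alpha(i))=(j,i')\neq(j,i)$, exhibiting $(j,i')$ as the desired root-similar vertex.

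The one configuration in which this descent fails is when $i$ is the \emph{unique} isolated vertex of $G$ (so $t=1$), for then $\Aut(G)$ cannot move $i$ even though $\Aut(G^*)$ can. Here I would instead invoke the root-similarity of $H^*$: since $i$ is isolated in $G$, the copy of $H$ sitting over $i$ is a connected component $H_i$ of $G*H$, and together with the isolated vertex adjoined in $(G*H)^*$ it forms a subgraph isomorphic to $H^*$ whose root corresponds to $(j,i)$. An automorphism of $H^*$ moving its root, extended by the identity on all other components, then moves $(j,i)$ and supplies the root-similar vertex in this degenerate case.

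I expect the crux to be organizing this case split correctly: the statement bundles two root-similarity hypotheses, and the proof must recognize that the $G^*$-hypothesis drives the generic case while the $H^*$-hypothesis is exactly what rescues the isolated-root case. By contrast, checking that $\widetilde\alpha$ is a genuine comb-product automorphism is routine, since it is the neutral edge-replacement under the embedding already constructed in \Cref{lem:big_lemma}.
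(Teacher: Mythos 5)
Your proof is correct and follows the same overall route as the paper --- stem-symmetry of $(G*H)^*$ at $(j,i)$ is exactly \Cref{thm:when_is_comb_product_stem_sym_global} --- but where the paper disposes of the root-similar vertex with a one-sentence assertion that the property is ``inherited thanks to the comb product,'' you supply the actual argument, and your case split is not superfluous. The generic mechanism is the one you describe: since automorphisms preserve the isolated/non-isolated partition, an automorphism of $G^*$ moving a non-isolated root $i$ restricts to some $\alpha \in \Aut(G)$ with $\alpha(i) \neq i$, and $\widetilde{\alpha}(b,x) = (b,\alpha(x))$ is an automorphism of $G*H$ moving $(j,i)$, which extends by the identity to $(G*H)^*$. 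This genuinely breaks down when $i$ is the unique isolated vertex of $G$ (then $\Aut(G)$ fixes $i$ even though $\Aut(G^*)$ does not, and one cannot lift $\Aut(G^*)$ directly because $G^{*}*H = (G*H)\cup H \neq (G*H)^*$); your rescue --- the copy of $H$ over $i$ together with the adjoined isolated vertex is a union of components of $(G*H)^*$ isomorphic to $H^*$, so a root-moving automorphism of $H^*$ transports over and extends by the identity --- is exactly what is needed, and it is the only place the root-similarity hypothesis on $H^*$ enters this half of the statement. In short, your write-up is a faithful but substantially more complete version of the paper's one-line argument.
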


\Cref{cor:when_is_comb_product_stem_sym_root_sym_vertex_global} is almost a direct consequence of \Cref{thm:when_is_comb_product_stem_sym_global} as the root-similar vertex property is inherited in $(G * H)^*$ thanks to the comb product.
This corollary also serves as a correction to \cite[Prop 3.11]{hansberg2021recursive}, replacing stem-symmetry of $G*H$ with stem-symmetry of $(G*H)^*$. The following is a counterexample to the latter statement which was confirmed by one of the authors via private communication.

\begin{counterexample}
Let $G$ be the path on $3$ vertices, rooted at a leaf, pictured below. The root is highlighted in red.

\begin{center}
\begin{tikzpicture}
\node (c0) at (0, 0) {};
\node (c1) at (1.5, 0) {};
\node (c2) at (3, 0) {};

\draw (c0) -- (c1) -- (c2);

\draw[fill=white!50!red] (c0) circle(0.25);
\draw[fill=white] (c1) circle(0.25);
\draw[fill=white] (c2) circle(0.25);
\end{tikzpicture}
\end{center}

By inspection, $G$ is stem-transitive, has a leaf, and has a root-similar vertex, and so it is a double-rooted global amoeba. Next, we let $H$ be the triangle with a pendant vertex, rooted at a degree $2$ vertex, pictured below with the root highlighted in red.

\begin{center}
\begin{tikzpicture}
    \node (0) at (-0.75, -1.299) {};
    \node (1) at (0.75, -1.299) {};
    \node (2) at (0, 0) {};
    \node (3) at (-0.75, -2.799) {};

    \draw (3) -- (0) -- (1) -- (2) -- (0);

    \draw[fill=white] (0) circle(0.25);
    \draw[fill=white] (1) circle(0.25);
    \draw[fill=white!50!red] (2) circle(0.25);
    \draw[fill=white] (3) circle(0.25);
\end{tikzpicture}
\end{center}

Again, it can be seen by inspection that $H$ is stem-transitive, has a root-similar vertex, and has a leaf, and so $H$ is a double-rooted global amoeba. Now let us consider $G * H$, pictured below with the root highlighted in red.

\begin{center}

\begin{tikzpicture}
    \node (a0) at (-0.75, -1.299) {};
    \node (a1) at (0.75, -1.299) {};
    \node (a2) at (0, 0) {};
    \node (a3) at (-0.75, -2.799) {};

    \draw (a3) -- (a0) -- (a1) -- (a2) -- (a0);

    \node (b0) at (2.25, -1.299) {};
    \node (b1) at (3.75, -1.299) {};
    \node (b2) at (3, 0) {};
    \node (b3) at (2.25, -2.799) {};

    \draw (b3) -- (b0) -- (b1) -- (b2) -- (b0);

    \node (c0) at (5.25, -1.299) {};
    \node (c1) at (6.75, -1.299) {};
    \node (c2) at (6, 0) {};
    \node (c3) at (5.25, -2.799) {};

    \draw (c3) -- (c0) -- (c1) -- (c2) -- (c0);

    \draw (a2) -- (b2) -- (c2);

    \draw[fill=white] (a0) circle(0.25);
    \draw[fill=white] (a1) circle(0.25);
    \draw[fill=white!50!red] (a2) circle(0.25);
    \draw[fill=white] (a3) circle(0.25);

    \draw[fill=white] (b0) circle(0.25);
    \draw[fill=white] (b1) circle(0.25);
    \draw[fill=white] (b2) circle(0.25);
    \draw[fill=white] (b3) circle(0.25);

    \draw[fill=white] (c0) circle(0.25);
    \draw[fill=white] (c1) circle(0.25);
    \draw[fill=white] (c2) circle(0.25);
    \draw[fill=white] (c3) circle(0.25);
\end{tikzpicture}
\end{center}

This graph has a leaf and a root-similar vertex, but it is not stem-transitive, so it is not a double-rooted global amoeba. Let us put down some labels on this graph:

\begin{center}

\begin{tikzpicture}
    \node (a0) at (-0.75, -1.299) {};
    \node (a1) at (0.75, -1.299) {};
    \node (a2) at (0, 0) {};
    \node (a3) at (-0.75, -2.799) {};

    \draw (a3) -- (a0) -- (a1) -- (a2) -- (a0);

    \node (b0) at (2.25, -1.299) {};
    \node (b1) at (3.75, -1.299) {};
    \node (b2) at (3, 0) {};
    \node (b3) at (2.25, -2.799) {};

    \draw (b3) -- (b0) -- (b1) -- (b2) -- (b0);

    \node (c0) at (5.25, -1.299) {};
    \node (c1) at (6.75, -1.299) {};
    \node (c2) at (6, 0) {};
    \node (c3) at (5.25, -2.799) {};

    \draw (c3) -- (c0) -- (c1) -- (c2) -- (c0);

    \draw (a2) -- (b2) -- (c2);

    \draw[fill=white] (a0) circle(0.3);
    \draw[fill=white] (a1) circle(0.3);
    \draw[fill=white!50!red] (a2) circle(0.3);
    \draw[fill=white] (a3) circle(0.3);

    \draw[fill=white] (b0) circle(0.3);
    \draw[fill=white] (b1) circle(0.3);
    \draw[fill=white] (b2) circle(0.3);
    \draw[fill=white] (b3) circle(0.3);

    \draw[fill=white] (c0) circle(0.3);
    \draw[fill=white] (c1) circle(0.3);
    \draw[fill=white] (c2) circle(0.3);
    \draw[fill=white] (c3) circle(0.3);

    \node at (a2) {1};
    \node at (a0) {2};
    \node at (a1) {3};
    \node at (a3) {4};

    \node at (b2) {5};
    \node at (b0) {6};
    \node at (b1) {7};
    \node at (b3) {8};

    \node at (c2) {9};
    \node at (c0) {10};
    \node at (c1) {11};
    \node at (c3) {12};
\end{tikzpicture}
\end{center}

We claim that $\{2, 3, 4\}$ is an orbit of the group $\Gamma = \langle \E^1_{G * H}\rangle$. 

\begin{proposition}
Using the labeling on $G * H$ given above, the set $\{2, 3, 4\}$ is an orbit of the group $\Gamma = \langle \E^1_{G * H} \rangle$
\end{proposition}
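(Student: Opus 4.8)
The plan is to establish the two halves of the claim separately: first that $\Gamma$ acts transitively on $\{2,3,4\}$, and second that the $\Gamma$-orbit of $2$ cannot leave $\{2,3,4\}$. Together these force the orbit of $2$ to be exactly $\{2,3,4\}$.

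For transitivity, I would exhibit two explicit feasible edge-replacements of $G * H$ whose associated permutations fix the root label $1$. The replacement $24 \to 34$, which slides the pendant of the first copy from $v_2$ to $v_3$, is feasible, and one checks directly that $(G * H)_{(2\ 3)} = G * H - 24 + 34$, so the transposition $(2\ 3)$ lies in $\E^1_{G * H}$. The subtle point is the label $4$: any mere relocation of the pendant keeps $v_4$ as the unique leaf and hence fixes $4$, so instead I would use the replacement $13 \to 14$, which rebuilds the triangle of the first copy as $\{1,2,4\}$ with the leaf $v_3$ now hanging from $v_2$. A short verification gives $(G * H)_{(3\ 4)} = G * H - 13 + 14$, so $(3\ 4) \in \E^1_{G * H}$ as well. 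Since $\langle (2\ 3),(3\ 4)\rangle = \Sym(\{2,3,4\})$, the group $\Gamma$ acts transitively on $\{2,3,4\}$, so the orbit of $2$ contains $\{2,3,4\}$.

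For the reverse containment, I would show that every generator $\sigma \in \E^1_{G * H}$ stabilizes the set $\{2,3,4\}$, whence $\Gamma$ does too. The key structural fact is that $v_1$ is a cut vertex of $G * H$: deleting it leaves exactly two connected components, namely $\{2,3,4\}$ of size $3$ and $\{5,\dots,12\}$ of size $8$. If $\sigma$ comes from the feasible replacement $e \to e'$, then $\sigma$ is a graph isomorphism $(G * H)_\sigma = G * H - e + e' \longrightarrow G * H$ with $\sigma(1) = 1$, so it restricts to an isomorphism $(G * H)_\sigma - v_1 \to G * H - v_1$ that carries components to components and preserves their sizes. Because the two component sizes $3$ and $8$ are distinct, the unique size-$3$ component of $(G * H)_\sigma - v_1$ must be mapped onto $\{2,3,4\}$. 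It then remains to see that this size-$3$ component is $\{2,3,4\}$ itself: deleting $v_1$ annihilates whichever of $e, e'$ are incident to $v_1$, and the remaining edge modifications are not incident to $v_1$; an inserted edge crossing between $\{2,3,4\}$ and $\{5,\dots,12\}$ would merge the two components, while a deletion splitting $\{2,3,4\}$ would create a component of size less than $3$, and either outcome violates the $\{3,8\}$ size pattern that feasibility forces. Hence $\{2,3,4\}$ remains the size-$3$ component, and $\sigma(\{2,3,4\}) = \{2,3,4\}$.

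I expect the invariance step to be the main obstacle. Conceptually one must first notice that the leaf label $4$ genuinely moves — it is reached only through the triangle-rebuilding replacement $13 \to 14$ and never by a pendant slide — and then one must prove set-stability of $\{2,3,4\}$ uniformly across all of $\E^1_{G * H}$ rather than by a cumbersome case analysis of feasible replacements. The cut-vertex/component-size argument is what makes this clean: it trades an explicit classification of $\E^1_{G * H}$ for the single observation that any isomorphism fixing the cut vertex $v_1$ must preserve the size of the small side it bounds, which is precisely $\{2,3,4\}$.
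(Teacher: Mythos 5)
Your proof reaches the correct conclusion and takes a genuinely different route from the paper's. The paper obtains the upper bound on the orbit group-theoretically: it invokes \Cref{cor:big_corollary} to conclude that $\Fer(G*H)$ is either $S_{12}$ or $S_4 \wr S_3$, appeals to a computer search to rule out $S_{12}$, and then reads off the orbits of the stabilizer of $1$ in $S_4 \wr S_3$; the lower bound is asserted by saying feasible edge-replacements take $2$ to $3$ and to $4$, without exhibiting them. You replace the computational step with a self-contained combinatorial argument (the cut vertex $v_1$ separates a component of size $3$ from one of size $8$, and any $\sigma \in \E^1_{G*H}$ restricts to an isomorphism of the vertex-deleted graphs), and you make the lower bound explicit via the replacements $24 \to 34$ and $13 \to 14$, which do yield $(2\;3)$ and $(3\;4)$ in $\E^1_{G*H}$. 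The gain in self-containedness is real.

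However, the invariance step has one uncovered case. You dispose of (i) an inserted edge crossing between $\{2,3,4\}$ and $\{5,\dots,12\}$, which merges the two components, and (ii) a deletion splitting $\{2,3,4\}$, which creates a component of size less than $3$. You do not address the combined scenario in which the deleted edge is a cut edge of the size-$8$ side, splitting $\{5,\dots,12\}$ into pieces of sizes $k$ and $8-k$, while the inserted crossing edge attaches $\{2,3,4\}$ to the piece of size $k$. This yields components of sizes $3+k$ and $8-k$, which reproduces the required $\{3,8\}$ profile exactly when $k=5$ --- and in that event the size-$3$ component of the modified graph would lie inside $\{5,\dots,12\}$, so $\sigma$ would not stabilize $\{2,3,4\}$. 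The argument survives only because the cut edges of $\{5,\dots,12\}$ (the two pendant edges and the bridge joining the second and third copies of $H$) produce splits of sizes $\{1,7\}$ and $\{4,4\}$, never $\{3,5\}$; this one-line check must be added before the component-size argument closes.
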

\begin{proof}
Note that $G$ is a local amoeba with a leaf, and $H$ is stem-symmetric at its root with a root-similar vertex. Then, $\Fer(G * H)$ is either equal to $S_{12}$ or $S_4 \wr S_3$. Using the code given in \cite{marcos2024Github}, we found that $G * H$ is not a local amoeba, which implies that $\Fer(G * H)$ is equal to $S_4 \wr S_3$. Now, the stabilizer of $1$ in $\Fer(G * H) = S_4 \wr S_3$ is $\Sym(\{2, 3, 4\}) \times (S_4 \wr S_2)$, which is not transitive on $\{2, 3, \dots, 12\}$. In particular, it has two orbits, which are $\{2, 3, 4\}$ and $\{5, 6, \dots, 12\}$. Thus, the largest possible orbit of $\Gamma$ that contains the label $2$ is $\{2, 3, 4\}$. However, there are sequences of feasible edge-replacements taking $2$ to $3$ and $4$, so $\{2, 3, 4\}$ is an orbit of $\Gamma$. 
\end{proof}

Since $\Gamma$ does not act transitively, then $G * H$ is not stem-transitive, and so $G * H$ is not a double-rooted global amoeba. Thus, we have found graphs $G$ and $H$ such that $G$ and $H$ are double-rooted global amoebas, but $G * H$ is not. 

\end{counterexample}

Now let us focus our attention on hang-symmetry. The same analysis as in \Cref{lem:big_lemma} may be performed with the hang group to obtain the following result that is neither more nor less general than what is proved in \cite{hansberg2021recursive}.

\begin{lemma}\label{lem:big_lemma_but_for_hang_group}
If $G$ is a graph with root labeled $i$ such that $G^*$ is hang-symmetric, and $H$ is a graph with root labeled $j$, then $\bold{H}_j(H) \wr \bold{H}_i(G) \leq \bold{H}_{(j, i)}(G * H)$.
\end{lemma}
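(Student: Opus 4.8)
The plan is to follow the template of \Cref{lem:big_lemma}, replacing $\Fer(G)$ by $\bold{H}_i(G)$ and keeping track of the distinguished label $(j,i)$. Write $S = \bold{H}_j(H)$ and $T = \bold{H}_i(G)$, and let $S \wr T$ act on $B \times X$ imprimitively with blocks $B \times \{x\}$. Exactly as in the proof of \Cref{lem:big_lemma}, for $\sigma \in T$ set $\widetilde{\sigma}(b,x) = (b, \sigma(x))$ with image $\widetilde{T}$, and for each $x \in X$ let $\widetilde{S}_x$ be the copy of $S$ acting within the block $B \times \{x\}$ and fixing every other block. Since $\widetilde{T}$ together with the subgroups $\{\widetilde{S}_x\}_{x \in X}$ generate all of $S \wr T$, it suffices to show that each of these subgroups lies in $\bold{H}_{(j,i)}(G * H)$.

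First I would handle the top group $\widetilde{T}$, whose generators come from $\mathcal{E}^i_G \cup \Aut(G)$. For $\sigma \in \mathcal{E}^i_G$, the embedding $\mathcal{E}^i_G \hookrightarrow \mathcal{E}^{(j,i)}_{G * H}$, $\sigma \mapsto \widetilde{\sigma}$, of \Cref{lem:fixed_fer_comb_product} places $\widetilde{\sigma}$ in $\mathcal{E}^{(j,i)}_{G * H} \subseteq \bold{H}_{(j,i)}(G * H)$. For $\sigma \in \Aut(G)$, one checks directly that $\widetilde{\sigma}$ is an automorphism of $G * H$: it carries each $G$-type edge $(j,x)(j,x')$ to $(j,\sigma(x))(j,\sigma(x'))$ and each $H$-type edge $(b,x)(b',x)$ to $(b,\sigma(x))(b',\sigma(x))$, both of which are edges precisely when the originals are. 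Hence $\widetilde{\sigma} \in \Aut(G * H) \subseteq \bold{H}_{(j,i)}(G * H)$, and so $\widetilde{T} \leq \bold{H}_{(j,i)}(G * H)$.

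Next I would treat the base groups, first for a leaf $\ell \neq i$ of $G$, say with unique neighbor $z$. The copy $H_\ell$ of $H$ on the block $B \times \{\ell\}$ has the single outgoing edge $(j,\ell)(j,z)$, so the subgraph induced on $B \times \{\ell\} \cup \{(j,z)\}$ is $H_\ell^\dagger$ with new leaf $(j,z)$. By \Cref{prop:hang-group}, $S = \bold{H}_j(H) \cong \bold{H}_{(j,\ell)}(H_\ell) = \Fer^{(j,z)}(H_\ell^\dagger)$, and by \Cref{lemma:extends} every generator of this last group, extended by the identity on the remaining blocks, is a feasible edge-replacement of $G * H$. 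Because these extensions act only within block $\ell$ and $\ell \neq i$, they fix $(j,i)$ and therefore lie in $\mathcal{E}^{(j,i)}_{G * H}$; consequently $\widetilde{S}_\ell \leq \Fer^{(j,i)}(G * H) \leq \bold{H}_{(j,i)}(G * H)$. To pass to an arbitrary block $x$, I would invoke the hypothesis that $G^*$ is hang-symmetric: by \Cref{cor:hang-sym-for-global-amoeba} every orbit of $\bold{H}_i(G)$ on the labels of $G$ contains a leaf $\neq i$, so there is $\tau \in T$ with $\tau(x) = \ell$ for such a leaf. Then $\widetilde{\tau} \in \widetilde{T} \leq \bold{H}_{(j,i)}(G * H)$ and $\widetilde{S}_x = \widetilde{\tau}^{-1} \widetilde{S}_\ell \widetilde{\tau} \leq \bold{H}_{(j,i)}(G * H)$, which completes the argument.

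The step I expect to require the most care is the identification $\bold{H}_j(H) \cong \Fer^{(j,z)}(H_\ell^\dagger)$ and the verification that its image survives inside $\bold{H}_{(j,i)}(G * H)$. The difficulty is the automorphism part of the hang group: an automorphism of $H$ that moves its root cannot be lifted directly as an automorphism of $G * H$, since doing so would detach the pendant edge $(j,\ell)(j,z)$. Instead such an element must be realized as the feasible edge-replacement of $G * H$ that slides this pendant edge from $(j,\ell)(j,z)$ to $\tau(j,\ell)(j,z)$; its feasibility amounts to re-rooting the copy of $H$ in block $\ell$ at $\tau(j,\ell)$, which yields an isomorphic graph precisely because $\tau \in \Aut(H)$. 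This is exactly the content packaged by \Cref{prop:hang-group}, and invoking it cleanly is what makes the base-group inclusion go through.
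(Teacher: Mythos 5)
Your proof is correct and follows essentially the same route as the paper's: lift $\mathcal{E}^i_G \cup \Aut(G)$ to the top group via \Cref{lem:fixed_fer_comb_product} and the observation that $\Aut(G)$ extends to automorphisms of $G*H$, realize $\mathbf{H}_j(H)$ on a block corresponding to a leaf $\ell \neq i$ through the $H_\ell^\dagger$ identification of \Cref{prop:hang-group} together with \Cref{lemma:extends}, and then conjugate to all remaining blocks using the hang-symmetry of $G^*$ via \Cref{cor:hang-sym-for-global-amoeba}. The subtlety you single out --- that the $\Aut(H)$ part of the hang group must be realized as the feasible edge-replacement sliding the pendant edge to an automorphic image of the root rather than as a lifted automorphism --- is precisely how the paper handles it (by reusing the argument of \Cref{lem:big_lemma}).
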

\begin{proof}
This proof follows similarly to \Cref{lem:big_lemma} except for a few key differences. By \Cref{lem:fixed_fer_comb_product} we have $\{1\} \wr \Fer^i(G) \leq \Fer^{(j, i)}(G * H)$. Note that
$\{1\} \wr \Aut(G) \leq \Aut(G * H)$.
Thus $\{1\} \wr \bold{H}_{i}(G) \leq \bold{H}_{(j, i)}(G * H)$. 

Now, we use the same techniques as \Cref{lem:big_lemma} to embed $\Aut(H)$ into $\E_{G * H}$ acting on each copy of $H$ inside $G * H$ corresponding to a leaf of $G$ whose label is not $i$. This fixes the label $(j, i)$, so we have actually embedded $\Aut(H)$ into $\E^{(j,i)}_{G * H}$, for each leaf of $G$ not equal to $i$. By \Cref{lem:fixed_fer_comb_product}, we already have $\Fer^j(H) \wr \{1\} \leq \Fer^{(j, i)}(G * H)$, and so there is a copy of $\Fer^j(H)$ acting on each copy of $H$. In particular, we have a copy of $\Fer^j(H)$ acting on each copy of $H$ corresponding to a leaf of $G$ whose label is not $i$. Therefore, we have a copy of $\bold{H}_j(H)$ acting on each such copy of $H$.

Now, since $G^*$ is hang-symmetric, then by \Cref{cor:hang-sym-for-global-amoeba}, every orbit of $\bold{H}_i(G)$ contains a leaf of $G$ whose label is not $i$. Since $\{1\} \wr \bold{H}_i(G) \leq \bold{H}_{(j, i)}(G * H)$, and we have a copy of $\bold{H}_j(H)$ acting on each copy of $H$ inside $G * H$ which corresponds to a leaf whose label is \textit{not} $i$, then by the same argument in \Cref{lem:big_lemma}, we have the full wreath product $\bold{H}_j(H) \wr \bold{H}_i(G)$ inside $\bold{H}_{(j, i)}(G * H)$.
\end{proof}

We would like for this inequality to be strict, but often it is not. Similar to~\Cref{lem:fixed_fer_comb_product}, the case of $P_n\ast K_m$ yields $\bold{H}_{(j, i)}(P_n * K_m)  = \bold{H}_j(K_m) \wr \bold{H}_i(P_n) = S_m \wr S_n < S_{mn}$, for $n,m\geq 3$. 

\begin{corollary}\label{cor:(G*H)* hang sym} If $G^*$ is hang-symmetric with root $i$, and $H^*$ is hang-symmetric with root labeled $j$, then $(G * H)^*$ is hang-symmetric with root labeled $(j, i)$.
\end{corollary}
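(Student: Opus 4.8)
The plan is to reduce the statement to the orbit criterion of \Cref{cor:hang-sym-for-global-amoeba} and then transport leaves across the comb product using the wreath embedding of \Cref{lem:big_lemma_but_for_hang_group}. Write $B$ and $X$ for the label sets of $H$ and $G$, so $G*H$ carries the label set $B\times X$ with root $(j,i)$. By \Cref{cor:hang-sym-for-global-amoeba} it suffices to show that every orbit of $\bold{H}_{(j,i)}(G*H)$ acting on $B\times X$ contains a leaf of $G*H$ whose label is not $(j,i)$. I would first restate both hypotheses in this same language: $G^*$ hang-symmetric means every orbit of $\bold{H}_i(G)$ meets a leaf of $G$ other than $i$, and $H^*$ hang-symmetric means every orbit of $\bold{H}_j(H)$ meets a leaf of $H$ other than $j$.

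The engine is \Cref{lem:big_lemma_but_for_hang_group}, whose hypothesis ``$G^*$ is hang-symmetric'' is exactly one of our assumptions; it yields $\bold{H}_j(H)\wr\bold{H}_i(G)\le\bold{H}_{(j,i)}(G*H)$ with block system $\{B\times\{x\}\mid x\in X\}$. The key consequence I would extract is that the base group embeds in full: for every $x\in X$ there is a copy of $\bold{H}_j(H)$ inside $\bold{H}_{(j,i)}(G*H)$ acting on the block $B\times\{x\}$ by $(b,x)\mapsto(\rho(b),x)$ while fixing all other blocks pointwise. Given an arbitrary $(b,x)$, I would then invoke $H^*$ hang-symmetry (via \Cref{cor:hang-sym-for-global-amoeba} applied to $H$) to choose $\rho\in\bold{H}_j(H)$ sending $b$ to a leaf $\ell'$ of $H$ with $\ell'\ne j$; the associated base-group element moves $(b,x)$ to $(\ell',x)$. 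Because $\ell'\ne j$ is a leaf of $H$, the only neighbor of $(\ell',x)$ lies in the same copy of $H$, so $(\ell',x)$ is a genuine degree-one vertex of $G*H$ with label $\ne(j,i)$. Since $(b,x)$ was arbitrary, every orbit meets such a leaf, and \Cref{cor:hang-sym-for-global-amoeba} concludes that $(G*H)^*$ is hang-symmetric at $(j,i)$.

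The step I expect to be the main obstacle — and the one deserving real care — is justifying that the base group is available over \emph{every} block, not merely over the blocks sitting above leaves of $G$. The embedding in \Cref{lem:big_lemma_but_for_hang_group} produces a copy of $\bold{H}_j(H)$ over a leaf-block directly and then populates the remaining blocks by conjugating with the top group $\{1\}\wr\bold{H}_i(G)$; this conjugation reaches $B\times\{x\}$ precisely because $G^*$ hang-symmetry forces the $\bold{H}_i(G)$-orbit of $x$ to contain a leaf of $G$. Thus both hypotheses genuinely enter the argument: $G^*$ hang-symmetry powers the lemma (hence the full base group), while $H^*$ hang-symmetry supplies the leaf inside each fiber. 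I would also flag the mild degenerate bookkeeping — e.g.\ when a relevant root is isolated — where ``leaf'' must be read as in the proof of \Cref{cor:hang-sym-for-global-amoeba}; since the vertices $(\ell',x)$ produced above are leaves in the literal degree-one sense, this subtlety never bites.

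Finally, I would note but avoid an alternative route through \Cref{prop:hang-group}, which would convert hang-symmetry of $(G*H)^*$ into stem-symmetry of $((G*H)^*)^\dagger$ and attempt to apply \Cref{thm:when_is_comb_product_stem_sym_global}. That theorem demands a root-similar vertex for the second factor — a hypothesis that mere hang-symmetry of $H^*$ does not guarantee — so the direct orbit argument above is the cleaner and correct path.
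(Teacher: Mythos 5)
Your proof is correct and follows essentially the same route as the paper: reduce to the orbit/leaf criterion of \Cref{cor:hang-sym-for-global-amoeba}, use \Cref{lem:big_lemma_but_for_hang_group} to place a copy of $\bold{H}_j(H)$ over the relevant block, and move $b$ to a leaf $\ell \neq j$ so that $(\ell,x)$ is a leaf of $G*H$. Your added commentary on why the base group sits over every block (conjugation by the top group, powered by hang-symmetry of $G^*$) is a correct gloss on the lemma's proof rather than a new ingredient, and the whole argument matches the paper's.
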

\begin{proof}
By \Cref{cor:hang-sym-for-global-amoeba}, it suffices to show that every label $(b, x)$ of $G * H$ can be taken to a leaf of $G * H$. Since $H^*$ is hang-symmetric, then there exists some $\tau \in \bold{H}_j(H)$ such that $\tau(b) = \ell$, where $\ell$ is the label of a leaf not equal to $j$. Therefore, by \Cref{lem:big_lemma_but_for_hang_group}, there exists some $\widetilde{\tau} \in \bold{H}_{(j, i)}(G * H)$ such that $\widetilde{\tau}(a, y) = (\tau(a), y)$. Therefore, $\widetilde{\tau}(b, x) = (\tau(b), x) = (\ell, x)$. Since $\ell$ is the label of a leaf and $\ell \neq j$, then $(\ell, x)$ is the label of a leaf of $G * H$. Therefore, we may take every label $(b, x)$ of $G * H$ to the label of a leaf of $G * H$  via an element of $\bold{H}_{(j, i)}(G * H)$, and so $(G * H)^*$ is hang-symmetric.
\end{proof}
Using the fact that $S_m \wr S_n$ is a maximal subgroup of $S_{mn}$, we immediately deduce the following corollary.

\begin{corollary}\label{cor:hangsym under comb}
If $G$ has a leaf and is  hang-symmetric at a vertex labeled $i$ and $H$ is hang-symmetric at a vertex labeled $j$, then $\bold{H}_{(j, i)}(G * H)$ is either equal to $S_m \wr S_n$ or $S_{mn}$.
\end{corollary}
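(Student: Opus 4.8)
The plan is to sandwich $\mathbf{H}_{(j,i)}(G * H)$ between the wreath product $S_m \wr S_n$ and the full symmetric group $S_{mn} = \Sym(B \times X)$, where $B$ and $X$ are the label sets of $H$ and $G$ with $|B| = m$ and $|X| = n$, and then invoke the maximality of $S_m \wr S_n$ recorded in \Cref{Sm_wr_Sn_is_maximal}. Indeed, once we establish $S_m \wr S_n \leq \mathbf{H}_{(j,i)}(G * H) \leq S_{mn}$ (the upper containment being automatic, since $\mathbf{H}_{(j,i)}(G * H)$ is a subgroup of the symmetric group on the $mn$ labels), maximality forces $\mathbf{H}_{(j,i)}(G * H)$ to coincide with one of the two endpoints, which is exactly the claimed dichotomy.

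To obtain the lower containment I would appeal to \Cref{lem:big_lemma_but_for_hang_group}, which yields $\mathbf{H}_j(H) \wr \mathbf{H}_i(G) \leq \mathbf{H}_{(j,i)}(G * H)$ as soon as $G^*$ is hang-symmetric. Since $G$ and $H$ are hang-symmetric by hypothesis, we have $\mathbf{H}_i(G) = \Sym(X) = S_n$ and $\mathbf{H}_j(H) = \Sym(B) = S_m$, so the embedded wreath product is precisely $S_m \wr S_n$. Thus the only gap between the hypotheses of this corollary and those of \Cref{lem:big_lemma_but_for_hang_group} is the passage from \emph{$G$ is hang-symmetric with a leaf} to \emph{$G^*$ is hang-symmetric}; I expect this to be the main (if modest) obstacle.

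To close that gap I would use \Cref{cor:hang-sym-for-global-amoeba}, by which $G^*$ is hang-symmetric at $i$ if and only if every orbit of $\mathbf{H}_i(G)$ on the labels of $G$ contains a leaf whose label is not $i$. As $\mathbf{H}_i(G) = \Sym(X)$ acts transitively, there is a single orbit, namely all of $X$, so it suffices to produce one leaf of $G$ labeled differently from $i$. The key point is that $v_i$ cannot be the only leaf: if it were, then $v_i$ would be the unique degree-one vertex, hence fixed by every (degree-preserving) automorphism in $\Aut(G)$, while every element of $\mathcal{E}_G^i$ fixes $i$ by definition; together these force $\mathbf{H}_i(G) \leq \stab_{\Sym(X)}(i) \subsetneq \Sym(X)$ for $n \geq 2$, contradicting hang-symmetry. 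Since $G$ has a leaf by hypothesis, some leaf must carry a label other than $i$, which verifies the orbit condition and hence that $G^*$ is hang-symmetric.

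Assembling the pieces then gives $S_m \wr S_n \leq \mathbf{H}_{(j,i)}(G * H)$ via \Cref{lem:big_lemma_but_for_hang_group}, and \Cref{Sm_wr_Sn_is_maximal} completes the dichotomy in the nondegenerate range $m, n \geq 2$. Here $n \geq 2$ is automatic because a graph with a leaf has at least two vertices, while the remaining case $m = 1$ is trivial: then $H$ is a single vertex, $G * H \cong G$, and $S_m \wr S_n = S_{mn}$, so the conclusion holds outright. Everything apart from the degree/automorphism argument ruling out $v_i$ as the sole leaf is a direct invocation of the cited lemmas.
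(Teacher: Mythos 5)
Your proposal is correct and follows essentially the same route as the paper: both arguments turn on the observation that hang-symmetry of $G$ at $i$ forces an automorphism moving the label $i$, hence (since automorphisms preserve degree) a leaf labeled other than $i$, which upgrades the hypotheses so that the wreath-product embedding of \Cref{lem:big_lemma_but_for_hang_group} applies, after which maximality of $S_m \wr S_n$ gives the dichotomy. Your explicit detour through \Cref{cor:hang-sym-for-global-amoeba} and your treatment of the degenerate cases $m=1$ and $n\ge 2$ are slightly more careful than the paper's ``follows as in \Cref{thm:path-amoeba}'' phrasing, but the underlying idea is identical.
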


\begin{proof}
This follows similarly to the proof of \Cref{thm:path-amoeba}, provided we choose a leaf of $G$ whose label is not $i$. However, one is guaranteed to exist, because if a leaf of $G$ is labeled $i$, then there must exist $\sigma \in \Aut(G)$ that moves this label, otherwise $G$ is not hang-symmetric. Since $\Aut(G)$ acts by automorphisms, then $\sigma(i) \neq i$ must be the label of a leaf, which is not equal to our original leaf. 

Choosing a leaf of $G$ that does not have the label $i$ guarantees that the edge-replacement described in \Cref{thm:path-amoeba} fixes the label $i$, and so the permutation $\tau$ induced by this edge-replacement lies in $\E_{G* H}^{(j, i)} \leq \bold{H}_{(j, i)}(G * H)$. From here, the proof follows as in \Cref{thm:path-amoeba}.
\end{proof}
In particular, when $H$ is hang symmetric at every root in the case of a path $P_n$. 
\begin{corollary}\label{cor:hang-sym-path}
If $G$ has a leaf and is hang-symmetric at a vertex labeled $i$, and $P_n$ is the path on $n$ vertices, rooted at a leaf labeled $j$ then $G * P_n$ is hang-symmetric at the vertex labeled $(j, i)$. 
\end{corollary}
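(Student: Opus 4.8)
The plan is to combine the dichotomy of \Cref{cor:hangsym under comb} with a concrete skew taken from the proof of \Cref{thm:path-amoeba}. First I would note that the hypotheses ``$G$ has a leaf'' and ``$P_n$ is rooted at a leaf'' force $|V(G)| \ge 2$ and $n \ge 2$, so no degenerate cases arise. The induction carried out inside the proof of \Cref{thm:path-amoeba} shows that $P_n$ is hang-symmetric at the leaf labeled $j$; hence, with $G$ hang-symmetric at $i$ and possessing a leaf, the hypotheses of \Cref{cor:hangsym under comb} are satisfied. Writing $k = |V(G)|$ and letting $B$, $X$ be the label sets of $P_n$ and $G$, this yields that $\bold{H}_{(j,i)}(G * P_n)$ is either $S_n \wr S_k$, the largest subgroup of $\Sym(B \times X)$ preserving the block system $\{B \times \{x\} \mid x \in X\}$, or the full symmetric group $S_{nk}$. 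It therefore suffices to exhibit a single element of $\bold{H}_{(j,i)}(G * P_n)$ that breaks this block system, since maximality of $S_n \wr S_k$ in $S_{nk}$ then forces $\bold{H}_{(j,i)}(G * P_n) = S_{nk}$, i.e., hang-symmetry at $(j,i)$.

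For the skew I would reuse the edge-replacement from \Cref{thm:path-amoeba}, whose feasibility is purely structural and does not rely on $G$ being a local amoeba. Choose a leaf $x$ of $G$ with label $x \neq i$, and let $y$ be the label of its unique neighbor. Such a leaf exists: $G$ has a leaf, and were it labeled $i$, hang-symmetry would produce some $\sigma \in \Aut(G)$ with $\sigma(i) \neq i$, whereupon $\sigma(i)$ labels a distinct leaf (exactly the argument used in \Cref{cor:hangsym under comb}). Labeling $P_n$ as in \Cref{thm:path-amoeba}, with $1$ the non-root leaf, $2$ its neighbor, and so on up to the root $n = j$, the feasible edge-replacement $(1, x)(2, x) \to (1, x)(1, y)$ induces the permutation that swaps $(m, y)$ with $(m+1, x)$ for each $1 \le m \le n-1$ and fixes all other labels. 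This permutation does not preserve the block system, so it is a skew.

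The step demanding the most care, and the only place the choice $x \neq i$ is used, is checking that this skew fixes the root $(j, i) = (n, i)$ so that it actually lies in $\E_{G * P_n}^{(j,i)} \le \bold{H}_{(j,i)}(G * P_n)$. The induced permutation touches only columns $x$ and $y$, so if $i \notin \{x, y\}$ then column $i$ is fixed pointwise and $(n, i)$ is fixed. If $i = y$, then $(n, i) = (n, y)$, which is fixed because the transpositions involve $(m, y)$ only for $m \le n - 1$; and the case $i = x$ is excluded by our choice. In every case $(j, i)$ is fixed, so the skew belongs to $\E_{G * P_n}^{(j,i)}$ and hence to $\bold{H}_{(j,i)}(G * P_n)$. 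As it breaks the block system, $\bold{H}_{(j,i)}(G * P_n)$ strictly contains $S_n \wr S_k$, and by the dichotomy above it equals $S_{nk}$, giving that $G * P_n$ is hang-symmetric at $(j,i)$.
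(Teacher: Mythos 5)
Your proof is correct and takes essentially the same route as the paper: the corollary is obtained there directly from \Cref{cor:hangsym under comb}, whose proof already combines the wreath-product dichotomy with the skew of \Cref{thm:path-amoeba}, adapted so that it fixes $(j,i)$ by choosing a leaf of $G$ whose label is not $i$. Your explicit check that the induced permutation fixes $(n,i)$ even in the case $i=y$ is a detail the paper leaves implicit, but the argument is the same.
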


Using \Cref{cor:hang-sym-path}, we may further recover stem-symmetry of the family $\mathcal{B}$ shown in \cite{eslava2023new}. 
\begin{example}
In the prior reference, it was shown that $B_n$ is stem-symmetric at its highest-degree vertex, but we may show that it is stem-symmetric at a leaf connected to its highest degree vertex. First, we note that $B_n = (P_2 * P_2 * \cdots * P_2)^\dagger$, where we take the $n$-fold comb product and add a leaf. Since $P_2$ is hang-symmetric with a leaf, we may repeatedly apply \Cref{cor:hang-sym-path} to get that $P_2 * \cdots * P_2$ is hang-symmetric at a highest-degree vertex. It then follows by \Cref{prop:hang-group} that $B_n = (P_2 * \cdots * P_2)^\dagger$ is \textit{stem}-symmetric at the newly-added leaf. 
\end{example}
The major observation that allowed us to recover stem-symmetry of the family $\mathcal{B}$ is the decomposition of elements in the $\mathcal{A}$ family as iterated comb products of paths $P_2.$ 
As noted above, this decomposition can also be exploited to recover local amoeba property. Stem-symmetry at the maximal deegree vertex as shown in \cite{eslava2023new} can also be recovered via this decomposition, albeit with slightly more work.

We can also use this technique to find two graphs $G$ and $H$ such that $G * H$ is a local amoeba, and $H$ is not the comb product of paths.

\begin{example}\label{ex:not_paths}
We show that $P_2 * B_n$ is a local amoeba for $n \geq 2$. Define $A_n = P_2 * P_2 * \cdots * P_2$, then $n$-fold comb product of $P_2$. Then $A_n$ has a leaf connected to its root, and since $A_n^\dagger = B_n$ is rooted at the newly added leaf, then the root of $B_n$ is connected to a vertex, which is in turn connected to another leaf. Therefore, $B_n$ is double-rooted. Since we know that $B_n$ is stem-symmetric by the previous example, then $B_n$ is hang-symmetric. By \Cref{cor:big_corollary}, we have that $\Fer(P_2 * B_n)$ is either equal to $S_{2(2^n+1)}$ or $S_{2^n+1} \wr S_2$, so to show that $P_2 * B_n$ is a local amoeba, it suffices to find a skew.

Note that $B_n = (A_{n-2} * (P_2 * P_2))^\dagger$. Since $P_2 * P_2$ is $P_4$ rooted at a non-leaf, then there is a copy of $P_4$ hanging from a non-leaf at each vertex of $A_{n-2}$. When we apply the dagger operation, we add a leaf to the root of $A_{n-2}$, so there is a copy of $(P_2 * P_2)^\dagger$ hanging from the root of $A_{n-2}$. Since $B_n = (A_n)^\dagger$, then $B_n$ is rooted at the newly-added leaf. So omitting some detail, the structure of $P_2 * B_n$ can be said to look like this:

\begin{center}
\begin{tikzpicture}
    \node (1) at (0, 0) {};
    \node (2) at (0, -1) {};
    \node (3) at (0, -2) {};
    \node (4) at (1, -1) {};
    \node (5) at (1, -2) {};

    \draw (2) -- (-3, 0.5);
    \draw (2) -- (-3, 0);
    \draw (2) -- (-3, -0.5);
    \draw (2) -- (-3, -1);
    \draw (2) -- (-3, -1.5);
    \draw (2) -- (-3, -2);
    \draw (2) -- (-3, -2.5);

    \node (1') at (4, 0) {};
    \node (2') at (4, -1) {};
    \node (3') at (4, -2) {};
    \node (4') at (3, -1) {};
    \node (5') at (3, -2) {};

    \draw (2') -- (7, 0.5);
    \draw (2') -- (7, 0);
    \draw (2') -- (7, -0.5);
    \draw (2') -- (7, -1);
    \draw (2') -- (7, -1.5);
    \draw (2') -- (7, -2);
    \draw (2') -- (7, -2.5);

    \draw (1) -- (2) -- (4) -- (5);
    \draw (2) -- (3);

    \draw (1') -- (2') -- (4') -- (5');
    \draw (2') -- (3');

    \draw (1) -- (1');

    \draw[fill=white] (1) circle(0.2);
    \draw[fill=white] (2) circle(0.2);
    \draw[fill=white] (3) circle(0.2);
    \draw[fill=white] (4) circle(0.2);
    \draw[fill=white] (5) circle(0.2);

    \draw[fill=white] (1') circle(0.2);
    \draw[fill=white] (2') circle(0.2);
    \draw[fill=white] (3') circle(0.2);
    \draw[fill=white] (4') circle(0.2);
    \draw[fill=white] (5') circle(0.2);

    \draw[fill=white] (-3, -1) circle(1.5);

    \draw[fill=white] (7, -1) circle(1.5);
\end{tikzpicture}
\end{center}

The two blocks under the action of $S_{2^n+1} \wr S_2$ are the ``left'' and ''right'' side of this picture. To show that $P_2 * B_n$ is a local amoeba, we need to find some edge replacement that mixes these two blocks. However, we may perform the following edge replacement, where we remove the dashed edge and replace it with the red edge:

\begin{center}
\begin{tikzpicture}
    \node (1) at (0, 0) {};
    \node (2) at (0, -1) {};
    \node (3) at (0, -2) {};
    \node (4) at (1, -1) {};
    \node (5) at (1, -2) {};

    \draw (2) -- (-3, 0.5);
    \draw (2) -- (-3, 0);
    \draw (2) -- (-3, -0.5);
    \draw (2) -- (-3, -1);
    \draw (2) -- (-3, -1.5);
    \draw (2) -- (-3, -2);
    \draw (2) -- (-3, -2.5);

    \node (1') at (4, 0) {};
    \node (2') at (4, -1) {};
    \node (3') at (4, -2) {};
    \node (4') at (3, -1) {};
    \node (5') at (3, -2) {};

    \draw (2') -- (7, 0.5);
    \draw (2') -- (7, 0);
    \draw (2') -- (7, -0.5);
    \draw (2') -- (7, -1);
    \draw (2') -- (7, -1.5);
    \draw (2') -- (7, -2);
    \draw (2') -- (7, -2.5);

    \draw (1) -- (2) -- (4) -- (5);
    \draw (2) -- (3);

    \draw (2') -- (4') -- (5');
    \draw (2') -- (3');
    \draw[dashed] (1') -- (2');

    \draw (1) -- (1');

    \draw[line width=0.5mm, color = red!75!black] (2') arc(45:175:1.75);

    \draw[fill=white] (1) circle(0.2);
    \draw[fill=white] (2) circle(0.2);
    \draw[fill=white] (3) circle(0.2);
    \draw[fill=white] (4) circle(0.2);
    \draw[fill=white] (5) circle(0.2);

    \draw[fill=white] (1') circle(0.2);
    \draw[fill=white] (2') circle(0.2);
    \draw[fill=white] (3') circle(0.2);
    \draw[fill=white] (4') circle(0.2);
    \draw[fill=white] (5') circle(0.2);

    \draw[fill=white] (-3, -1) circle(1.5);

    \draw[fill=white] (7, -1) circle(1.5);
\end{tikzpicture}
\end{center}
This is a feasible edge-replacement that mixes the two blocks of the $S_{2^n+1} \wr S_2$ action. Therefore, $P_2 * B_n$ is a local amoeba.
\end{example}

\section{Conclusion and open problems}
\label{sec: Conclusion and open problems}

    Our results provide a foundation for systematic exploration of stem- and hang-symmetry, interpolation, and group actions in the context of labeled graph families.  By introducing the hang group invariant, we capture how local amoebas embed into larger ones and provided a mechanism for encoding the permutations generated by feasible edge-replacements. Our analysis of stem- and hang-symmetry establishes a series of necessary and sufficient conditions that characterize when a rooted graph becomes a local or global amoeba. These conditions, together with the operations of adding leaves and isolated vertices, give constructive methods for building new amoeba families. Furthermore, our use of the wreath product to define the comb product of amoebas strengthens and generalizes previously known constructions. Beyond these contributions, many questions remain open; some examples are included below. This indicates that the study of amoeba graphs remains a rich and fertile area of research.
\begin{itemize}

    \item \emph{If $H=P_2 * J$ is a graph that is stem-symmetric at a vertex in $P_2$ and $J$ is a nonempty graph, are there nontrivial examples where $J$ is not a path rooted at a leaf?}
    
    \item \emph{Which additional graph operations (beyond adding leaves or isolated vertices) preserve the amoeba property?} 
    
    \item \emph{Given a rooted local amoeba, what graph-theoretic conditions are sufficient to guarantee hang-symmetry at the root?}
    
    \item \emph{Which groups are realizable as $\Fer(G)$ groups for some graph $G$?} Note that~\Cref{cor: global amoeba = transitive} tells us that if $G$ contains an isolated vertex, the only realizable transitive $\Fer(G)$ group is the symmetric group.
    
    \item \emph{Is it possible to get amoeba constructions via different maximal subgroups of the symmetric group?} In particular, we would be interested in amoeba constructions that utilize primitive groups. Note that these are classified by the O'Nan--Scott Theorem. Known constructions of local/global amoebas so far leverage the following embeddings. The wreath product $S_m\wr S_n\hookrightarrow S_{mn}$ was exploited in~\cite{hansberg2021recursive} for the global amoeba property of comb product; in this paper, the construction is further generalized to address the local amoeba property along with  hang- and stem-symmetry. The  $S_n\hookrightarrow S_{n+1}$ embedding is used to construct local amoebas from stem-symmetric graphs. The $S_n\times S_m\hookrightarrow S_{n+m}$ embedding is used in~\cite[Lemma 5, Theorem 8]{eslava2023new}. The local amoeba property of Fibonacci tree and paths are excellent examples of this.   
    
    \item \emph{What can be said about the orbits of $\Fer$ groups of global amoebas?}  Global amoebas, in general, have $\Fer$ groups that are not transitive. Are there any arithmetic restrictions that arise? Moreover, for a graph $G$, the action of $\Aut(G)$ on its vertices tends to induce orbits that are a union of disconnected vertices, whereas the action of $\Fer(G)$ can induce orbits that have larger connected components. Are there any topological restrictions on these orbits? 
\end{itemize}

\subsection*{Acknowledgments}
Many of the computations supporting the results were done through useful code provided in~\cite{laffitte2023detection, marcos2024Github} and \cite{GAP4}. All authors would like to thank the Algebra and Discrete Mathematics seminar at University of California Davis where this collaboration began. The fourth author is grateful for the financial support received from the UC Davis Chancellor’s Postdoctoral Fellowship Program, United States of America.
\pagebreak 

\section*{Appendix}~\label{appendix: wreath}
\vspace{-20pt}
\subsection*{Wreath Product}
We mostly cite Chapters 1.5 and 2.6 in \cite{DM96} for the information contained in this section.

Let $\Gamma$ be a group acting transitively on a set $X$. Moreover, let $\Delta \subseteq X$ such that for any $\sigma \in \Gamma$, we have either $\sigma(\Delta) = \Delta$ or $\sigma(\Delta) \cap \Delta = \varnothing$. Then $\Delta$ is called a \emph{block}. The collection of sets $\{\sigma(\Delta) \mid \sigma \in \Gamma\}$, which partitions $X$, is called a \emph{block system}.

\begin{example}
Let $X$ be the vertices of the cube graph, and let $\Gamma$ be its automorphism group. Any set of two vertices with no common neighbors form a block. Geometrically, this is because any two such points are ``antipodal'' on the cube, and an automorphism of the cube must take each set of antipodal points to another set of antipodal points. Therefore, the collection of all pairs of vertices that are distance three apart forms a block system. See \Cref{fig:block_system_cube_example}.
\end{example}

\begin{figure}[h]
    \centering
    \begin{tikzpicture}[scale=0.75]
\node at (0, 0, 0) (000) {};
\node at (3, 0, 0) (100) {};
\node at (0, 3, 0) (010) {};
\node at (3, 3, 0) (110) {};
\node at (0, 0, 3) (001) {};
\node at (3, 0, 3) (101) {};
\node at (0, 3, 3) (011) {};
\node at (3, 3, 3) (111) {};

\draw (001) -- (011);
\draw (001) -- (101);
\draw (001) -- (000);

\draw (010) -- (110);
\draw (010) -- (011);
\draw (010) -- (000);

\draw (100) -- (101);
\draw (100) -- (110);
\draw (100) -- (000);

\draw[line width=2mm, color=white] (111) -- (110);
\draw[line width=2mm, color=white] (111) -- (101);
\draw[line width=2mm, color=white] (111) -- (011);

\draw (111) -- (110);
\draw (111) -- (101);
\draw (111) -- (011);

\draw[fill=white!50!red] (000) circle(0.25);
\draw[fill=white!50!blue] (100) circle(0.25);
\draw[fill=white!50!green] (010) circle(0.25);
\draw[fill=white!50!orange] (110) circle(0.25);
\draw[fill=white!50!orange] (001) circle(0.25);
\draw[fill=white!50!green] (101) circle(0.25);
\draw[fill=white!50!blue] (011) circle(0.25);
\draw[fill=white!50!red] (111) circle(0.25);

\end{tikzpicture}
    \caption{Any two vertices that have the same color form a block. The collection of pairs of same-colored vertices forms a block system. Any automorphism of the cube must move a same-colored pair to a same-colored pair.}
    \label{fig:block_system_cube_example}
\end{figure}
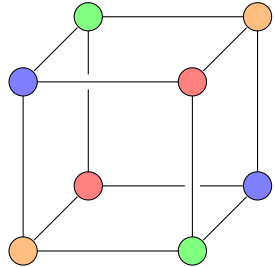

Note that if a group $\Gamma$ acts transitively on a set $X$, then any singleton $\{x\} \subseteq X$ is always a block, and the entire subset $X$ is always a block. However, we may find examples of permutation groups for which these are the only two blocks. Such groups are called \emph{primitive}. Likewise, a permutation group which has a nontrivial block is called \emph{imprimitive}. Of particular interest to our study of amoebas is the fact that the symmetric group $S_n$ itself is primitive, which distinguishes it from any group with a nontrivial block system.

Let us turn our attention now to imprimitive groups, and in particular, a construction of a type of group that often acts imprimitively. Let $B$ and $X$ be (finite) sets, and let $S \leq \Sym(B)$ and $T \leq \Sym(X)$ be permutation groups. Note that for every $x_0 \in X$, the group $S$ acts on $B \times X$ by $\sigma(b, x_0) = (\sigma(b), x_0)$, and $\sigma(b, x) = (b, x)$ when $x \neq x_0$, for $\sigma \in S$. Denote the image of this group action in $\Sym(B \times X)$ by $\widetilde{S}_{x_0}$. So $\widetilde{S}_{x_0}$ acts nontrivially on the block $B \times \{x_0\}$, and trivially on $B \times \{x\}$, for $x \neq x_0$. Moreover, the group $T$ acts on $B \times X$ by $\tau(b, x) = (b, \tau(x))$ for any $\tau \in T$. Denote the image of this group action in $\Sym(B \times X)$ by $\widetilde{T}$. 

Using the terminology from the previous paragraph, we define the \emph{wreath product} of $S$ and $T$ as the subgroup of $S \wr T \leq \Sym(B \times X)$ generated by $\widetilde{T}$ and $\widetilde{S}_x$ for every $x \in X$.

If $\tau(x) = y$ for some $\tau \in T$, then for its image $\widetilde{\tau} \in \widetilde{T}$, we have that $\widetilde{S}_y = \widetilde{\tau}^{-1}\widetilde{S}_x\widetilde{\tau}$. This means that $\widetilde{T}$ takes the subgroups $\widetilde{S}_x$ to each other by conjugation. Moreover, each $\widetilde{S}_x$ preserves the block system $\{B \times \{x\} \mid x \in X\}$, as does $\widetilde{T}$, so it follows that the wreath product $S \wr T$ does as well.

\bibliographystyle{plain}
\bibliography{bibliography}

\end{document}